\documentclass[a4paper,11pt]{amsart}
\usepackage{wrapfig}
\usepackage{ragged2e}
\usepackage{graphicx} 
\usepackage{amsmath}
\usepackage{amssymb}
\usepackage{amsthm}
\usepackage{hyperref}
\usepackage{enumerate}
\usepackage{enumitem}
\usepackage[normalem]{ulem}
\usepackage{mathdots}
\usepackage{tikz}
\usepackage{tikz-cd}
\usepackage[]{mdframed}
\usepackage[dvipsnames]{xcolor}
\usepackage{eqnarray}
\usepackage{xy}
\input xy
\xyoption{all}
\usepackage{multirow}
\usepackage{leftindex}
\usepackage{array}
\usepackage{float}
\usepackage{booktabs}
\usepackage{caption}
\usepackage{geometry}
\numberwithin{equation}{section}
\numberwithin{equation}{subsection}
\newtheorem{theorem}{Theorem}[section]
\newtheorem{proposition}[theorem]{Proposition}

\newtheorem{corollary}[theorem]{Corollary}
\newtheorem{lemma}[theorem]{Lemma}

\newtheorem*{remark*}{Remark}
\newtheorem{definition}[theorem]{Definition}
\newtheorem{remark}[theorem]{Remark}

\newtheorem{example}[theorem]{Example}

\newcommand{\C}{\mathbb{C}}
\newcommand{\F}{\mathbb{F}}

\newcommand{\Z}{\mathbb{Z}}

\newcommand{\dg}{\delta(G)}
\newcommand{\di}{\delta_{irr}(G)}
\newcommand{\tg}{\tau(G)}
\newcommand{\ti}{\tau_{irr}(G)}
\newcommand{\irr}{\mathrm{Irr}}
\newcommand{\Hom}{\mathrm{Hom}}
\newcommand{\PGL}{\mathrm{PGL}}
\newcommand{\bchi}{\Bar{\chi}}
\newcommand{\Ho}{{\mathrm{H}}}
\newcommand{\x}{\Bar{x}}
\newcommand{\y}{\Bar{y}}
\newcommand{\z}{\Bar{z}}
\newcommand{\tra}{\operatorname{tra}}

\newcommand{\GL}{\mathrm{GL}}

\newcommand{\Nayak}[1]{{\color{red} #1}}
\newcommand{\poonam}[1]{{\color{blue} #1}}
\usepackage[utf8]{inputenc}

\title[On the Faithful Projective Representations of Finite Groups]
{On the Faithful Projective Representations of Finite Groups and their Minimal Dimension}

\author{Sumana Hatui} \author{Poonam Nayak}
\address{School of Mathematical Sciences,  National Institute of Science Education and Research Bhubaneswar, 752050, Odisha, India.}
\address{Homi Bhabha National Institute, Training School Complex, Anushakti Nagar, Mumbai, 400094, India.
}
\email{sumanahatui@niser.ac.in}
\email{poonam.nayak@niser.ac.in, rashmitanayak2021@gmail.com}

\keywords{Faithful projective representations, Projective embedding degree, Representation groups, Second cohomology groups}

\subjclass[2020]{20C25, 20D15, 20J06}

\begin{document}
	
	\begin{abstract}
		The first part of this article is devoted to characterizing the cocycles $\alpha$ of a finite group $G$ that give rise to faithful projective representations of $G$. 
		We prove that a $p$-group $G$ admits a faithful irreducible projective representation if and only if the cohomology class $[\alpha]$ does not lie in the image of the inflation map
		$\inf: \mathrm{H}^2\!\left(G / N, \mathbb{C}^{\times}\right) \longrightarrow \mathrm{H}^2\!\left(G, \mathbb{C}^{\times}\right)$
		for any non-trivial central subgroup $N$ of $G$.
		In the case where $[\alpha] \in \operatorname{Im}(\operatorname{inf})$, we determine a criterion such that a direct sum of irreducible $\alpha$-representations is faithful. We conclude this part by describing the behaviour of cocycles $\alpha$ that yield faithful irreducible representations for direct products of groups.
		
		In the second part, we introduce the notion of the projective embedding degree of a finite group $G$, defined as the smallest integer $n$ such that $G$ embeds into $\PGL_n(\C)$; equivalently, it is the smallest $n$ such that $G$ has a faithful complex projective representation of degree $n$. We also define the analogous notion of the irreducible projective embedding degree of $G$. These invariants have been investigated for several classes of groups, including direct products of groups, finite abelian groups, extra-special $p$-groups, Heisenberg groups, and groups of order $p^3$, $p^4$ (for primes $p$), and $p^5$ (for $p \geq 5$). 
		
		
		
	\end{abstract}
	
	\maketitle
	
	
	\section{Introduction}
	A complex projective representation of a finite group $G$ is a homomorphism $\rho \colon G \to \PGL(V)$, where $V$ is a finite-dimensional complex vector space. Equivalently, we say a map $\rho: G \to GL(V)$ is a projective representation of $G$ if there is a map $\alpha \colon G \times G \to \mathbb{C}^\times$ such that $$\rho(g)\rho(h)= \alpha(g,h)\rho(gh), g, h \in G \text{ and } \rho(1)=1_V.$$
	Then $\alpha$ becomes a cocycle of $G$, and we refer $\rho$ as an $\alpha$-representation.
	Projective representations are a generalization of ordinary representations. The equivalence classes of projective representations of $G$ are classified by the elements of the second
	cohomology group $\mathrm{H}^2(G,\mathbb{C}^{\times})$, known as the Schur multiplier of $G$. It is also denoted by $M(G)$.
	
	A fundamental and long-standing problem in the representation theory of finite groups $G$ is to determine the smallest positive integer $n$ for which $G$ embeds into $\mathrm{GL}_n(\mathbb{C})$.
	To address this problem, the embedding degree $\delta(G)$ and irreducible embedding degree $\delta_{irr}(G)$ of a group $G$ were introduced, which are defined as follows. The embedding degree (or the representation dimension), $\delta(G)$, is the minimal dimension of a faithful complex ordinary representation of $G$, and the irreducible embedding degree $\delta_{irr}(G)$ is the minimal dimension of a faithful irreducible complex ordinary representation of $G$. 
	The representation dimension has been studied extensively for finite $p$-groups (see \cite{bardestani, cernele, kaur}) as well as for finite groups of Lie type (\cite{Lusztig, Lübeck}).
	While questions of faithfulness, irreducibility, and minimal dimension for ordinary representations of finite groups have been extensively studied in \cite{ Martino, moreto, Prajapati, SZECHTMAN}, analogous questions for projective representations have received comparatively little attention.
	Existing work on projective representations has
	primarily focused on faithfulness and irreducibility \cite{Bekka, frucht, HIGGS, Morris}, with further results obtained for specific families of groups, such as metabelian groups \cite{NG}, and finite classical groups in \cite{LANDAZURI, Zalesskii}. 
	
	These earlier studies naturally lead to the problem of determining the smallest positive integer $n$ for which a finite group $G$ embeds into $\mathrm{PGL}_n(\mathbb{C})$; equivalently, of deciding when $G$ admits a faithful (possibly irreducible) projective representation, and what the minimal possible dimension of such a representation is. 
	Let $Z^2(G,\mathbb C^\times)$ denote the set of cocycles of $G$ over $\mathbb C^\times$.
	Recall that if $N \lhd G$, the inflation map is defined as follows. For a given $\alpha \in Z^2(G/N, \mathbb{C}^{\times})$, define $\alpha' : G \times G \to \mathbb{C}^{\times}$ by $\alpha'(x, y) = \alpha(xN, yN)$. Then the assignment $[\alpha] \mapsto [\alpha']$ induces a homomorphism $\text{inf} : \text{H}^2(G/N, \mathbb{C}^{\times}) \to \text{H}^2(G, \mathbb{C}^{\times})$, called the inflation map. 
	
	Our first result appears in  Theorem \ref{cohomological characterization}, where we show that a finite $p$-group $G$ admits a faithful irreducible $\alpha$-representation if and only if the cohomology class
	$[\alpha]$ does not lie in the image of any inflation homomorphism
	\[
	\mathrm{inf} \colon
	\mathrm{H}^2\!\left(G/N, \mathbb{C}^{\times}\right)
	\longrightarrow
	\mathrm{H}^2\!\left(G, \mathbb{C}^{\times}\right),
	\]
	where $N$ ranges over all non-trivial central subgroups of $G$. As a corollary, we obtain corresponding results for non-capable groups (see Corollary \ref{non-capable groups}). Next, in Theorem \ref{direct_sum}, we determine a criterion when a direct sum of irreducible $\alpha$-representations gives rise to a faithful $\alpha$-representation for $[\alpha] \in \operatorname{Im}(\operatorname{inf})$. Furthermore, for direct products of groups $G$ with $Z(G) \subseteq [G, G]$, we provide an explicit description of the cocycles $\alpha$ that give rise to faithful irreducible $\alpha$-representations in Theorem \ref{directproduct}. 
	
	Since every finite nilpotent group is isomorphic to the direct product of its Sylow $p$-subgroups, it follows from Theorem~\ref{schur multiplier of direct product} and Theorem~\ref{tensor product is faithful} that the study of faithful irreducible projective representations of finite nilpotent groups reduces to the corresponding problem for $p$-groups. Therefore, we restrict our attention to $p$-groups.

	For an $\alpha$-representation  $\rho : G \to \mathrm{GL}(V)$ of $G$, we define the kernel of $\rho$ by $\ker \rho=\{ g \in G \mid \rho(g) = \lambda I_V$ for some $\lambda\in \C^{\times}$\}. If $\ker \rho = 1$, then we say that $\rho$ is a faithful projective representation. Note that every finite group admits a faithful complex projective representation, which follows
	from the fact that the $\alpha$-regular representation of $G$ is always faithful (see \cite[Chapter 7, Section 1]{karpilovsky}).
	Building on this observation, we introduce numerical invariants that capture the smallest possible dimensions of faithful projective representations. We define the \textit{projective embedding degree} $\tau(G)$ of \(G\) to be the smallest positive integer \(n\) for which \(G\) embeds into \(\PGL_n(\mathbb{C})\).
	Equivalently,
	\[
	\tau(G) := \min\{\deg(\rho) : \rho \text{ is a faithful complex  projective representation of } G\},
	\]
	where $\deg(\rho)$ denotes the degree of  $\rho$.
	The \textit{irreducible projective embedding degree} $\ti$ of  $G$ is defined as follows:
	\[
	\ti := \min\{\deg(\rho) : \rho \text{ is a faithful irreducible complex projective representation of } G\}.
	\] 
	We note that \(\tau_{irr}(G)\) need not exist for every group \(G\). However, whenever it exists, one has $\tau(G) \leq \tau_{irr}(G).$
	In Section~\ref{emdedding degree}, we establish sufficient conditions under which two groups \(G_1\) and \(G_2\) satisfy
	$\tau(G_1)= \tau(G_2)$
	(see Theorem~\ref{tau(G_1) = tau(G_2)}). We also study the invariants \(\tau(G)\) and \(\tau_{irr}(G)\) for several important classes of finite groups, including direct products of groups, finite abelian groups, extra-special \(p\)-groups, and Heisenberg groups.
	In Section~\ref{additional}, we present several examples of groups \(G\) and compute \(\tau(G)\) and \(\tau_{irr}(G)\). We also carry out these computations for non-abelian groups of orders \(p^3\) and \(p^4\), where \(p\) is a prime, as well as for groups of order \(p^5\) with \(p \geq 5\); see Section~\ref{groupsoforderp5}. The determination of these invariants is more intricate and requires a number of non-trivial arguments. The proofs make essential use of auxiliary lemmas and techniques developed in Sections~\ref{sec:Preliminaries}, \ref{faithful_projective}, \ref{emdedding degree}, and \ref{groupsoforderp5}.

	\begin{remark}
		One might expect that there is not much significant difference between $\tau(G)$ and $\delta(G)$; however, this is not the case. Our computations show that there is a large class of groups $G$ for which $\tau(G)$ and $\delta(G)$ differ substantially, for example, extra-special $p$-groups, Heisenberg groups, and groups of order $p^5$ in $\Phi_7$. (See Section \ref{EH}, Theorem \ref{p5} and \cite[Theorem 3.1]{kaur}.) 
	\end{remark} 
	
	
	
	
	
	Let us fix some notation here. For $x,y\in G$, the commutator $x^{-1}y^{-1}xy$ is denoted by $[x,y]$. The subgroups $G'$ and $Z(G)$ refer to the commutator subgroup and the centre of $G$, respectively. For $x \in G$, $C_G(x)$ denotes the centralizer of $x$ in $G$. The one-dimensional trivial ordinary representation of $G$ is denoted by $1_G$. We write $cd(G)= \{\deg(\rho) \mid \rho \in \irr(G) \}$, where $\mathrm{Irr}(G)$ denotes the set of all linearly inequivalent irreducible ordinary representations of $G$. If $\psi$ is an ordinary representation of $G$, then we write $K_{\psi}= \{x \in G \mid \psi(x)=I\}$. For $\alpha \in Z^2(G,\mathbb C^\times)$, $\mathrm{Irr}^\alpha(G)$ denotes the set of all irreducible $\alpha$-representations of $G$ up to linear equivalence. Furthermore, we say that a cocycle $\alpha \in Z^{2}(G,\mathbb{C}^{\times})$ is non-trivial if $[\alpha] \neq [1]$.  
	
	Throughout this paper, we restrict our attention to finite groups and complex representations. 
	By the tensor product $G_1 \otimes G_2$ of two groups $G_1$ and $G_2$, we always mean the abelian tensor product, i.e., $G_1/G_1' \otimes_{\mathbb Z} G_2/G_2'$. \\
	
	\begin{mdframed}
		\textbf{Unless stated otherwise, we say $\rho: G \to GL(V)$ is faithful on $G$ if it is faithful as a projective representation of $G$.}
	\end{mdframed}
	
	\section{Preliminaries} \label{sec:Preliminaries}
	In this section, we present several results that will be used in the proofs of subsequent sections. For basic definitions and results on projective representations, we refer to \cite[Chapter 3]{karpilovsky}.
	\begin{definition}[$\alpha$-regular element]
		For a cocycle $\alpha$ of $G$, an element $x \in G$ is called an $\alpha$-regular element of $G$ if $\alpha(g,x) =\alpha(x,g)$ for all $g\in C_G(x)$. Further, we say that $x \in G$ is an $\alpha$-regular element of $Z(G)$ if $x \in Z(G)$ and $x$ is an $\alpha$-regular element of $G$.    
	\end{definition}

	\begin{lemma}[\cite{karpilovsky}, Chapter 3, Lemma 1.1] \label{projectively equivalent to an ordinary representation}
		Suppose $\alpha_i \in Z^2(G,\mathbb C^\times)$, $i=1,2$.
		Let $\rho_1$ be an $\alpha_1$-representation. If $[\alpha_1]= [\alpha_2]$, then there exists an $\alpha_2$-representation $\rho_2$ which is projectively equivalent to $\rho_1$. In particular, if $\alpha_1$ is a coboundary, then $\rho_1$ is projectively equivalent to an ordinary representation.
		
	\end{lemma}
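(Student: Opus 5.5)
The plan is to prove the statement directly from the definition of cohomologous cocycles. Suppose $[\alpha_1]=[\alpha_2]$. Then there is a function $\mu\colon G\to\C^\times$ with
\[
\alpha_2(x,y)=\mu(x)\mu(y)\mu(xy)^{-1}\alpha_1(x,y)\quad\text{for all } x,y\in G.
\]
We may normalise $\mu(1)=1$. This is possible because $\alpha_i(1,1)=1$, which follows from $\rho(1)=1_V$ for projective representations as defined in the introduction. Alternatively, $\mu(1)$ is forced to be $1$ by evaluating at $x=y=1$.

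Given the $\alpha_1$-representation $\rho_1\colon G\to \GL(V)$, define $\rho_2(g)=\mu(g)\rho_1(g)$. The key verification is then a one-line computation:
\[
\rho_2(x)\rho_2(y)=\mu(x)\mu(y)\alpha_1(x,y)\rho_1(xy)=\mu(x)\mu(y)\mu(xy)^{-1}\alpha_1(x,y)\rho_2(xy)=\alpha_2(x,y)\rho_2(xy).
\]
Together with $\rho_2(1)=\mu(1)\rho_1(1)=1_V$, this shows that $\rho_2$ is an $\alpha_2$-representation.

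By construction, $\rho_2(g)$ and $\rho_1(g)$ differ only by the scalar $\mu(g)$ for every $g$. Hence they have the same image in $\PGL(V)$, so $\rho_2$ is projectively equivalent to $\rho_1$; here we take the identity intertwiner in the definition of projective equivalence, namely $\rho_2(g)=\mu(g)T\rho_1(g)T^{-1}$ with $T=1_V$.

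For the \lq\lq in particular\rq\rq{} part, apply the above with $\alpha_2=1$, the trivial cocycle, which is cohomologous to $\alpha_1$ when $\alpha_1$ is a coboundary. A $1$-representation satisfies $\rho_2(x)\rho_2(y)=\rho_2(xy)$, so it is an ordinary representation.

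There is no real obstacle in this argument. The only point needing care is the normalisation $\mu(1)=1$, which is needed so that $\rho_2(1)=1_V$. It is handled by comparing the cocycle identity at $(1,1)$: since $\alpha_1(1,1)=\alpha_2(1,1)=1$, the relation gives $\mu(1)\mu(1)\mu(1)^{-1}=1$, i.e.\ $\mu(1)=1$.
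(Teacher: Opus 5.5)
Your argument is correct and is exactly the standard proof behind the cited result. The paper gives no proof of its own and defers to Karpilovsky, where one likewise sets $\rho_2=\mu\rho_1$ for the $1$-cochain $\mu$ relating the two cocycles. One small remark: $\alpha_2(1,1)=1$ is part of the convention that cocycles are normalized, not a consequence of $\rho_1$. That convention is genuinely needed, since a non-normalized $\alpha_2$ admits no $\alpha_2$-representation at all. Once it is assumed, $\mu(1)=1$ is forced, as you note at the end, rather than being a normalization you are free to choose.
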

	
	The next result follows immediately.
	\begin{lemma} \label{projectively equivalent} 
		Let $\rho_{1}$ and $\rho_{2}$ be projective representations that are projectively equivalent. If $\rho_{1}$ is faithful, then $\rho_{2}$ is also faithful.
	\end{lemma}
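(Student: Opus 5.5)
The plan is to unwind the definition of projective equivalence and see that it leaves the scalar kernel unchanged. Recall that $\rho_1 \colon G \to \GL(V_1)$ and $\rho_2 \colon G \to \GL(V_2)$ are projectively equivalent if there are a linear isomorphism $f \colon V_1 \to V_2$ and a function $\mu \colon G \to \C^{\times}$ such that
\[
\rho_2(g) = \mu(g)\, f \rho_1(g) f^{-1} \quad \text{for all } g \in G.
\]
I will show that $\ker \rho_1 = \ker \rho_2$. Faithfulness of $\rho_2$ then follows at once from $\ker \rho_1 = 1$.

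First, take $g \in \ker \rho_1$, so that $\rho_1(g) = \lambda I_{V_1}$ for some $\lambda \in \C^{\times}$. Then
\[
\rho_2(g) = \mu(g)\, f(\lambda I_{V_1}) f^{-1} = \mu(g)\lambda\, I_{V_2},
\]
and $\mu(g)\lambda \in \C^{\times}$, so $g \in \ker \rho_2$. Conversely, projective equivalence is symmetric, because $\rho_1(g) = \mu(g)^{-1} f^{-1}\rho_2(g) f$. The same computation therefore gives $\ker\rho_2 \subseteq \ker\rho_1$.

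There is no real obstacle here. The only point needing care is that the kernel of a projective representation is defined modulo scalars, as in the introduction, and not as $K_\psi$. Conjugating a scalar matrix by $f$ gives a scalar matrix, and multiplying by the nonzero scalar $\mu(g)$ keeps it scalar. Hence $\ker\rho_2 = \ker\rho_1 = 1$, and $\rho_2$ is faithful.
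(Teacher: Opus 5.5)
Your argument is correct and is exactly the routine check the paper has in mind when it says the lemma ``follows immediately''. Since $\rho_2(g)=\mu(g)f\rho_1(g)f^{-1}$, an element is sent to a scalar matrix by $\rho_1$ if and only if it is by $\rho_2$, so the two kernels coincide; the paper writes out no further proof.
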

	
	
	
	In view of the above results, throughout this paper, we consider projective representations associated with a fixed cocycle representative of each cohomology class. In particular, if $\alpha$ is a coboundary, then we restrict our attention to ordinary representations ($\alpha=1$).
	
	\begin{lemma} \label{one dim direct sum in non-abelian group}
		\begin{enumerate}
			\item If $G$ is a non-abelian group, then the direct sum of one-dimensional ordinary representations can never be a faithful projective representation.
			
			\item A faithful irreducible ordinary representation of $G$ is a faithful irreducible projective representation if and only if $Z(G)=1$.
			
			\item If $Z(G) = 1$, then $\tg \leq \dg $.
			Furthermore, if $Z(G) = 1$ and $M(G)=1$, then $\dg=\tg$  \text{ and }  $\ti = \delta_{irr}(G)$.
		\end{enumerate}
		
	\end{lemma}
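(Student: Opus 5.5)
For all three parts, the key observation is that for an ordinary representation $\rho$ the projective kernel is $\{g : \rho(g) \in \C^\times I\}$. This contains the usual kernel $K_\rho$, and it also contains the preimage under $\rho$ of the scalar matrices.

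For part (1), write $\rho = \bigoplus_i \lambda_i$ with each $\lambda_i$ one-dimensional. Every $\lambda_i$ is trivial on $G'$, so $\rho(g) = I$ for all $g \in G'$. Hence $G' \subseteq K_\rho \subseteq \ker \rho$. Since $G$ is non-abelian, $G' \neq 1$, so $\rho$ is not faithful as a projective representation. (If the paper also allows a direct sum of one-dimensional $\alpha$-representations, we reduce to this case: a one-dimensional $\alpha$-representation forces $\alpha$ to be a coboundary, so by Lemma \ref{projectively equivalent to an ordinary representation} the sum is projectively equivalent to an ordinary sum of linear characters, and we conclude with Lemma \ref{projectively equivalent}.)

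For part (2), let $\rho$ be faithful and irreducible as an ordinary representation.
\begin{itemize}
\item Suppose $Z(G) \neq 1$. By Schur's lemma, $\rho(z)$ is a scalar for every $z \in Z(G)$. So $1 \neq Z(G) \subseteq \ker \rho$, and $\rho$ is not projectively faithful.
\item Suppose $Z(G) = 1$, and let $\rho(g) = \lambda I$. Then $\rho(g)$ commutes with every $\rho(h)$, so $\rho(g^{-1}h^{-1}gh) = I$. Ordinary faithfulness gives $[g,h] = 1$ for all $h$, so $g \in Z(G) = 1$. Hence $\rho$ is projectively faithful.
\end{itemize}
Irreducibility is the same notion in both settings, so this proves the equivalence.

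For part (3), assume $Z(G) = 1$. The argument of part (2) does not use irreducibility: if $\rho$ is ordinarily faithful and $\rho(g)$ is scalar, then $g$ is central, so $g = 1$. Thus every faithful ordinary representation is projectively faithful, which gives $\tg \le \dg$.

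Now assume in addition $M(G) = 1$. Then every cocycle is a coboundary. By Lemma \ref{projectively equivalent to an ordinary representation} and Lemma \ref{projectively equivalent}, any faithful projective representation of degree $n$ may be replaced by a projectively faithful ordinary representation $\rho$ of the same degree, irreducible if the original was. Its ordinary kernel satisfies $K_\rho \subseteq \ker \rho = 1$, so $\rho$ is ordinarily faithful. Hence $\dg \le \tg$, and so $\dg = \tg$.

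The same reduction applied to irreducible representations gives $\delta_{irr}(G) \le \ti$. Combined with part (2), it gives equality, including the statement that one of the two invariants exists exactly when the other does.

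The main point needing care is the step where projective equivalence preserves both faithfulness and irreducibility. Lemma \ref{projectively equivalent} handles faithfulness. Irreducibility is preserved because the equivalence only rescales each $\rho(g)$ by a scalar, which does not change the invariant subspaces.
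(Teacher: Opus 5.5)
Your proposal is correct and follows essentially the paper's argument. Part (1) goes via $G'\subseteq\ker\rho$, and part (3) uses the observation that a scalar $\rho(g)$ forces $\rho([g,h])=I$ for all $h$, hence $g\in Z(G)$. The only differences are minor: for part (2) you prove directly, by Schur's lemma and the same commutator argument, what the paper quotes from Isaacs (Lemma 2.27(f)). For the case $M(G)=1$ you spell out the reduction to ordinary representations via Lemmas \ref{projectively equivalent to an ordinary representation} and \ref{projectively equivalent}, which the paper leaves implicit.
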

	
	\begin{proof}
		(1) Suppose $\rho= \oplus_{k=1}^{n}\rho_{i}$, where $\rho_{i}s$ are one-dimensional ordinary representations of $G$.
		For $x \in G^\prime$, $\rho_{i}(x)=1$ and so
		$\rho(x)= I_{n \times n}$. Hence, $\rho$ cannot be faithful.
		
		(2) Let $\psi$ be a faithful irreducible ordinary representation of $G$. Then, by \cite[Chapter 2, Lemma 2.27, (f)]{isaacs}, $\{g$$|$$\psi(g)=\lambda I$ for some $\lambda\in \mathbb{C}^{\times} \} =Z(G)$.
		
		(3) Let $\psi$ be a faithful ordinary representation of $G$. Suppose $\psi(a)= \lambda I$ for some $\lambda$ $\in$ $\mathbb{C}^{\times}$ and $a \in G$. Then, $\psi(a)\psi(g)= \psi(g) \psi(a)$ $\forall g \in G$, which implies $\psi(a^{-1}g^{-1}ag)=I$. Since $\psi$ is a faithful ordinary representation, $[a,g]=1$ for all $g\in G$. 
		Hence, $a$ $\in$ $Z(G)$ and $\psi$ is a faithful projective representation. Thus, $\tg \leq \dg $. Furthermore, if $Z(G) = 1$ and $M(G)=1$, the result is obtained using $(2)$ and the argument above. 
		
	\end{proof}
	
	\begin{remark} \label{some results on p-groups} Let $G$ be a non-abelian $p$-group.
		The following facts are straightforward using the above result.
		\begin{enumerate}
			\item[(i)] $\tau(G) \geq p$.
			
			\item[(ii)] $\tau(G)=p$ if and only if $\tau_{irr}(G) =p$. In this case, the corresponding faithful representation must be a true projective representation.
			
			\item[(iii)] If $\tau_{irr}(G)$ does not exist or $\tau_{irr}(G)>p$, then $\tau(G) > p$.
		\end{enumerate}
	\end{remark}
	
	
	
	
	Let $N$ be a normal subgroup of $G$ and  $\rho \in \operatorname{Irr}(N)$. The induced representation of $\rho$ from $N$ to $G$ is denoted by $\mathrm{Ind}_N^G(\rho)$. We say $\rho \in \mathrm{Irr}(N)$  extends to $G$ if there exists $\tilde{\rho} \in \mathrm{Irr}(G)$ such that $\tilde{\rho}|_N = \rho$.
	Let $\chi_\rho$ be the character affording $\rho$. For $g \in G$, define conjugate character $\chi_\rho^{(g)}: N \to \C$ by $\chi_\rho^{(g)}(x) = \chi_\rho(gxg^{-1})$ $\forall$ $x \in N$. Then, 
	\[
	I_G(\chi_\rho) = \{ g \in G \mid \chi_\rho^{(g)} = \chi_\rho \}
	\]
	is called the inertia group of $\rho$ in $G$. For any character $\chi$ of $N$, we define  $$\operatorname{Irr}(G \mid \chi)= \{\eta \in \irr(G) \mid \langle \chi_{\eta\mid_N}, \chi \rangle \neq 0\}$$ denote the set of inequivalent irreducible ordinary representations of $G$ lying above $\chi$. 
	Next, we recall a few results of Clifford's theory.
	\begin{theorem}[\cite{isaacs}, Theorem 6.11, Corollary 11.22] Let $N \triangleleft G$ and $\rho \in \mathrm{Irr}(N)$. If $\chi$ is the character affording $\rho$, then the following hold.
		
		\begin{enumerate} \label{Clifford's theory}
			
			\item The map 
			\[
			\theta \mapsto \operatorname{Ind}^{G}_{I_G(\chi)}(\theta)
			\] is a bijection of $\operatorname{Irr}(I_G(\chi) \mid \chi)$ onto $\operatorname{Irr}(G \mid \chi)$.
			
			\item Suppose $G/N$ is cyclic and $I_G(\chi) = G$. Then $\rho$ extends  to $G$.
			
		\end{enumerate}    
	\end{theorem}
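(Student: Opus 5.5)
This statement is quoted from \cite{isaacs}, so the plan is to reproduce the standard arguments for its two parts. Throughout, write $T = I_G(\chi)$ for the inertia group, where $\chi$ is the character afforded by $\rho$.

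\textbf{Part (1).} Take $\theta \in \operatorname{Irr}(T \mid \chi)$.
\begin{enumerate}
\item By Clifford's theorem applied to $N \lhd T$, and since every $T$-conjugate of $\chi$ equals $\chi$, the restriction $\theta|_N$ is $e\chi$ for some $e \geq 1$.
\item For $\operatorname{Ind}_T^G\theta$, Mackey's formula (or the explicit formula for induced characters) gives
\[
(\operatorname{Ind}_T^G\theta)|_N = e\sum_{t} \chi^{(t)},
\]
where $t$ runs over a transversal of $T$ in $G$. The conjugates $\chi^{(t)}$ in this sum are pairwise distinct.
\item To show $\operatorname{Ind}_T^G\theta$ is irreducible, let $\eta \in \operatorname{Irr}(G)$ be a constituent of it. Frobenius reciprocity gives $\langle \eta|_T, \theta\rangle \neq 0$.
\item Let $\psi$ be the sum of the irreducible constituents of $\eta|_T$ lying over $\chi$. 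I will show that $\psi$ is irreducible and equals $\theta$.
\end{enumerate}
The key counting step is to compare $\langle \eta|_N, \chi\rangle$ with $\langle \psi|_N, \chi\rangle$. The only constituents of $\eta|_T$ whose restriction to $N$ involves $\chi$ are those lying over $\chi$. This is because the constituents of $\eta|_T$ lie over $T$-orbits of conjugates of $\chi$, and $\chi$ is alone in its $T$-orbit. Hence
\[
\eta(1) \geq [G:T]\,\psi(1) \geq [G:T]\,\theta(1) = \operatorname{Ind}_T^G\theta(1).
\]
Therefore $\eta = \operatorname{Ind}_T^G\theta$, and the induced character is irreducible.

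For injectivity and surjectivity:
\begin{enumerate}
\item The same argument shows that $\theta$ is the unique constituent of $(\operatorname{Ind}_T^G\theta)|_T$ lying over $\chi$, so $\theta$ can be recovered and the map is injective.
\item Given any $\eta \in \operatorname{Irr}(G \mid \chi)$, pick an irreducible constituent $\theta$ of $\eta|_T$ lying over $\chi$. Frobenius reciprocity makes $\eta$ a constituent of $\operatorname{Ind}_T^G\theta$. Since the latter is irreducible, $\eta = \operatorname{Ind}_T^G\theta$, so the map is surjective.
\end{enumerate}
I expect this bookkeeping to be the main obstacle in part (1). I would try Mackey's formula first, since it makes the restriction to $N$ and the distinctness of the conjugates explicit.

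\textbf{Part (2).} Here $G/N = \langle gN\rangle$ is cyclic of order $m$ and $\chi$ is $G$-invariant.
\begin{enumerate}
\item Invariance means $\rho(g^{-1}xg)$ is equivalent to $\rho(x)$ for $x \in N$. So there is an invertible matrix $A$ with
\[
\rho(g^{-1}xg) = A^{-1}\rho(x)A \quad \text{for all } x \in N.
\]
\item Applying this $m$ times shows that $A^m\rho(g^m)^{-1}$ commutes with $\rho(N)$, using $g^m \in N$.
\item By Schur's lemma, $A^m = c\,\rho(g^m)$ for some scalar $c \neq 0$.
\item Replace $A$ by $c^{-1/m}A$, so that $A^m = \rho(g^m)$.
\item Define $\tilde\rho(g^i x) = A^i\rho(x)$ for $0 \le i < m$ and $x \in N$.
\end{enumerate}
The remaining work is to check that $\tilde\rho$ is a homomorphism. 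This is a direct computation using the two relations $x A = A\,\rho(g^{-1}xg)$ and $A^m = \rho(g^m)$, the latter handling the wrap-around when exponents exceed $m$. By construction $\tilde\rho|_N = \rho$, and $\tilde\rho$ is irreducible because its restriction to $N$ already is.

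The delicate point in part (2) is the rescaling: the scalar normalisation of $A$ must make $A^m = \rho(g^m)$ exactly, not just up to a scalar. This requires taking an $m$-th root in $\mathbb{C}^{\times}$, which is always possible.
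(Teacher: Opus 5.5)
Your proposal is correct. The paper gives no proof of this statement; it only cites Isaacs, so the comparison below is with the cited source.

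Your part (1) is essentially Isaacs's own proof of Theorem 6.11. With $\theta|_N=e\chi$ and $\eta$ a constituent of $\operatorname{Ind}_T^G\theta$:
\begin{enumerate}
\item Clifford's theorem gives $\eta(1)=\langle\eta|_N,\chi\rangle\,[G:T]\,\chi(1)$.
\item The whole $\chi$-part $\psi$ of $\eta|_T$ restricts to $\langle\eta|_N,\chi\rangle\chi$.
\item The squeeze $\eta(1)=[G:T]\psi(1)\ge[G:T]\theta(1)=(\operatorname{Ind}_T^G\theta)(1)\ge\eta(1)$ gives irreducibility and $\psi=\theta$, hence injectivity and surjectivity as you describe.
\end{enumerate}
It is the Clifford decomposition of $\eta|_N$, used only implicitly in your count, that does the work; the Mackey computation in your step 2 is not needed.

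Your part (2) takes a genuinely different, more elementary route. Isaacs obtains Corollary 11.22 from the projective-representation theory of his Chapter 11. There, an invariant $\chi$ gives a projective representation of $G$ extending $\rho$, whose factor set defines a class in $M(G/N)$. That class vanishes exactly when $\rho$ extends, and $M(G/N)=1$ for cyclic $G/N$.

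Your intertwiner $A$, normalised by Schur's lemma and an $m$-th root so that $A^m=\rho(g^m)$, is precisely the explicit trivialisation of that class in the cyclic case. The homomorphism check does go through: for $i+j\ge m$, write $g^ix\,g^jy=g^{i+j-m}\bigl(g^m(g^{-j}xg^j)y\bigr)$ and use $A^m=\rho(g^m)$ together with $\rho(g^{-j}xg^j)=A^{-j}\rho(x)A^j$.

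What each route buys: yours is self-contained and produces the extension by explicit matrices. The cohomological one generalises at once (every $G$-invariant irreducible character of $N$ extends whenever $M(G/N)=1$), and it matches the cocycle and transgression viewpoint used throughout the rest of this paper.
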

	
	Next, we recall the following definition from \cite{gonzalo}. 
	\begin{definition} \label{covering group}
		Let $G$ be a group. A group $A.G$ is called a covering of $G$ if there exists a central extension
		\[
		1 \to A \to A.G \to G \to 1
		\]
		such that $A \subseteq M(G)$ and $A \subseteq (A.G)'$. 
		Furthermore, if $A \cong M(G)$, then $A.G$ is called a representation group of $G$, and we denote it by  $G^*$.
	\end{definition} 
	Let $1 \to A \to H \xrightarrow{f} G \to 1$ be a central extension and $\mu$ be a section of $f$, i.e., $\mu: G \to H$ is a map such that $f \circ \mu =id_G$ and $\mu(1)= 1$. 
	For a $\chi \in \mathrm{Hom}(A, \mathbb{C}^{\times})$, define 
	$\alpha_\chi : G \times G \to \mathbb{C}^{\times}$ by $\alpha_\chi(x, y) = \chi(\mu(x)\mu(y)\mu(xy)^{-1})$, $x,y \in G$. Then the assignment $\chi \mapsto [\alpha_\chi]$ induces a homomorphism $\mathrm{tra}: \mathrm{Hom}(A, \mathbb{C}^{\times}) \to \mathrm{H}^2(G, \mathbb{C}^{\times})$, called the transgression map associated with the given central extension. For simplicity, we shall denote the representative cocycle $\alpha_\chi$ by $\mathrm{tra}(\chi)$. 
	
	Now, the result below follows from \cite[Chapter 2, Theorem 2.5]{karpilovsky} and \cite[Proposition 2.1.7]{schurmultiplier}.
	\begin{lemma} \label{exact sequence}
		Let $A \subseteq G' \cap Z(G)$ and $1 \to A \to G \to G/A \to 1$ be the natural exact sequence. 
		Then the induced sequence
		
		$$ 1  \xrightarrow{}\mathrm{Hom}(A,\mathbb{C}^{\times}) 
		\xrightarrow{\mathrm{tra}} 
		\mathrm{H}^2(G/A,\mathbb{C}^{\times}) 
		\xrightarrow{\mathrm{inf}} \mathrm{H}^2(G,\mathbb{C}^{\times})$$
		is exact.
		
	\end{lemma}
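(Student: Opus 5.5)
Throughout, write $H = G/A$ and let $\pi\colon G \to H$ be the quotient map. The plan is to verify the exactness of
\[
1 \to \Hom(A,\C^\times) \xrightarrow{\tra} \Ho^2(H,\C^\times) \xrightarrow{\inf} \Ho^2(G,\C^\times)
\]
at its two non-trivial spots by direct cocycle computations. Fix a section $\mu\colon H \to G$ with $\mu(1)=1$. For $\chi \in \Hom(A,\C^\times)$, the cocycle $\tra(\chi)$ is $\alpha_\chi(x,y)=\chi(\mu(x)\mu(y)\mu(xy)^{-1})$. That this is a cocycle, and that its class is independent of $\mu$, are the standard facts recalled before the statement, so I will take them as given.

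\emph{Composite is trivial ($\inf\circ\tra=1$).} The inflated cocycle is $\alpha_\chi(\pi(g),\pi(h))$. To show it is a coboundary on $G$, write each $g$ uniquely as $g=\mu(\pi(g))\,a_g$ with $a_g\in A$. Put $t(g)=\chi(a_g)^{-1}$, or its inverse as the sign convention requires. Since $A$ is central, a direct expansion of $\mu(\pi g)\mu(\pi h)\mu(\pi(gh))^{-1}$ in terms of $a_g,a_h,a_{gh}$ gives $\alpha_\chi(\pi g,\pi h)=t(g)t(h)t(gh)^{-1}$, up to orientation. This inclusion needs no hypothesis on $A$ beyond centrality.

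\emph{Kernel of $\inf$ is contained in the image of $\tra$.} Suppose $\beta\in Z^2(H,\C^\times)$ and $\beta(\pi g,\pi h)=t(g)t(h)t(gh)^{-1}$ for some $t\colon G\to\C^\times$.
\begin{itemize}
\item Restrict to $A\times A$: there $\beta(1,1)$ is constant, so $t|_A$ is a homomorphism up to that constant. After normalizing $\beta$ and $t$, set $\chi=t|_A$.
\item Compare $t(ga)$ with $t(g)t(a)$ for $a\in A$. Since $\beta(\pi g,1)$ is constant, $t(ga)=t(g)\chi(a)$.
\item Define $s(x)=t(\mu(x))$ on $H$. Writing $\mu(x)\mu(y)=\mu(xy)c(x,y)$ with $c(x,y)\in A$, we get $\beta(x,y)=s(x)s(y)s(xy)^{-1}\chi(c(x,y))^{-1}$.
\end{itemize}
Hence $[\beta]=[\alpha_\chi]^{\pm1}$, which lies in the image of $\tra$.

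\emph{Injectivity of $\tra$.} This is the only step that uses $A\subseteq G'$, and I expect it to be the main obstacle. Suppose $\alpha_\chi=\delta s$ for some $s\colon H\to\C^\times$. Define $\psi\colon G\to\C^\times$ by $\psi(\mu(x)a)=s(x)^{-1}\chi(a)$, again with the orientation fixed by the convention. The cocycle identity then shows that $\psi$ is a homomorphism $G\to\C^\times$ extending $\chi$. Any such linear character is trivial on $G'$, and $A\subseteq G'$, so $\chi=1$.

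The main care needed throughout is keeping the orientation conventions consistent, i.e. the order of $\mu(x)\mu(y)$ versus $\mu(xy)$. Alternatively, the whole argument could be shortened by citing the five-term inflation–restriction sequence and noting that $\Hom(G,\C^\times)\to\Hom(A,\C^\times)$ is zero because $A\subseteq G'$.
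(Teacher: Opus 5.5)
Your argument is correct. It differs from the paper only in that the paper does not prove the lemma itself. It cites the standard exact sequence
$\Hom(G,\C^\times)\to\Hom(A,\C^\times)\xrightarrow{\tra}\Ho^2(G/A,\C^\times)\xrightarrow{\mathrm{inf}}\Ho^2(G,\C^\times)$
for central extensions (Karpilovsky, and Karpilovsky's book on the Schur multiplier). Injectivity of $\tra$ then follows because $A\subseteq G'$ makes the restriction map $\Hom(G,\C^\times)\to\Hom(A,\C^\times)$ trivial. This is exactly the shortcut in your last sentence.

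Your main argument instead re-derives that sequence by hand in the central case. This makes the proof self-contained and the bookkeeping transparent. Centrality alone gives $\ker(\mathrm{inf})=\operatorname{Im}(\tra)$; in fact you get precisely $[\beta]=\tra(\chi^{-1})$. The hypothesis $A\subseteq G'$ enters only in the injectivity of $\tra$. The citation buys brevity and nothing more.

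One small correction concerns the injectivity step. Under the convention $\delta s(x,y)=s(x)s(y)s(xy)^{-1}$, which you used in the first step, the extension must be $\psi(\mu(x)a)=s(x)\chi(a)$, not $s(x)^{-1}\chi(a)$. For $\psi(\mu(x)a)=f(x)\chi(a)$, the homomorphism condition reduces to
$\chi\bigl(\mu(x)\mu(y)\mu(xy)^{-1}\bigr)=f(x)f(y)f(xy)^{-1}$,
which forces $f=s$ up to a linear character of $G/A$. Note also that $s(1)=\alpha_\chi(1,1)=1$, and this is what gives $\psi|_A=\chi$.
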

	
	The next result establishes a relationship between the projective representations of a group $G$ and those of a suitable quotient group of $G$,
	which follows from the proof of \cite[Theorem 3.2]{hatui}.
	\begin{theorem} \label{bijective correspondence via inflation}
		Let $A$ be a subgroup of a finitely generated group $G$ such that $A \subseteq G' \cap Z(G)$ and $[\alpha] \in \mathrm { Im } \big(\mathrm{inf}: \mathrm{H}^2(G/A, \C^{\times}) \to \mathrm{H}^2(G, \C^{\times})\big)$. Suppose $\rho : G \to \mathrm{GL}(V) \in \mathrm{Irr}^\alpha(G)$.
		Define $\tilde{\rho} : G/A \to \mathrm{GL}(V)$ by 
		$\tilde{\rho}(gA) = \rho\big(\mu(gA)\big)$, where $\mu : G/A \to G$ is a section of $G \twoheadrightarrow G/A$.
		Then the map
		\[
		\phi : \mathrm{Irr}^\alpha(G) \longrightarrow 
		\bigcup_{\{[\beta] \in \mathrm{H}^2(G/A, \mathbb{C}^{\times}) \mid \mathrm{inf}([\beta]) = [\alpha]\}} 
		\mathrm{Irr}^\beta(G/A)
		\]
		given by $\phi(\rho) = \tilde{\rho}$ is a dimension-preserving bijective map. 
	\end{theorem}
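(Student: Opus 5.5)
The plan is to reduce to a convenient cocycle representative and then check directly that restriction along the section $\mu$ and extension by a character of $A$ are mutually inverse.

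\textbf{Choice of representative.} By Lemma~\ref{projectively equivalent to an ordinary representation} and the convention that we work with a fixed representative of each class, I will assume $\alpha=\mathrm{inf}(\beta_0)$ for a normalized $\beta_0\in Z^2(G/A,\C^\times)$. Concretely, $\alpha(x,y)=\beta_0(xA,yA)$. Because $\beta_0$ is normalized, $\alpha(a,g)=\alpha(g,a)=1$ for all $a\in A$ and $g\in G$.

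\textbf{Behaviour of $\rho$ on $A$.} Take $\rho\in\mathrm{Irr}^\alpha(G)$ and $a\in A$. Since $A$ is central,
\[
\rho(a)\rho(g)=\rho(ag)=\rho(ga)=\rho(g)\rho(a).
\]
So $\rho(a)$ commutes with the irreducible set $\rho(G)$, and Schur's lemma gives $\rho(a)=\chi(a)I$. Because $\alpha$ is trivial on $A\times A$, the map $\chi$ is a homomorphism, so $\chi\in\mathrm{Hom}(A,\C^\times)$. More generally $\rho(ga)=\chi(a)\rho(g)$ for all $g\in G$.

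\textbf{The forward map lands in the target.} Put $c(x,y)=\mu(x)\mu(y)\mu(xy)^{-1}\in A$. For $x,y\in G/A$,
\[
\tilde\rho(x)\tilde\rho(y)=\beta_0(x,y)\,\rho\bigl(c(x,y)\mu(xy)\bigr)=\beta_0(x,y)\,\chi(c(x,y))\,\tilde\rho(xy).
\]
Hence $\tilde\rho$ is a $\beta$-representation with $\beta=\beta_0\cdot\mathrm{tra}(\chi)$. By Lemma~\ref{exact sequence} we have $\mathrm{inf}\circ\mathrm{tra}=1$, so $\mathrm{inf}[\beta]=[\alpha]$. Irreducibility is preserved because $\tilde\rho(G/A)$ and $\rho(G)$ agree up to scalars and therefore span the same algebra. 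The map clearly preserves dimension, and it sends linearly equivalent representations to linearly equivalent ones.

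\textbf{Inverse map.} Let $[\beta]$ satisfy $\mathrm{inf}[\beta]=[\alpha]$. Exactness at $\mathrm{H}^2(G/A,\C^\times)$ in Lemma~\ref{exact sequence} gives $[\beta]=[\beta_0]\,\mathrm{tra}(\chi)$ for some $\chi$. Injectivity of $\mathrm{tra}$ makes this $\chi$ unique, and it is exactly the character attached to any $\rho$ with $\phi(\rho)$ of class $[\beta]$. Choose representatives $\beta=\beta_0\cdot\mathrm{tra}(\chi)$, so that each class in the union is taken in this form. Given $\sigma\in\mathrm{Irr}^\beta(G/A)$, define
\[
\rho(\mu(x)a)=\chi(a)\,\sigma(x).
\]
Reversing the computation above shows $\rho$ is an irreducible $\alpha$-representation of $G$ with $\phi(\rho)=\sigma$. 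Conversely, the formula $\rho(\mu(x)a)=\chi(a)\tilde\rho(x)$ shows that $\rho$ is recovered from $\phi(\rho)$. So $\phi$ is bijective, and it matches linear equivalence classes on both sides.

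\textbf{Main obstacle.} The delicate part is the representative bookkeeping. The target union must be indexed by fixed cocycles of the form $\beta_0\,\mathrm{tra}(\chi)$, and each $\chi$ must correspond to a unique class. Injectivity of $\mathrm{tra}$, which uses $A\subseteq G'$, is what makes the pieces of the union disjoint and the inverse well defined. The finite generation hypothesis plays no essential role here beyond the setting of \cite{hatui}.
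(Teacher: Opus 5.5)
Your proposal is correct and follows essentially the route the paper relies on. The paper defers to the proof of \cite[Theorem 3.2]{hatui}, and Remark~\ref{remark for bijective correspondence via inflation} records exactly your two ingredients: the target is read as $\bigcup_{\chi}\mathrm{Irr}^{\beta\,\mathrm{tra}(\chi)}(G/A)$ for an inflated representative $\alpha(g,h)=\beta(gA,hA)$, and the inverse is given by $\rho(a\mu(gA))=\chi(a)\tilde\rho(gA)$ with $\rho|_A=\chi$. Your Schur's lemma step, and your use of the injectivity of $\mathrm{tra}$ (coming from $A\subseteq G'$) to make the indexing by cohomology classes consistent, supply the details the paper leaves to the citation.
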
 
	
	\begin{remark} \label{remark for bijective correspondence via inflation}
		A closer look at the map $\phi$  gives the following.
		\begin{enumerate}
			\item  If \(\beta \in Z^{2}(G/A,\C^{\times})\) and $\alpha(g,h) = \beta(gA,hA), g,h \in G$, then $\phi$ gives a bijective correspondence between \(\mathrm{Irr}^{\alpha}(G)\) and
			$\bigcup_{\chi \in \mathrm{Hom}(A,\C^{\times})}
			\mathrm{Irr}^{\,\beta\mathrm{tra}(\chi)}(G/A)$.
			
			\item If $\tilde{\rho} \in \operatorname{Irr}^{\beta \mathrm{tra}(\chi)}(G/A)$, then $\rho : G \to \mathrm{GL}_n(\C)$ defined by
			\[
			\rho(a \mu(gA)) = \chi(a)\tilde{\rho}(gA) \quad \text{for all } a \in A, g \in G  
			\] is such that $\phi(\rho) = \tilde{\rho}$ and $\rho |_A=\chi$.
		\end{enumerate}
	\end{remark}
	We now recall the fact that every projective representation of $G$ lifts to an ordinary representation of $G^{*}$, where $G^*$ is a representation group of $G$. Let 
	$$1 \longrightarrow A \longrightarrow G^* \xrightarrow{f} G \longrightarrow 1$$
	be the corresponding central extension and $\mu$ be a section of $f$. Then the associated transgression map
	$
	\operatorname{tra} : \operatorname{Hom}(A, \mathbb{C}^{\times}) \longrightarrow 
	\mathrm{H}^{2}(G, \mathbb{C}^{\times})
	$
	is an isomorphism (see \cite [Theorem 2.1.4]{schurmultiplier}).
	Suppose $\chi \in \mathrm{Hom}(A, \mathbb{C}^{\times})$ and  $\alpha= \mathrm{tra}(\chi)$. For a given $\Gamma \in \operatorname{Irr}(G^* \mid \chi)$, let $\rho : G \to \mathrm{GL}_n(\C)$ be defined by
	\[
	\rho(g) = \Gamma(\mu(g)) \quad \text{for all }g \in G.
	\]
	It is easy to see that $\rho \in \operatorname{Irr}^{\alpha}(G)$; in this case, we say $\rho$ lifts to $\Gamma$.
	The next result follows from the proof of \cite[Chapter 4, Lemma 1.2]{karpilovsky3}. 
	
	\begin{theorem} \label{bijective correspondence}
		With the above notations, for $\alpha = \operatorname{tra}(\chi)$, the sets $\operatorname{Irr}(G^{*} \mid \chi)$ and $\operatorname{Irr}^{\alpha}(G)$ are in dimension-preserving bijective correspondence, given by the map
		\[
		\Gamma \longmapsto \rho.
		\]
		In particular, the sets
		\[
		\operatorname{Irr}(G^{*})
		\quad \text{and} \quad \bigcup_{[\alpha] \in \mathrm{H}^{2}(G, \mathbb{C}^{\times})}
		\operatorname{Irr}^{\alpha}(G)
		\] are in dimension-preserving bijective correspondence. 
	\end{theorem}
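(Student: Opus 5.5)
The plan is to realize the correspondence concretely through the section $\mu$. Fix the central extension $1 \to A \to G^* \xrightarrow{f} G \to 1$, where $\operatorname{tra}$ is an isomorphism, and fix $\chi \in \mathrm{Hom}(A,\C^\times)$ with $\alpha = \alpha_\chi$, so that $\alpha(x,y) = \chi(\mu(x)\mu(y)\mu(xy)^{-1})$.

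\emph{Forward direction.} Take $\Gamma \in \operatorname{Irr}(G^* \mid \chi)$. Since $A$ is central, Schur's lemma gives $\Gamma(a) = \chi(a) I$ for all $a \in A$; here we use that $\Gamma|_A$ is a multiple of a single linear character, which must be $\chi$. Put $\rho(g) = \Gamma(\mu(g))$. Then
\[
\rho(x)\rho(y) = \Gamma\bigl(\mu(x)\mu(y)\mu(xy)^{-1}\bigr)\,\Gamma(\mu(xy)) = \alpha(x,y)\rho(xy),
\]
and $\rho(1) = I$ because $\mu(1) = 1$. So $\rho$ is an $\alpha$-representation of the same degree. It is irreducible because $G^* = A\,\mu(G)$, which means any $\rho(G)$-invariant subspace is $\Gamma(G^*)$-invariant.

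\emph{Inverse.} Given $\rho \in \operatorname{Irr}^\alpha(G)$, every element of $G^*$ is uniquely $a\mu(g)$. Define $\Gamma(a\mu(g)) = \chi(a)\rho(g)$. To check that $\Gamma$ is a homomorphism, write $\mu(x)\mu(y) = c(x,y)\mu(xy)$ with $c(x,y) \in A$, so that $\chi(c(x,y)) = \alpha(x,y)$. Then
\[
\Gamma\bigl(a\mu(x)\,b\mu(y)\bigr) = \chi(ab)\,\chi(c(x,y))\,\rho(xy) = \chi(a)\chi(b)\,\rho(x)\rho(y),
\]
which is $\Gamma(a\mu(x))\,\Gamma(b\mu(y))$, using that $A$ is central. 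Irreducibility carries over for the same reason as before, and $\Gamma|_A = \chi$, so $\Gamma \in \operatorname{Irr}(G^* \mid \chi)$.

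\emph{Bijection on equivalence classes.} The two constructions are mutually inverse. A linear equivalence $T$ between $\Gamma_1$ and $\Gamma_2$ restricts to a linear equivalence between $\rho_1$ and $\rho_2$. Conversely, a $T$ intertwining $\rho_1$ and $\rho_2$ intertwines the $\Gamma_i$, since these agree on $A$ as the scalar $\chi$. Hence the map is a well-defined, degree-preserving bijection $\operatorname{Irr}(G^*\mid\chi) \to \operatorname{Irr}^\alpha(G)$. The main point needing care is exactly this: the choice of representative $\alpha_\chi$ is tied to the fixed $\mu$, and under the convention that each class is represented by a fixed cocycle, the identification is with $\operatorname{Irr}^{\alpha_\chi}(G)$.

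\emph{The ``in particular''.} $\operatorname{Irr}(G^*)$ is the disjoint union over $\chi \in \mathrm{Hom}(A,\C^\times)$ of the sets $\operatorname{Irr}(G^*\mid\chi)$, again because each irreducible restricts to $A$ as a multiple of one character. Since $\operatorname{tra}$ is a bijection onto $\mathrm{H}^2(G,\C^\times)$, assembling the bijections above gives the stated dimension-preserving correspondence.
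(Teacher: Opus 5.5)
Your proposal is correct and takes essentially the same approach as the paper. The paper gives no argument of its own beyond deferring to the proof of Karpilovsky's Chapter 4, Lemma 1.2, which is exactly this lifting construction: $\rho=\Gamma\circ\mu$, with inverse $a\mu(g)\mapsto\chi(a)\rho(g)$ (the same formula the paper uses in its remark on the inflation correspondence), together with the decomposition $\operatorname{Irr}(G^*)=\bigsqcup_{\chi}\operatorname{Irr}(G^*\mid\chi)$ and bijectivity of $\operatorname{tra}$ under the fixed-representative convention.
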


	We now summarize the following results from  \cite[Chapter 10, Lemma 4.3, Lemma 4.12]{karpilovsky}.
	
	\begin{lemma} \label{faithful projective representation}
		
		
		Let $\alpha \in Z^2(G,\mathbb{C}^{\times})$.
		\begin{enumerate}
			\item If $\rho \in \operatorname{Irr}^{\alpha}(G)$ is faithful, then $1$ is the only $\alpha$-regular element of $Z(G)$.
			
			\item Suppose that $G$ is nilpotent. If $1$ is the only $\alpha$-regular element of $Z(G)$, then every $\alpha$-representation of $G$ is faithful.
			
		\end{enumerate}
	\end{lemma}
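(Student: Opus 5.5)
For part (1) I will argue by contraposition using Schur's lemma, and for part (2) I will use that the projective kernel of an $\alpha$-representation is a normal subgroup together with the fact that in a nilpotent group every non-trivial normal subgroup meets the centre non-trivially. The one computation used in both directions is the following. Let $\rho$ be any $\alpha$-representation and let $z \in Z(G)$. Since $zg = gz$ for every $g \in G$,
\[
\rho(z)\rho(g) = \alpha(z,g)\rho(zg), \qquad \rho(g)\rho(z) = \alpha(g,z)\rho(zg).
\]
Because $\rho(zg)$ is invertible, $\rho(z)$ commutes with $\rho(g)$ if and only if $\alpha(z,g) = \alpha(g,z)$. Since $C_G(z) = G$, it follows that $\rho(z)$ commutes with all of $\rho(G)$ if and only if $z$ is an $\alpha$-regular element of $Z(G)$.

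For (1), suppose $\rho \in \operatorname{Irr}^{\alpha}(G)$ is faithful and $z \in Z(G)$ is $\alpha$-regular. By the computation above, $\rho(z)$ lies in the commutant of $\{\rho(g) : g \in G\}$. The linear span of $\rho(G)$ is the image of the twisted group algebra $\mathbb{C}^{\alpha}G$, and irreducibility of $\rho$ means $V$ is a simple module over it. Schur's lemma therefore forces $\rho(z) = \lambda I_V$ for some $\lambda \in \mathbb{C}^{\times}$. Thus $z \in \ker\rho = 1$, so $z = 1$, which is what (1) asserts.

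For (2), let $G$ be nilpotent and let $\rho$ be an arbitrary $\alpha$-representation, not necessarily irreducible, with $K = \ker\rho$. The first step is to check that $K$ is normal in $G$. If $k \in K$ with $\rho(k) = \lambda I$ and $g \in G$, then writing $gk = (gkg^{-1})g$ and applying the cocycle relation twice gives
\[
\rho(gkg^{-1}) = c\,\rho(g)\rho(k)\rho(g)^{-1} = c\lambda I
\]
for some scalar $c \in \mathbb{C}^{\times}$ built from values of $\alpha$. Hence $gkg^{-1} \in K$. Now suppose $K \neq 1$. Since $G$ is nilpotent, $K \cap Z(G) \neq 1$, so choose $1 \neq z \in K \cap Z(G)$. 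Then $\rho(z)$ is scalar and so commutes with every $\rho(g)$. By the opening computation, $z$ is an $\alpha$-regular element of $Z(G)$, contradicting the hypothesis. Hence $K = 1$ and $\rho$ is faithful.

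The only point needing care is the appeal to Schur's lemma in (1) for projective representations. I would justify it by passing to the twisted group algebra as above, or equivalently by lifting $\rho$ to an irreducible ordinary representation $\Gamma$ of $G^*$ via Theorem \ref{bijective correspondence}. In the lifting approach, $\mu(z)$ commutes with $\Gamma(\mu(g))$ up to the scalar $\alpha(z,g)/\alpha(g,z) = 1$, and $\Gamma(G^*)$ is generated by $\Gamma(A)$, which is scalar, together with the matrices $\Gamma(\mu(g))$. So ordinary Schur applies to $\Gamma$.
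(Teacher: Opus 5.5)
Your proof is correct. For (1), Schur's lemma applies to $\rho(z)$, which commutes with every $\rho(g)$ exactly when $\alpha(z,g)=\alpha(g,z)$. For (2), you use normality of the projective kernel together with the fact that a non-trivial normal subgroup of a nilpotent group meets $Z(G)$.

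The paper gives no argument of its own and only cites Karpilovsky (Chapter 10, Lemmas 4.3 and 4.12). Your argument is the standard one, and it relies on the same kernel-meets-centre fact that the paper invokes in Lemma \ref{restriction}(2).
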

	
	\begin{lemma} \label{restriction}
		Let $G$ be a nilpotent group. Suppose that $\alpha \in Z^2(G,\mathbb{C}^{\times})$ and $\rho \in \mathrm{Irr}^\alpha(G)$. Then, the following hold:
		\begin{enumerate}
			\item If $\rho$ is faithful, then any element of $\mathrm{Irr}^\alpha(G)$ is faithful.
			\item $\rho$ is faithful if and only if $\rho|_{Z(G)}$ is faithful. 
		\end{enumerate}
		
	\end{lemma}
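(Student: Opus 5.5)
The plan rests on the standard description of irreducible $\alpha$-representations of nilpotent groups, via lifting to a representation group and Clifford theory. It also uses the fact that, for a nilpotent group, the set of $\alpha$-regular elements of $Z(G)$ is what detects faithfulness.

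The key auxiliary claim I will establish is:
\[
\ker\rho \;=\; \{z\in Z(G) : z \text{ is } \alpha\text{-regular}\}\quad\text{for every } \rho\in\irr^\alpha(G),\ G \text{ nilpotent}.
\]
Granting this, both parts of Lemma \ref{restriction} follow at once.
\begin{enumerate}
\item The right-hand side does not depend on $\rho$, so if one $\rho$ is faithful, all of $\irr^\alpha(G)$ is faithful.
\item $\ker(\rho|_{Z(G)})=\ker\rho\cap Z(G)$, and the kernel is already contained in $Z(G)$. Hence $\rho$ is faithful if and only if $\rho|_{Z(G)}$ is faithful.
\end{enumerate}

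\textbf{Step 1: $\ker\rho\subseteq Z(G)$ and it consists of $\alpha$-regular elements.} Suppose $\rho(g)=\lambda I$. Then $\rho(g)$ commutes projectively with every $\rho(h)$, so $\rho(g)\rho(h)=\rho(h)\rho(g)$ for all $h$. Expanding both sides gives $\alpha(g,h)\rho(gh)=\alpha(h,g)\rho(hg)$.

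For $h\in C_G(g)$ we have $gh=hg$, so this reduces to $\alpha(g,h)=\alpha(h,g)$, i.e. $g$ is $\alpha$-regular. It remains to show $g$ is central, which is not automatic.

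To do this I pass to a representation group $G^*$ and lift $\rho$ to $\Gamma\in\irr(G^*\mid\chi)$ using Theorem \ref{bijective correspondence}. Then $\Gamma(\mu(g))$ is scalar, so $\mu(g)\in Z(\Gamma)$. The centre of the character satisfies $Z(\Gamma)/\ker\Gamma=Z(G^*/\ker\Gamma)$. Consequently $[\mu(g),G^*]\subseteq\ker\Gamma$.

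This containment does not by itself force $g\in Z(G)$; indeed, in general the projective kernel can be non-central. So I will instead compare with the $\alpha$-regular central elements more carefully.

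\textbf{Step 2: the $\alpha$-regular central elements lie in the kernel.} Let $z\in Z(G)$ be $\alpha$-regular. Then $\mu(z)$ is central in $G^*$, because its commutators with lifts are $\alpha(z,h)\alpha(h,z)^{-1}$-twisted, and these twists are trivial by regularity. Since $\Gamma$ is irreducible, Schur's lemma gives that $\Gamma(\mu(z))$ is scalar, so $z\in\ker\rho$.

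\textbf{Step 3 (main obstacle): the reverse inclusion.} I need to show that a non-central element $g$ with $\rho(g)$ scalar cannot exist when $G$ is nilpotent. Suppose $\ker\rho$ is not contained in $Z(G)$.

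The projective kernel $K=\ker\rho$ is a normal subgroup of $G$. Since $G$ is nilpotent, the non-trivial normal subgroup $K/(K\cap Z(G))$ of $G/(K\cap Z(G))$ meets the centre of that quotient non-trivially. So pick $g\in K\setminus Z(G)$ with $[g,G]\subseteq K\cap Z(G)$.

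On the elements of $K\cap Z(G)$ the map $\rho$ is scalar. Then $h\mapsto$ the scalar by which $\rho(g)$ and $\rho(h)$ fail to commute gives a homomorphism $G\to\mathbb C^\times$ measuring $[g,h]$. Since $\rho(g)$ is itself scalar, this homomorphism is trivial.

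The remaining task is to translate this triviality into $[g,h]\in$ the $\alpha$-regular part of $Z(G)$, and from there derive a contradiction. I expect this translation to be the hardest step. The first thing I would try is to work inside $G^*$, pulling back along $G^*\to G$ and using Lemma \ref{faithful projective representation} applied there. Alternatively, I would look for a cleaner route through the identification $\ker\rho=$ (image of $Z(\Gamma)$), combined with the fact that $Z(\Gamma)\supseteq Z(G^*)$.

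With Steps 1–3 combined, the claimed equality holds, and the two parts of Lemma \ref{restriction} follow as described at the start.
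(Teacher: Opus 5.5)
There is a genuine gap. The auxiliary claim everything rests on, $\ker\rho=\{z\in Z(G): z \text{ is } \alpha\text{-regular}\}$, is false for nilpotent groups. Step 3 is therefore an attempt to prove something that cannot be proved.

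Projective kernels of irreducible $\alpha$-representations of nilpotent groups need not be central. Take $G=D_8$, $\alpha=1$ and $\rho=1_G$. Every $1\times 1$ matrix is scalar, so $\ker\rho=G\neq Z(G)$, while the set of $\alpha$-regular central elements is $Z(G)$. This is not an artifact of degree one. For $G=D_8\times D_8$ and $\rho=\sigma\otimes 1_{D_8}$, with $\sigma$ the $2$-dimensional irreducible representation of the first factor, one gets $\ker\rho=Z(D_8)\times D_8\not\subseteq Z(G)$. So no contradiction can come out of Step 3: the trivial commutator homomorphism you construct is perfectly consistent with $g\in\ker\rho\setminus Z(G)$.

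What you need, and what holds for any group and any $\rho\in\irr^\alpha(G)$, is
\[
\ker\rho\cap Z(G)=\{z\in Z(G) : z \text{ is } \alpha\text{-regular}\}.
\]
Your Step 1 computation gives $\subseteq$, since $C_G(z)=G$ for central $z$. For $\supseteq$ you do not need $G^*$. If $z$ is central and $\alpha$-regular, then $\rho(z)\rho(h)=\alpha(z,h)\rho(zh)=\alpha(h,z)\rho(hz)=\rho(h)\rho(z)$, so $\rho(z)$ is scalar by Schur's lemma. Incidentally, your Step 2 claim that $\mu(z)$ is central in $G^*$ is too strong. Regularity only gives $\chi([\mu(z),\mu(h)])=1$; this happens to suffice because $\Gamma|_A=\chi I$.

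Nilpotency enters exactly once. $\ker\rho$ is normal, being the kernel of $G\to\PGL(V)$, and a non-trivial normal subgroup of a nilpotent group meets $Z(G)$ non-trivially. Hence $\ker\rho=1$ if and only if $\ker\rho\cap Z(G)=1$. This gives (2) at once, since $\ker(\rho|_{Z(G)})=\ker\rho\cap Z(G)$. It gives (1) because the right-hand side of the display does not depend on $\rho$.

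This is the paper's argument: (1) is deduced from Lemma~\ref{faithful projective representation}, and (2) from the fact that a non-trivial $\ker\rho$ meets $Z(G)$ non-trivially. You already had the normal-subgroup fact in Step 3; you were aiming it at the wrong target.
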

	
	\begin{proof}
		(1) follows  from Lemma \ref{faithful projective representation}. 
		
		(2) If ker $\rho \neq 1$, then the result follows by the fact that ker $\rho$ intersects $Z(G)$ non-trivially (see \cite[Chapter 10, Lemma 1.7]{karpilovsky2}). 
		
	\end{proof}
	
	
	\begin{corollary} \label{restriction on A is faithful}
		Let $G$ be a $p$-group such that $Z(G)$ is cyclic. Suppose that $\alpha \in Z^2(G,\mathbb{C}^{\times})$ and $\rho \in \mathrm{Irr}^\alpha(G)$. Then, $\rho$ is faithful if and only if $\rho|_A$ is faithful for any non-trivial central subgroup $A$ of $G$.   
	\end{corollary}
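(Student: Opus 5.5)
The plan is to reduce everything to Lemma \ref{restriction}(2), which says that $\rho$ is faithful if and only if $\rho|_{Z(G)}$ is faithful (a $p$-group is nilpotent). The statement should be read as: $\rho$ is faithful if and only if $\rho|_A$ is faithful for every (equivalently, for some fixed) non-trivial central subgroup $A$. So it suffices to show that, when $Z(G)$ is cyclic, faithfulness of $\rho|_{Z(G)}$ is equivalent to faithfulness of $\rho|_A$ for a non-trivial $A \le Z(G)$. Here faithfulness is meant projectively: the kernel of $\rho|_A$ is $\ker\rho \cap A$, the set of elements of $A$ mapped to scalars.

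For the forward direction, assume $\rho$ is faithful, so $\ker\rho = 1$. Then for any subgroup $A$ we have $\ker(\rho|_A) = \ker\rho \cap A = 1$. This direction is immediate from the definition of the kernel.

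For the converse, suppose $\rho|_A$ is faithful for some non-trivial central $A$, and assume for contradiction that $\rho$ is not faithful. Set $K = \ker\rho$.
\begin{itemize}
\item First, $K$ is a subgroup. If $\rho(g)=\lambda I$ and $\rho(h)=\mu I$, then $\rho(gh) = \alpha(g,h)^{-1}\rho(g)\rho(h)$ is scalar, and inverses are handled in the same way.
\item $K$ is also normal, since conjugating a scalar matrix by $\rho(x)$ leaves it scalar up to cocycle factors.
\item By \cite[Chapter 10, Lemma 1.7]{karpilovsky2}, as already used in the proof of Lemma \ref{restriction}(2), $K \cap Z(G)$ is non-trivial.
\item $Z(G)$ is a cyclic $p$-group, so its subgroups form a chain and it has a unique subgroup $Z_1$ of order $p$. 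Hence $Z_1 \subseteq K\cap Z(G)$.
\item Every non-trivial subgroup of $Z(G)$ contains $Z_1$, so $Z_1 \subseteq A$. Therefore $Z_1 \subseteq \ker(\rho|_A)$, contradicting the faithfulness of $\rho|_A$.
\end{itemize}

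The main point requiring care is making sure the projective kernel behaves like a genuine subgroup, so that "non-trivial intersection with $Z(G)$" yields a whole subgroup of $Z(G)$ and hence contains the unique order-$p$ subgroup $Z_1$. The subgroup property is exactly the cocycle computation in the first step. After that, the uniqueness of the minimal subgroup of a cyclic $p$-group finishes the argument.
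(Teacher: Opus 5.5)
Your proof is correct and takes essentially the same route as the paper. Both arguments reduce to the centre via Lemma \ref{restriction}(2), that is, the fact that a non-trivial projective kernel meets $Z(G)$ non-trivially, and then use the chain structure of the subgroups of the cyclic $p$-group $Z(G)$. The only difference is cosmetic: the paper writes $Z(G)=\langle x\rangle$, $A=\langle x^{p^m}\rangle$ and pushes a kernel element $x^{kp^n}$ into $A$ by taking powers, whereas you observe directly that $\ker\rho\cap Z(G)$ and $A$ both contain the unique subgroup of order $p$, which is cleaner.
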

	
	\begin{proof}
		If $\rho$ is faithful, then clearly $\rho|_A$ is faithful. Now, let $Z(G)= \langle x \rangle$ and $A= \langle x^{p^m} \rangle$. Assume that $\rho|_A$ is faithful. If $\rho|_{Z(G)}$ is not faithful, then $\exists$ $g \in Z(G)$ such that $\rho(g)= \lambda I$ for some $\lambda \in \C^\times$. Let $g= x^{kp^n}$ for some $1 \leq k \leq p-1$. If $m \leq n$, then $g \in A$, which leads to a contradiction. Suppose $m> n$. As $Z(G)=\langle x \rangle= \langle x^k \rangle$, there is a $r>0$ such that  $\rho(x^{p^m})= \rho(x^{krp^m})= \lambda_1 I$ for some $\lambda_1 \in \C^\times$, a contradiction.  
		Hence, $\rho|_{Z(G)}$ is faithful. The result now follows from Lemma \ref{restriction}.
	\end{proof}

	\begin{lemma}[\cite{karpilovsky3}, Chapter 8, Theorem 2.21] \label{properties of abelian groups}
		Let $G$ be a finite abelian group. Suppose $\alpha \in Z^2(G,\mathbb C^\times)$ and $G_0$ is the subgroup of $G$ consisting of all $\alpha$-regular elements of $G$. Then $\alpha|_{G_0 \times G_0}$ is a coboundary and $\operatorname{deg}(\rho) = \sqrt{|G/G_0|}$ for any $\rho \in \mathrm{Irr}^\alpha(G)$.
	\end{lemma}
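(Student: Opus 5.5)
The plan is to work with the commutator pairing of $\alpha$ on the abelian group $G$. Define $f_\alpha(x,y)=\alpha(x,y)\alpha(y,x)^{-1}$ for $x,y\in G$. First I will verify, using only the cocycle identity and the commutativity of $G$, that $f_\alpha$ is an alternating bicharacter, i.e. multiplicative in each variable with $f_\alpha(x,x)=1$. I will also check that $f_\alpha$ is unchanged when $\alpha$ is multiplied by a coboundary. Since $C_G(x)=G$ for every $x$, the definition of $\alpha$-regular gives exactly
\[
G_0=\{x\in G : f_\alpha(x,g)=1 \text{ for all } g\in G\},
\]
the radical of $f_\alpha$. Bimultiplicativity then shows directly that $G_0$ is a subgroup.

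For the first claim, the restriction $\beta=\alpha|_{G_0\times G_0}$ is a symmetric cocycle on the abelian group $G_0$, since $f_\alpha$ is trivial there. A symmetric $2$-cocycle with values in $\mathbb C^\times$ defines an \emph{abelian} central extension $1\to\mathbb C^\times\to E\to G_0\to 1$. Because $\mathbb C^\times$ is divisible, hence injective as a $\mathbb Z$-module, this extension splits. Equivalently, $\operatorname{Ext}^1_{\mathbb Z}(G_0,\mathbb C^\times)=0$. A splitting section $s$ gives $\beta(x,y)=s(x)s(y)s(xy)^{-1}$ after the standard normalization, so $\beta$ is a coboundary. Should this be considered too slick, I can instead decompose $G_0$ into cyclic factors and trivialize $\beta$ by hand, using $\mathrm{H}^2(C_n,\mathbb C^\times)=1$ together with symmetry to kill the cross terms.

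For the degree, let $\rho\in\mathrm{Irr}^\alpha(G)$ have degree $d$. From $\rho(x)\rho(y)=\alpha(x,y)\rho(xy)$ and $xy=yx$ one gets
\[
\rho(x)\rho(y)=f_\alpha(x,y)\rho(y)\rho(x).
\]
I will argue in three steps.
\begin{enumerate}
\item If $x\in G_0$, then $\rho(x)$ commutes with every $\rho(y)$. By Schur's lemma for irreducible $\alpha$-representations, $\rho(x)$ is a scalar, so $|\operatorname{tr}\rho(x)|=d$, since $\rho(x)$ has finite order up to scalars and we may take $\alpha$ with values in roots of unity.
\item If $x\notin G_0$, choose $y$ with $f_\alpha(x,y)\neq 1$. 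Conjugating $\rho(x)$ by $\rho(y)$ gives $\operatorname{tr}\rho(x)=f_\alpha(x,y)\operatorname{tr}\rho(x)$, hence $\operatorname{tr}\rho(x)=0$.
\item The orthogonality relation for irreducible projective characters (Karpilovsky, Chapter 3) gives $\sum_{g\in G}|\operatorname{tr}\rho(g)|^2=|G|$.
\end{enumerate}
Combining these yields $|G_0|\,d^2=|G|$, so $d=\sqrt{|G/G_0|}$.

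The main obstacle is the orthogonality relation in the third step, which I will take from the standard theory. If I wanted to avoid it, I would prove $d^2=|G/G_0|$ by two inequalities. For the lower bound, the operators $\rho(g)$, $g$ running over coset representatives of $G/G_0$, are linearly independent in $M_d(\mathbb C)$, because they transform by distinct characters $f_\alpha(g,\cdot)$ under conjugation; this gives $d^2\ge|G/G_0|$. For the upper bound, the center of the twisted group algebra $\mathbb C^\alpha G$ is spanned by the basis elements $g\in G_0$, so it has exactly $|G_0|$ simple components. These components are permuted transitively by the twists $\rho\mapsto\lambda\rho$ with $\lambda\in\widehat G$, so they all have the same dimension, and therefore $|G_0|\,d^2=|G|$.
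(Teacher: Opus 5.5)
The paper gives no proof of this lemma; it is quoted from Karpilovsky (\cite{karpilovsky3}, Chapter 8, Theorem 2.21), so there is no in-paper argument to compare with. Your proof is correct and self-contained:
\begin{itemize}
\item $f_\alpha$ is a coboundary-invariant alternating bicharacter, and its radical is exactly $G_0$.
\item $\alpha|_{G_0\times G_0}$ is a coboundary because the abelian extension it defines splits, by divisibility of $\mathbb{C}^\times$.
\item $\rho(x)$ is scalar for $x\in G_0$ by Schur's lemma, and $\operatorname{tr}\rho(x)=0$ for $x\notin G_0$. Together with orthogonality this gives $|G_0|\,d^2=|G|$.
\end{itemize}

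Two small points should be tightened. First, the normalization to root-of-unity values is needed only for the form $\sum_g|\operatorname{tr}\rho(g)|^2=|G|$. The identity $\sum_{g\in G}\operatorname{tr}\rho(g)\,\operatorname{tr}(\rho(g)^{-1})=|G|$ holds for an arbitrary cocycle. To see this, average $\rho(g)X\rho(g)^{-1}$ over $G$: the cocycle factors cancel, so the average commutes with every $\rho(h)$ and is therefore scalar. Each $x\in G_0$ then contributes $(d\lambda_x)(d\lambda_x^{-1})=d^2$, so no unitarity is required. If you keep your version, state explicitly that replacing $\alpha$ by a cohomologous cocycle changes neither $G_0$ nor the degrees of irreducible representations.

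Second, in your alternative route you assert without proof that $\widehat G$ acts transitively on the simple components. It is true, for the following reason:
\begin{itemize}
\item An irreducible $\alpha$-representation is determined up to equivalence by the function $\omega$ on $G_0$ with $\rho(x)=\omega(x)I$. This $\omega$ is its central character, since the centre of $\mathbb{C}^\alpha G$ is spanned by $G_0$.
\item Two such functions $\omega,\omega'$ satisfy the same twisted multiplicativity, so $\omega'\omega^{-1}\in\operatorname{Hom}(G_0,\mathbb{C}^\times)$.
\item Any extension $\lambda\in\widehat G$ of $\omega'\omega^{-1}$ then satisfies $\lambda\rho\cong\rho'$.
\end{itemize}
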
  
	
	Frucht \cite{frucht} showed that a finite abelian group $G$ has a faithful irreducible projective representation if and only if $G$ is of symmetric type, i.e., $G$ can be written as a direct product of two isomorphic subgroups. 
	
	\begin{theorem} \label{projective irreducible embedding degree of finite abelian groups}
		Let $G$ be a finite abelian group of symmetric type. Then $\tau_{irr}(G) = \sqrt{|G|}$.    
	\end{theorem}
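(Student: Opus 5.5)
Let $G$ be a finite abelian group of symmetric type. The plan is to prove two inequalities: a lower bound $\tau_{irr}(G)\ge\sqrt{|G|}$ that holds for every faithful irreducible projective representation, and a construction attaining $\sqrt{|G|}$. By Frucht's theorem $\tau_{irr}(G)$ exists, so some $\alpha\in Z^2(G,\C^\times)$ and some faithful $\rho\in\mathrm{Irr}^\alpha(G)$ are available.

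\emph{Lower bound.} Since $G$ is abelian, $Z(G)=G$ and $C_G(x)=G$ for every $x$. Hence $x$ is $\alpha$-regular exactly when $\alpha(g,x)=\alpha(x,g)$ for all $g\in G$. In the notation of Lemma~\ref{properties of abelian groups}, the subgroup $G_0$ of $\alpha$-regular elements is therefore the set of $\alpha$-regular elements of $Z(G)$. Because $\rho$ is faithful, Lemma~\ref{faithful projective representation}(1) forces $G_0=1$. Lemma~\ref{properties of abelian groups} then gives $\deg(\rho)=\sqrt{|G/G_0|}=\sqrt{|G|}$.

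The same argument applies to any faithful irreducible projective representation, taken with its own cocycle. So every such representation has degree exactly $\sqrt{|G|}$, and $\tau_{irr}(G)=\sqrt{|G|}$. No separate minimization is needed: the degree is forced to equal $\sqrt{|G|}$.

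\emph{Consistency check.} The only delicate point is that the lemmas are phrased for a fixed cocycle, while $\tau_{irr}$ ranges over all projective representations. By Lemma~\ref{projectively equivalent to an ordinary representation} and Lemma~\ref{projectively equivalent}, every faithful irreducible projective representation is projectively equivalent to an $\alpha$-representation for a cocycle representative $\alpha$ of its class. This equivalence preserves both faithfulness and degree, so the argument above covers all cases.

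If one prefers not to rely on Frucht for existence, an explicit witness is available. Write $G=H\times H$ with $H=\prod_i \Z/n_i$, and take the cocycle $\alpha((a,b),(a',b'))=\prod_i\zeta_{n_i}^{b_i a_i'}$. The $\alpha$-regular elements $(a,b)$ satisfy $\prod_i\zeta_{n_i}^{b_ia_i'-b_i'a_i}=1$ for all $(a',b')$, which forces $a=b=0$. So $G_0=1$. Lemma~\ref{faithful projective representation}(2) then makes any $\alpha$-representation faithful, since abelian groups are nilpotent. Its irreducible ones have degree $\sqrt{|G|}$, namely $|H|$, realized by the Weyl--Heisenberg representation on $\C[H]$.
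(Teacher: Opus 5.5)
Your proof is correct, and it is essentially the argument behind the paper's one-line citation of Karpilovsky (Chapter 10, Lemma 4.5): you combine Frucht's existence theorem with Lemma~\ref{faithful projective representation}(1) and Lemma~\ref{properties of abelian groups}, so that any faithful irreducible $\alpha$-representation of the abelian group $G$ forces $G_0=1$ and therefore has degree exactly $\sqrt{|G|}$. Your version makes this self-contained within the paper's preliminaries, and your explicit bicharacter cocycle on $H\times H$ is a valid extra witness that removes the reliance on Frucht for existence.
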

	
	\begin{proof}
		This is a consequence of \cite[Chapter 10, Lemma 4.5]{karpilovsky}.     
	\end{proof}
	
	We now state some results on capable groups. A group $\Gamma$ is called capable if there exists a group $\triangle$ with $\Gamma \cong \triangle/Z(\triangle)$. The epicentre of $G$ is denoted by $Z^{*}(G)$, which is the smallest central subgroup of $G$ such that $G/Z^*(G)$ is capable.
	
	\begin{theorem}[\cite{schurmultiplier}, Theorem 2.5.10] \label{epicenter} Let $Z$ be a central subgroup of $G$. Then the following conditions are equivalent:
		\begin{enumerate}
			\item[(a)] $Z \subseteq Z^*(G)$
			\item[(b)] $\mathrm{inf} : \mathrm{H}^2(G/Z,\mathbb{C}^{\times})\rightarrow\mathrm{H}^2(G,\mathbb{C}^{\times})$ is surjective.   
		\end{enumerate}
	\end{theorem}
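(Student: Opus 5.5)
Since this is quoted from \cite{schurmultiplier}, I would reprove it through a representation group. Fix a central extension $1 \to A \to G^* \xrightarrow{f} G \to 1$ with $A \cong M(G)$, $A \subseteq (G^*)'$, and section $\mu$. Then $\operatorname{tra}:\operatorname{Hom}(A,\C^\times)\to \Ho^2(G,\C^\times)$ is an isomorphism, so every class has the form $[\alpha_\chi]$. Let $\widehat Z=f^{-1}(Z)$.

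The first step is to identify the epicentre concretely, namely to prove $Z^*(G)=f(Z(G^*))$. For "$\supseteq$": $G/f(Z(G^*))\cong G^*/Z(G^*)$ is capable. For "$\subseteq$": if $\Delta$ is any group with $\Delta/Z(\Delta)\cong G/N$, I would pull back along $G\to G/N$ and use the universal (projectivity) property of the Schur cover relative to central extensions to map $G^*$ into that pullback. This shows that elements of $f(Z(G^*))$ become central there, so they lie in $N$.

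Next I would reduce (b) to a commutator condition. For $z\in Z$ and $g\in G$, the commutator $[\mu(z),\mu(g)]$ lies in $A$, and it depends only on $z$ and $g$. A direct computation gives
\[
\chi([\mu(z),\mu(g)])=\alpha_\chi(z,g)\,\alpha_\chi(g,z)^{-1}.
\]
If $[\alpha_\chi]$ is inflated from $G/Z$, choose an inflated representative $\alpha'$. The ratio $\alpha(z,g)/\alpha(g,z)$ for commuting $z,g$ is a class invariant, and for an inflated cocycle it equals $\beta(1,gZ)/\beta(gZ,1)=1$. So $\chi$ kills $[\mu(z),\mu(g)]$. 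If inflation is surjective this holds for all $\chi$, hence $[\mu(z),\mu(g)]=1$. Therefore $\widehat Z\subseteq Z(G^*)$, and by the first step $Z\subseteq Z^*(G)$. This proves (b)$\Rightarrow$(a).

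For (a)$\Rightarrow$(b), assume $\widehat Z$ is central in $G^*$. I would show every $\alpha_\chi$ is cohomologous to an inflated cocycle. Consider $\bar G=G^*/\ker\chi$, a central extension of $G$ by the cyclic group $\chi(A)$, in which the image $\bar Z$ of $\widehat Z$ is a central abelian subgroup. The goal is to choose a section adapted to $\bar Z$, i.e.\ a homomorphism-like lift of $Z$ on which $\chi$-twisting is trivial, so that $\alpha_\chi$ becomes constant on $Z$-cosets. Equivalently, in the dual (homological) Ganea sequence
\[
Z\otimes G^{ab}\to M(G)\to M(G/Z),
\]
inflation is surjective iff the Ganea map vanishes, and that map is exactly $z\otimes g\mapsto[\mu(z),\mu(g)]$, which is trivial by hypothesis. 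I expect this converse direction, together with the step $Z^*(G)\subseteq f(Z(G^*))$, to be the main obstacle. I would try the Ganea sequence first, citing its exactness, rather than building sections by hand.
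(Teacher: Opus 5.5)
The paper quotes this result from Karpilovsky without proof. Your outline, which identifies the epicentre with $f(Z(G^*))$ and then uses Ganea's sequence, is the standard route, and most of it holds up. In (b)$\Rightarrow$(a), the commutator identity, the class-invariance argument and the conclusion $\widehat Z\subseteq Z(G^*)$ are all correct. For (a)$\Rightarrow$(b) you need only two things. The first is the easy inclusion $Z^*(G)\subseteq f(Z(G^*))$, which is what capability of $G/f(Z(G^*))\cong G^*/Z(G^*)$ gives, so your labels ``$\supseteq$'' and ``$\subseteq$'' are swapped. The second is exactness of Ganea's sequence, or of the Iwahori--Matsumoto sequence used in Section~3. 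The genuine gap is the other inclusion, $f(Z(G^*))\subseteq Z^*(G)$, which (b)$\Rightarrow$(a) needs in its last line.

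A representation group is not projective relative to arbitrary central extensions $1\to C\to E\to G\to 1$. A homomorphism $G^*\to E$ over $G$ exists if and only if the class of $E$ equals $\operatorname{tra}(\varphi)$ for some $\varphi\in\Hom(A,C)$. This fails in general: the image of $\operatorname{tra}$ has order at most $|\Hom(M(G),C)|$, which is smaller than $|\Ho^2(G,C)|$ as soon as $\mathrm{Ext}(G/G',C)\neq 0$.

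It fails in your own setting. Your pullback $P$ is a central extension of $G$ by $Z(\Delta)$ with $\pi(Z(P))\subseteq N$, and $Z(\Delta)$ is an arbitrary finite abelian group. Take $G=\Z/2\Z\times\Z/2\Z$, $N=1$ and $\Delta=\langle a,b\mid a^4=b^4=1,\ b^{-1}ab=a^{-1}\rangle$. Then $Z(\Delta)=\langle a^2,b^2\rangle$ and $\Delta/Z(\Delta)\cong G$. Every non-central element of $\Delta$ has order $4$, and the squares over the three non-trivial cosets are $a^2,b^2,b^2$. Hence $D_8$ cannot map to $\Delta$ over $G$, since it is generated by non-central involutions. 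Nor can $Q_8$, where $i^2=j^2=(ij)^2$.

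The repair is short, because your identity holds in any central extension, so you can use it in $P$ directly. Take a section $s$ of $P\to G$ and $\lambda\in\Hom(Z(\Delta),\C^\times)$. Then $\beta_\lambda(x,y)=\lambda\bigl(s(x)s(y)s(xy)^{-1}\bigr)$ is a cocycle of $G$. For $z\in Z$ and $g\in G$,
$\lambda([s(z),s(g)])=\beta_\lambda(z,g)\beta_\lambda(g,z)^{-1}=1$
by (b) and your class-invariance argument. Since this holds for every $\lambda$, $s(z)\in Z(P)$, so $z\in N$. Taking $N=Z^*(G)$ gives (a), and (b)$\Rightarrow$(a) no longer needs $G^*$ at all.

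Alternatively, push $P$ out along a faithful embedding $Z(\Delta)\hookrightarrow(\C^\times)^k$. This does not change which elements of $G$ have central lifts. Transgression onto $\Ho^2(G,(\C^\times)^k)$ is surjective, so $G^*$ does map into the pushout over $G$. Its centre then lands in the centre, because the pushout is the image of $G^*$ times the central subgroup $(\C^\times)^k$.
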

	
	Recall that a group $G$ is said to be a central product of its normal subgroups $H$ and $K$ if $G=HK$, $[H, K]= 1$, and $H \cap K \neq 1$.
	
	\begin{lemma} \label{central product}
		Let $G$ be a central product of its subgroups $H$ and $K$ with $H^{\prime} \cap K^{\prime} \neq 1$. Then $G$ is not capable.      
	\end{lemma}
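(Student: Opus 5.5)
Since $Z^*(G)$ is the smallest central subgroup with capable quotient, it suffices to exhibit a non-trivial central subgroup $Z$ with $Z \subseteq Z^*(G)$. By Theorem~\ref{epicenter}, this is equivalent to showing that the inflation map
\[
\mathrm{inf}:\mathrm{H}^2(G/Z,\C^{\times})\to \mathrm{H}^2(G,\C^{\times})
\]
is surjective. Once this holds, $Z^*(G)\neq 1$, so $G \not\cong G/Z^*(G)$. Hence $G$ is not capable, because a group is capable exactly when $Z^*(G)=1$. The natural candidate is $Z = H'\cap K'$, which is non-trivial by hypothesis. It is central in $G$: $H'\subseteq H$ commutes with $K$, $K'\subseteq K$ commutes with $H$, and $G=HK$. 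It is also contained in $G'$.

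Rather than computing inflation directly, I would use the standard criterion for the epicentre: a central element $z$ lies in $Z^*(G)$ if and only if, for every central extension $1\to C\to E\xrightarrow{\pi} G\to 1$, every preimage of $z$ is central in $E$. (Equivalently, by Theorem~\ref{epicenter}, $z$ lies in the kernel of the map $G\wedge G\to G'$ restricted to the commutator pairs it determines.) So fix such an extension and $z\in H'\cap K'$. Let $\tilde H=\pi^{-1}(H)$ and $\tilde K=\pi^{-1}(K)$. Since $[H,K]=1$, we have $[\tilde H,\tilde K]\subseteq C$, which is central in $E$.

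The key step is to show $[\tilde H',\tilde K]=1$. By the three subgroups lemma, with $C$ central and $[\tilde K,\tilde H]\subseteq C$, we get $[[\tilde H,\tilde H],\tilde K]=1$. Concretely, the commutator map $\tilde H\times\tilde K\to C$ is bimultiplicative because $C$ is central. Hence it kills commutators in the first variable, since $C$ is abelian. Symmetrically, $[\tilde K',\tilde H]=1$.

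Now take a preimage $\tilde z$ of $z$. Since $z\in H'$, we can write $\tilde z=c_1 h$ with $h\in \tilde H'$ and $c_1\in C$. Since $z\in K'$, we can also write $\tilde z=c_2 k$ with $k\in \tilde K'$ and $c_2\in C$. The first expression shows $\tilde z$ commutes with $\tilde K$; the second shows it commutes with $\tilde H$. Since $E=\tilde H\tilde K$, the element $\tilde z$ is central in $E$. Thus $z\in Z^*(G)$, so $Z^*(G)\supseteq H'\cap K'\neq 1$, and $G$ is not capable.

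The main obstacle is justifying the epicentre criterion in the form used, since the paper only quotes Theorem~\ref{epicenter}. If needed, I would argue directly: suppose $G\cong\Delta/Z(\Delta)$ and apply the argument above to the central extension $1\to Z(\Delta)\to\Delta\to G\to 1$. Then any preimage of $z\neq 1$ is central in $\Delta$, so it lies in $Z(\Delta)$, which means $z=1$ in $G$. This is a contradiction and avoids the cohomological formulation entirely.
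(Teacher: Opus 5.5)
Your proof is correct, but it takes a different route from the paper's. The paper sets $Z=H'\cap K'$ and quotes \cite[Theorem 3.1]{central} for the surjectivity of $\mathrm{inf}\colon \mathrm{H}^2(G/Z,\C^{\times})\to\mathrm{H}^2(G,\C^{\times})$. It then concludes $1\neq Z\subseteq Z^*(G)$ from Theorem~\ref{epicenter}, so all the real content sits in the cited cohomological result.

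You avoid cohomology entirely. Bimultiplicativity of the commutator pairing $\tilde H\times\tilde K\to C$ gives $[\tilde H',\tilde K]=[\tilde K',\tilde H]=1$. Hence every lift of an element of $H'\cap K'$ is central in $E=\tilde H\tilde K$, and applying this to $\Delta\to\Delta/Z(\Delta)\cong G$ yields the contradiction directly. This argument is self-contained and elementary. It uses neither finiteness, nor normality of $H$ and $K$, nor $H\cap K\neq 1$.

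What the paper's route gives in addition is the surjectivity of inflation itself, which it reuses in the extra-special and Heisenberg computations of Section~\ref{EH}. Your method recovers this too. Run your last paragraph on the capable group $G/Z^*(G)\cong\Delta/Z(\Delta)$, which is again a product $\bar H\bar K$ with $[\bar H,\bar K]=1$. This shows that $H'\cap K'$ maps to $1$, i.e.\ $H'\cap K'\subseteq Z^*(G)$, and Theorem~\ref{epicenter} then gives the surjectivity.

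Two side remarks, neither of which affects your final argument:
\begin{enumerate}
\item The central-extension criterion you quote is the Beyl--Felgner--Schmid description of $Z^*(G)$, not Theorem~\ref{epicenter}.
\item The parenthetical about $G\wedge G\to G'$ is not a correct formulation and should be dropped. The exterior-square version is $Z^*(G)=\{g\in G : g\wedge x=1 \text{ for all } x\in G\}$.
\end{enumerate}
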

	
	\begin{proof}
		Let $Z = H^{\prime}$ $\cap$ $K^{\prime}$. By \cite[Theorem 3.1]{central}, inf : $\mathrm{H}^2\left(G / Z, \mathbb{C}^{\times}\right) \rightarrow \mathrm{H}^2\left(G, \mathbb{C}^{\times}\right)$ is a surjective map. Thus, the proof follows from Theorem \ref{epicenter}. 
		
	\end{proof}
	The next result follows from the proof of \cite[Chapter 4, Theorem 3.1]{karpilovsky2}.
	\begin{theorem} \label{capable groups}
		Let $G$ be a group. The following properties are equivalent:
		\begin{enumerate}
			\item[(i)] $G$ affords an irreducible faithful projective representation;
			\item[(ii)] there exists a group $H$ admitting an irreducible faithful ordinary representation such that $H/Z(H) \cong G$ .   
		\end{enumerate}
		Further, if $\delta_{irr}(H)$ exists, then $\tau_{irr}(G)$ also exists and satisfies $\tau_{irr}(G) \le \delta_{irr}(H)$. 
		
	\end{theorem}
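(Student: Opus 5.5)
We prove the two implications separately. The direction (ii)$\Rightarrow$(i) also yields the inequality $\tau_{irr}(G)\le \delta_{irr}(H)$, because the construction preserves degrees.

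\emph{(ii)$\Rightarrow$(i) and the bound.} Let $\psi\colon H\to \GL(V)$ be a faithful irreducible ordinary representation. Identify $G$ with $H/Z(H)$ and choose a section $\mu\colon G\to H$ with $\mu(1)=1$. Define $\rho(g)=\psi(\mu(g))$.
\begin{enumerate}
\item \emph{$\rho$ is projective.} For $g,h\in G$ we have $\mu(g)\mu(h)\mu(gh)^{-1}\in Z(H)$. By Schur's lemma $\psi$ is scalar on $Z(H)$. Hence $\rho(g)\rho(h)=\alpha(g,h)\rho(gh)$ for the cocycle $\alpha(g,h)=\psi\bigl(\mu(g)\mu(h)\mu(gh)^{-1}\bigr)$, viewed as a scalar.
\item \emph{$\rho$ is irreducible.} Every $\psi(x)$ with $x\in H$ is a scalar multiple of some $\rho(g)$. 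So any $\rho$-invariant subspace is $\psi(H)$-invariant, and $\rho$ is irreducible.
\item \emph{$\rho$ is faithful.} Suppose $\rho(g)=\lambda I_V$. Then $\psi(\mu(g))$ is scalar. Since $\psi$ is faithful and irreducible, $\mu(g)\in Z(H)$ by the fact quoted in the proof of Lemma~\ref{one dim direct sum in non-abelian group}(2) (\cite[Lemma 2.27]{isaacs}). Hence $g=1$.
\end{enumerate}
Since $\deg\rho=\deg\psi$, applying this to a faithful irreducible $\psi$ of minimal degree gives $\tau_{irr}(G)\le\delta_{irr}(H)$.

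\emph{(i)$\Rightarrow$(ii).} Let $\rho\colon G\to\GL(V)$ be a faithful irreducible $\alpha$-representation.
\begin{enumerate}
\item \emph{Reduce to a finite-order cocycle.} Every class in $\Ho^2(G,\C^\times)$ has a representative with values in the group $\mu_m$ of $m$-th roots of unity, with $m=|G|$ (or $m=|G|\cdot\exp M(G)$). This uses the standard normalization $\det$-trick, or the finiteness of $M(G)$. By Lemma~\ref{projectively equivalent to an ordinary representation} and Lemma~\ref{projectively equivalent}, we may replace $\rho$ by a projectively equivalent $\alpha$-representation with $\alpha$ taking values in $\mu_m$. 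This does not affect faithfulness or irreducibility.
\item \emph{Define the group $H$.} Set $H=\{\lambda\rho(g)\mid \lambda\in\mu_m,\ g\in G\}\subseteq\GL(V)$. Because $\alpha$ has values in $\mu_m$, $H$ is closed under products; it is finite, hence a group.
\item \emph{The natural representation of $H$.} The inclusion $H\hookrightarrow\GL(V)$ is faithful. It is irreducible, since $H$ has the same invariant subspaces as $\rho(G)$.
\item \emph{The map $\pi\colon H\to G$.} Define $\pi(\lambda\rho(g))=g$.
\begin{itemize}
\item It is well defined: if $\lambda\rho(g)=\lambda'\rho(h)$, then $\rho(g)\rho(h)^{-1}$ is a scalar multiple of $\rho(gh^{-1})$, so $gh^{-1}\in\ker\rho=1$.
\item It is a surjective homomorphism by the cocycle relation.
\item Its kernel is exactly the set of scalar matrices in $H$.
\end{itemize}
\item \emph{Identify the kernel with $Z(H)$.} Scalars are central, so the kernel lies in $Z(H)$. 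Conversely, by Schur's lemma any element of $Z(H)$ acts as a scalar on the irreducible module $V$. Hence $\ker\pi=Z(H)$ and $H/Z(H)\cong G$.
\end{enumerate}

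The only non-routine point is step~1 of (i)$\Rightarrow$(ii): obtaining a cocycle of finite order so that $H$ is finite. Once that is in place, the rest is bookkeeping with Schur's lemma.
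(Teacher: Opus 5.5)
Your proof is correct in both directions, and the bound $\tau_{irr}(G)\le\delta_{irr}(H)$ follows as you say, because your construction preserves degree. The paper gives no argument of its own; it defers to Karpilovsky. Its proof of Theorem~\ref{projective embedding degrees} shows the intended route, which produces the central extension differently from yours. There one lifts $\rho$ to an ordinary representation $\rho^*$ of a representation group $G^*$ and takes $H=G^*/K_{\rho^*}$. You instead normalize $\alpha$ to take values in $\mu_m$ and then take $H=\mu_m\cdot\rho(G)\subseteq\GL(V)$ directly; this is in effect the image of the $\mu_m$-central extension of $G$ defined by $\alpha$.

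Your version is more self-contained. It needs only that every class has a finite-order representative, not the existence of $G^*$; the determinant trick even gives values in $\mu_{\deg\rho}$.

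What the $G^*$ route buys is a better $H$. Its centre is $A/K_{\rho^*}$, a quotient of $M(G)$ lying in $H'$, so $H$ is a covering group of $G$. Your $H$ need not be one. Take $G=(\Z/2\Z)^2$ with $e_1\mapsto\left(\begin{smallmatrix}0&1\\1&0\end{smallmatrix}\right)$, $e_2\mapsto\mathrm{diag}(1,-1)$, and $m=|G|=4$. Then $|H|=16$ and $Z(H)=\mu_4 I\not\subseteq H'=\{\pm I\}$, while $|M(G)|=2$.

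This does not matter for the equivalence proved here. It is, however, exactly the extra information the paper later uses in Theorem~\ref{projective embedding degrees} and Corollary~\ref{Elementary abelian Schur multiplier}, where $H$ must be a covering with $Z(H)\cong\Z/p\Z$.
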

	
	\begin{lemma}[\cite{Beyl}, Corollary 8.2] \label{capability of extra special $p$-group}
		An extra-special $p$-group is capable if and only if it is either a dihedral group of order $8$ or of order $p^3$ having exponent $p > 2$.    
	\end{lemma}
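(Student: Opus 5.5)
The plan is to reduce capability to a statement about the inflation map and then split by the order of $G$. Let $G$ be extra-special of order $p^{2n+1}$, so $Z(G)=G'$ has order $p$ and $G/Z(G)$ is elementary abelian of rank $2n$. Every non-trivial central subgroup of $G$ equals $Z(G)$. Hence $G$ is capable iff $Z^*(G)=1$, iff $Z(G)\not\subseteq Z^*(G)$. By Theorem \ref{epicenter}, this holds iff
\[
\mathrm{inf}:\mathrm{H}^2(G/Z(G),\C^\times)\to \mathrm{H}^2(G,\C^\times)
\]
is \emph{not} surjective.

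\textbf{Case $n\ge 2$.} Here I would use Lemma \ref{central product} directly. Write $G$ as a central product $HK$, where $H$ is extra-special of order $p^3$ and $K$ is extra-special of order $p^{2n-1}$, amalgamated over the common centre. This is the standard decomposition of extra-special groups. Then $H'=K'=Z(G)\neq 1$, so $H'\cap K'\neq 1$ and $G$ is not capable.

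\textbf{Case $|G|=p^3$.} Here I would count orders using the exact sequence of Lemma \ref{exact sequence} with $A=Z(G)\subseteq G'\cap Z(G)$. Since $\mathrm{tra}$ is injective, $|\operatorname{Im}(\mathrm{inf})| = |\mathrm{H}^2(G/Z,\C^\times)|/p$. Because $G/Z\cong C_p\times C_p$ has Schur multiplier $C_p$, the image of $\mathrm{inf}$ is trivial. So $G$ is capable iff $M(G)\neq 1$. I would then invoke the known Schur multipliers of the groups of order $p^3$:
\begin{itemize}
\item $M(Q_8)=1$, so $Q_8$ is not capable;
\item for $p$ odd, the extra-special group of exponent $p^2$ has trivial multiplier, so it is not capable;
\item $M(D_8)\cong C_2$, so $D_8$ is capable;
\item the Heisenberg group of exponent $p>2$ has $M\cong C_p\times C_p$, so it is capable.
\end{itemize}
As a sanity check, one can also exhibit witnesses: $D_8\cong D_{16}/Z(D_{16})$, and the exponent-$p$ group of order $p^3$ is the central quotient of a suitable group of order $p^4$ of maximal class.

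The main obstacle is the input on Schur multipliers for the order-$p^3$ groups, which is not proved in the paper so far. I would either cite the standard computations (e.g. from \cite{karpilovsky}), or argue directly for the two non-capable cases. In those cases $Z(G)$ is contained in the subgroup generated by a $p$-th power (respectively a square), and for $x\in G$ with $x^p$ generating $Z(G)$ one checks that a putative $\Delta$ with $\Delta/Z(\Delta)\cong G$ forces $[\tilde x^p,\tilde y]=[\tilde x,\tilde y]^p=1$ for lifts $\tilde x,\tilde y\in\Delta$. This is a contradiction because $x^p\neq 1$ lies in the centre of $G$ but would then have to be trivial in $\Delta/Z(\Delta)$.
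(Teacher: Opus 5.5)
Your main argument is correct, and it takes a different route from the paper: the paper gives no proof at all and simply quotes \cite[Corollary 8.2]{Beyl}. You instead derive the statement from tools stated earlier in the paper.
\begin{enumerate}
\item The epicentre criterion (Theorem~\ref{epicenter}) turns capability of an extra-special group into non-surjectivity of $\mathrm{inf}\colon \mathrm{H}^2(G/Z(G),\C^\times)\to\mathrm{H}^2(G,\C^\times)$.
\item Lemma~\ref{central product} disposes of all orders $p^{2n+1}$ with $n\ge 2$. This is exactly the argument the paper itself uses later when treating $\tau$ for extra-special groups of order $p^{2n+1}$, $n>1$.
\item For order $p^3$, the exact sequence of Lemma~\ref{exact sequence} together with $M(C_p\times C_p)\cong C_p$ gives $\operatorname{Im}(\mathrm{inf})=1$. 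So capability becomes equivalent to $M(G)\neq 1$.
\end{enumerate}
This makes the result self-contained relative to the paper's own toolkit. The price is importing four standard Schur multiplier values instead of one citation.

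The one part that does not work as written is your optional direct argument in the last paragraph. With $c=[\tilde x,\tilde y]\in Z_2(\Delta)$, commutator calculus only gives $[\tilde x^p,\tilde y]=c^p[c,\tilde x]^{\binom p2}$, and you give no reason why $c^p=1$.

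For $p$ odd this can be repaired by choosing $y$ with $y^p=1$ in $G$. Then $\tilde y^p\in Z(\Delta)$, and $1=[\tilde x,\tilde y^p]=c^p[c,\tilde y]^{\binom p2}=c^p$. The correction term vanishes because $p\mid\binom p2$ and $[c,\tilde y]^p=[c^p,\tilde y]=1$, as $c^p\in Z(\Delta)$.

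For $Q_8$, however, every non-central element has order $4$, and the correction term $[c,\tilde x]$ need not vanish. So your identity $[\tilde x^2,\tilde y]=[\tilde x,\tilde y]^2=1$ is unjustified there.

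A uniform elementary substitute works for both non-capable groups. Suppose $G=\langle u,v\rangle$ and $1\neq u^m=v^n$. Then lifts satisfy $\tilde u^m=\tilde v^n z$ with $z\in Z(\Delta)$. So $\tilde u^m$ centralizes $\tilde u$, $\tilde v$ and $Z(\Delta)$, hence all of $\Delta=\langle\tilde u,\tilde v\rangle Z(\Delta)$, which forces $u^m=1$. Apply this to $i^2=j^2$ in $Q_8$. In the exponent-$p^2$ group of order $p^3$ ($p$ odd), apply it to $x^p=(xy)^p$ with $y$ of order $p$, noting $G=\langle x,xy\rangle$.

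Since your primary route cites the multipliers anyway, this affects only the fallback, not the proof.
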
   
	
	
	
	\section{Results on Faithful Projective Representations of $G$} \label{faithful_projective}
	Our first main result provides a cohomological characterization of the existence of faithful irreducible projective representations.
	
	\begin{theorem} \label{cohomological characterization}
		A p-group $G$ has a faithful irreducible $\alpha$-representation if and only if $[\alpha] \notin \mathrm {Im}\big(\operatorname{inf} : \mathrm{H}^2\left(G / N, \mathbb{C}^{\times}\right) \rightarrow \mathrm{H}^2\left(G, \mathbb{C}^{\times}\right)\big)$ for any non-trivial central subgroup $N$ of $G$.    
	\end{theorem}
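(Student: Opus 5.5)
The plan is to prove each direction separately. The forward direction is a Schur's lemma argument. The converse goes through the $\alpha$-regular elements of $Z(G)$, using Lemma~\ref{faithful projective representation} and Lemma~\ref{restriction}.

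\emph{($\Rightarrow$)} Suppose $[\alpha]=\inf([\beta])$ for some $[\beta]\in \mathrm{H}^2(G/N,\C^\times)$ with $1\neq N\subseteq Z(G)$. By Lemma~\ref{projectively equivalent to an ordinary representation} and Lemma~\ref{projectively equivalent}, we may replace $\alpha$ by the inflated cocycle $\alpha'(x,y)=\beta(xN,yN)$, where $\beta$ is chosen normalized, i.e.\ $\beta(N,\cdot)=\beta(\cdot,N)=1$. Let $\rho$ be any irreducible $\alpha'$-representation, and take $n\in N$ and $g\in G$. Then
\[
\rho(n)\rho(g)=\beta(N,gN)\rho(ng)=\rho(ng)=\rho(gn)=\beta(gN,N)\rho(gn)=\rho(g)\rho(n).
\]
So $\rho(n)$ commutes with $\rho(G)$, and Schur's lemma makes it scalar. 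Hence $N\subseteq\ker\rho$, and no irreducible $\alpha$-representation is faithful.

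\emph{($\Leftarrow$)} Suppose $G$ has no faithful irreducible $\alpha$-representation. By Lemma~\ref{faithful projective representation}(2), since $G$ is nilpotent, $Z(G)$ contains a nontrivial $\alpha$-regular element $z$.

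Next I check that the $\alpha$-regular elements of $Z(G)$ form a subgroup. For central $z$, the map $g\mapsto \alpha(g,z)\alpha(z,g)^{-1}$ is a homomorphism $G\to\C^\times$, and it depends multiplicatively on $z$; both facts are standard consequences of the cocycle identity. Hence we can choose $N\le\langle z\rangle$ of order $p$ consisting of $\alpha$-regular central elements.

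I then work in the twisted group algebra $\C^\alpha G$ with basis $\{\bar g\}$. Since $H^2(N,\C^\times)=0$ for cyclic $N$, I first adjust $\alpha$ by a coboundary so that $n\mapsto\bar n$ is a group homomorphism on $N$. Regularity of $n$ gives $\bar n\bar g=\bar g\bar n$, so each $\bar n$ is central. The step I expect to be the main obstacle is carrying out this normalization while keeping regularity intact. Regularity is a property of the class, since the ratio $\alpha(g,z)/\alpha(z,g)$ is unchanged under coboundaries for central $z$, but the details need to be written out carefully.

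Finally I pick a transversal $t$ of $G/N$ with $t(N)=1$. For $g=t(x)n$ set $\tilde g=\bar{t(x)}\,\bar n$, which is a scalar multiple of $\bar g$. Write $t(x)t(y)=t(xy)k$ with $k\in N$. Using centrality of $\bar n,\bar m$ and multiplicativity on $N$, for $h=t(y)m$ we get
\[
\tilde g\,\tilde h=\alpha(t(x),t(y))\,\bar{t(xy)}\,\bar k\,\bar n\,\bar m=\alpha(t(x),t(y))\,\widetilde{gh}.
\]
So the cocycle attached to the basis $\{\tilde g\}$ is cohomologous to $\alpha$ and equals $\beta(gN,hN)$, where $\beta(x,y):=\alpha(t(x),t(y))$. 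Associativity of $\C^\alpha G$ restricted to the $\tilde g$ shows that $\beta$ is a cocycle on $G/N$. Therefore $[\alpha]=\inf[\beta]$ with $N$ a nontrivial central subgroup, which is the contrapositive of the converse.
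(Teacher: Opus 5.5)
Your first direction is correct and is essentially the paper's argument, with Schur's lemma in place of Lemma~\ref{faithful projective representation}(1). The normalization you flagged as the main obstacle is harmless, since rescaling a central element of $\C^\alpha G$ keeps it central.

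\textbf{The error is in the final display.} From $\overline{t(x)}\,\overline{t(y)}=\alpha(t(x),t(y))\,\overline{t(xy)k}$ you pass to $\alpha(t(x),t(y))\,\overline{t(xy)}\,\bar k$. But $\overline{t(xy)}\,\bar k=\alpha(t(xy),k)\,\overline{t(xy)k}$. Your normalizations make $\alpha$ trivial on $N\times N$ and symmetric on $G\times N$, but not trivial on $G\times N$. The correct identity is
\[
\tilde g\,\tilde h=\alpha\bigl(t(x),t(y)\bigr)\,\alpha\bigl(t(xy),k\bigr)^{-1}\,\widetilde{gh},
\]
so your $\beta(x,y)=\alpha(t(x),t(y))$ is in general not a cocycle on $G/N$.

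For instance, let $G=\Z/4\Z\times\Z/2\Z$ and $N=\langle(2,0)\rangle$, and let $t$ have image $\{(0,0),(1,0),(0,1),(1,1)\}$. Set $\alpha(g,h)=\nu(g)\nu(h)\nu(g+h)^{-1}$ with $\nu(2,1)=-1$ and $\nu=1$ elsewhere. All your normalizations hold. Yet $\alpha(t(x),t(y))$ equals $-1$ exactly when $\{t(x),t(y)\}=\{(1,0),(1,1)\}$. It therefore violates the cocycle identity at $x=(1,0)+N$, $y=(1,1)+N$, $z=(0,1)+N$: one side is $-1$ and the other is $1$.

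\textbf{The fix stays inside your framework.} Define $\beta$ by $\overline{t(x)}\,\overline{t(y)}=\beta(x,y)\,\widetilde{t(x)t(y)}$, i.e.\ $\beta(x,y)=\alpha(t(x),t(y))\,\alpha(t(xy),k(x,y))^{-1}$. This still depends only on $(x,y)$. Centrality of $\bar n,\bar m$ and multiplicativity on $N$ then give $\tilde g\,\tilde h=\beta(gN,hN)\,\widetilde{gh}$, and your associativity argument finishes the proof.

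\textbf{Comparison with the paper.} With this repair your proof is correct, and in the hard direction it differs from the paper's. The paper takes a non-faithful $\rho\in\mathrm{Irr}^\alpha(G)$ and sets $N=\ker\rho\cap Z(G)$. It notes that $\alpha$ restricts to a coboundary on $N$ and that $\alpha(x,n)=\alpha(n,x)$. It then concludes from the Iwahori--Matsumoto exact sequence $\mathrm{H}^2(G/N,\C^\times)\to\mathrm{H}^2(G,\C^\times)\to\mathrm{H}^2(N,\C^\times)\oplus\mathrm{Hom}(G\otimes N,\C^\times)$.

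You instead extract a regular central element from Lemma~\ref{faithful projective representation}(2) and shrink to $N$ of order $p$, so that $\mathrm{H}^2(N,\C^\times)=0$ removes the restriction obstruction. You then construct the inflating cocycle by hand, in effect proving the needed exactness for cyclic $N$. The paper's route is shorter and yields the canonical subgroup $\ker\rho\cap Z(G)$. Yours is self-contained apart from Karpilovsky's lemma and shows $N$ can be taken of order $p$.
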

	
	\begin{proof}
		Let $\rho \in \mathrm{Irr}^\alpha(G)$ be not faithful. Then, for $N= \operatorname{ker}(\rho) \cap Z(G)$, we have the exact sequence
		$$
		\mathrm{H}^2(G / N, \mathbb{C}^{\times}) \xrightarrow{\text {inf}} \mathrm{H}^2(G, \mathbb{C}^{\times}) \xrightarrow{\chi} \mathrm{H}^2(N, \mathbb{C}^{\times}) \oplus \mathrm{Hom}(G \otimes N, \mathbb{C}^{\times})
		$$
		where $\chi=(\operatorname{res}, \psi)$ as defined by Iwahori and Matsumoto \cite{matsumoto}. To be more precise, $\operatorname{res}: \mathrm{H}^2(G, \mathbb{C}^{\times}) \rightarrow \mathrm{H}^2(N, \mathbb{C}^{\times})$ is the restriction homomorphism and $\psi: \mathrm{H}^2(G, \mathbb{C}^{\times}) \rightarrow \mathrm{Hom}(G \otimes N, \mathbb{C}^{\times})$ is defined as $\psi(\xi)(\overline{x}, n)=f(x, n)f(n, x)^{-1}$ for all $\overline{x}=x G^{\prime} \in G / G^{\prime}$ and $n \in N$, where $\xi\in \mathrm{H}^2(G, \mathbb{C}^{\times})$ and $f$ is a cocycle representative of $\xi$. 
		Since $\alpha|_{\operatorname{ker}(\rho) \times \operatorname{ker}(\rho)}$ is a coboundary,  $\operatorname{res}([\alpha])$ is trivial. We also have $\alpha(x, n)= \alpha(n, x)$ $\forall$ $n$ $\in$ $N$, $x\in G$. Therefore, $[\alpha] \in \operatorname{ker}(\psi)$. Thus, $[\alpha] \in \mathrm{ker}(\operatorname{\chi}) = \mathrm{Im} (\operatorname{inf})$. 
		
		Conversely, let $N$ be a non-trivial central subgroup of $G$, $[\alpha] = \operatorname{inf}([\beta])$ for some $[\beta] \in \operatorname{H}^2(G/N, \mathbb{C}^{\times})$ and $\rho\in \mathrm{ Irr}^\alpha(G)$. 
		Then $\alpha(g,h)= \beta(gN,hN)$ $\forall$ $g,h \in G$. If $a \in N$, then $\alpha(a,g) = \alpha(g,a)= 1$ $\forall$ $g \in G$ and so $a$ is an $\alpha$-regular element of $Z(G)$. Hence, $\rho$ cannot be faithful, due to Lemma \ref{faithful projective representation}$(1)$.
		
	\end{proof}
	
	
	\begin{corollary} \cite[Chapter 4, Theorem 3.1]{karpilovsky2} \label{non-capable groups}
		Let $G$ be a non-capable group. Then, $G$ does not have a faithful irreducible projective representation.    
	\end{corollary}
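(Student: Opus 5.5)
The plan is to derive the corollary from Theorem \ref{epicenter} and the second half of the proof of Theorem \ref{cohomological characterization}. Since $G$ is non-capable, its epicentre $Z^*(G)$ is non-trivial: if $Z^*(G)=1$, then $G \cong G/Z^*(G)$ would be capable. Put $N = Z^*(G)$, a non-trivial central subgroup. By Theorem \ref{epicenter}, applied with $Z = N \subseteq Z^*(G)$, the inflation map $\mathrm{inf} : \mathrm{H}^2(G/N,\mathbb{C}^{\times}) \to \mathrm{H}^2(G,\mathbb{C}^{\times})$ is surjective. Hence every class $[\alpha]\in \mathrm{H}^2(G,\mathbb{C}^\times)$ lies in the image of inflation from a quotient by a non-trivial central subgroup.

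For a $p$-group the conclusion is then immediate from Theorem \ref{cohomological characterization}. However, the corollary is stated for an arbitrary finite group, so I would instead use the converse direction of that proof, which never used the $p$-group hypothesis.

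Fix $[\alpha]$. By Lemma \ref{projectively equivalent to an ordinary representation} and Lemma \ref{projectively equivalent}, we may replace $\alpha$ by a cohomologous cocycle of the form $\alpha(g,h)=\beta(gN,hN)$, where $\beta$ is normalized. Faithfulness is unchanged by this replacement.

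Take any $1\neq a\in N$. Then $\alpha(a,g)=\beta(N,gN)=1=\alpha(g,a)$ for all $g$, so $a$ is a non-trivial $\alpha$-regular element of $Z(G)$. By Lemma \ref{faithful projective representation}(1), no $\rho\in\mathrm{Irr}^\alpha(G)$ is faithful. Since $[\alpha]$ was arbitrary, $G$ has no faithful irreducible projective representation.

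The only delicate points are the following.
\begin{enumerate}
\item Checking that Lemma \ref{faithful projective representation}(1) holds for arbitrary finite groups. It is cited from Karpilovsky without a nilpotence hypothesis, so this is fine.
\item Arranging that $\beta$ is normalized, so that $\beta(N,gN)=1$. This follows by adjusting $\beta$ by a coboundary.
\end{enumerate}

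Alternatively, the statement follows directly from Theorem \ref{capable groups}. If $G$ had a faithful irreducible projective representation, then $G\cong H/Z(H)$ for some group $H$, so $G$ would be capable, which is a contradiction.
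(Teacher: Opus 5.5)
Your proposal is correct and follows the paper's proof: Theorem~\ref{epicenter} gives surjectivity of $\mathrm{inf}$ from $G/Z^*(G)$, and then the $\alpha$-regularity argument from the second paragraph of the proof of Theorem~\ref{cohomological characterization} is combined with Lemma~\ref{faithful projective representation}(1). You also make explicit two points the paper leaves implicit, namely that $Z^*(G)\neq 1$ because $G$ is non-capable and that $\beta$ may be taken normalized; your alternative via Theorem~\ref{capable groups} is also valid and is essentially the argument of the cited source.
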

	
	\begin{proof}
		By Theorem \ref{epicenter}, inf : $\mathrm{H}^2(G/Z^*(G),\mathbb{C}^{\times})\rightarrow \mathrm{H}^2(G,\mathbb{C}^{\times})$ is surjective. Therefore, the result follows by the same argument used in the second paragraph of the proof of Theorem \ref{cohomological characterization}. 
		
	\end{proof}
	
	
	Let $G$ be a $p$-group with a subgroup $A$ such that $A \subseteq Z(G) \cap G^\prime$. If $[\alpha] \in \mathrm{Im} (\mathrm{inf})$, then $\exists$ $\beta \in \mathrm{Z}^2(G/A, \C^{\times})$ such that $\alpha(g,h) = \beta(gA,hA)$ $\forall g,h \in G$. Recall the map $\phi$ from Theorem \ref{bijective correspondence via inflation} which gives a bijection between $\mathrm{Irr}^\alpha(G)$ and $\bigcup_{\{[\beta] \in \mathrm{H}^2(G/A, \mathbb{C}^{\times}) \mid \mathrm{inf}([\beta]) = [\alpha]\}} 
	\mathrm{Irr}^\beta(G/A)$. Under these assumptions and notation, we have the following result.
	\begin{theorem} \label{direct_sum} 
		Suppose $\rho_i \in \mathrm{Irr}^\alpha(G)$ and  $\phi(\rho_i)= \tilde\rho_i$ for $i=1,...,k$. Then $\tilde\rho_i$ is a $\beta\mathrm{tra}(\chi_i)$-representation of $G/A$ for some $\chi_i\in \mathrm{Hom}(A,\mathbb{C}^{\times})$.
		Furthermore, if one of the following conditions is satisfied, 
		\begin{enumerate}
			\item $Z(G)$ is cyclic,
			\item $\big(\cap_{i=1}^k \operatorname{ker}(\tilde\rho_i)\big)\cap \big(Z(G)/A \big)= 1$,
		\end{enumerate}
		then $\oplus_{i=1}^k \rho_i$ is faithful on $G$ if and only if $\oplus_{i=1}^k \chi_i$ is faithful on $A$.
		
	\end{theorem}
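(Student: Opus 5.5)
The first assertion comes straight out of the bijection $\phi$. By Remark \ref{remark for bijective correspondence via inflation}(1), with $\alpha(g,h)=\beta(gA,hA)$, the map $\phi$ sends $\mathrm{Irr}^\alpha(G)$ onto $\bigcup_{\chi\in\mathrm{Hom}(A,\C^\times)}\mathrm{Irr}^{\beta\mathrm{tra}(\chi)}(G/A)$. Hence each $\tilde\rho_i$ lies in some $\mathrm{Irr}^{\beta\mathrm{tra}(\chi_i)}(G/A)$.

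To pin down $\chi_i$, I will use the explicit inverse in Remark \ref{remark for bijective correspondence via inflation}(2): $\rho_i(a\mu(gA))=\chi_i(a)\tilde\rho_i(gA)$, so in particular $\rho_i|_A=\chi_i$ as a scalar map. Because $\phi$ is a bijection, this recovers our given $\rho_i$, up to the fixed choice of representatives. Writing $\rho=\oplus_i\rho_i$, we get $\rho(a)=\mathrm{diag}(\chi_1(a)I,\dots,\chi_k(a)I)$ for $a\in A$.

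For the equivalence, one direction is immediate in both cases. If $\rho$ is faithful on $G$, then no non-trivial $a\in A$ has $\rho(a)$ scalar. For $a\in A$, $\rho(a)$ is scalar exactly when $\chi_1(a)=\dots=\chi_k(a)$. So faithfulness of $\oplus\chi_i$ on $A$ must be read in the projective sense, i.e.\ its projective kernel $\{a:\chi_1(a)=\dots=\chi_k(a)\}$ is trivial. I will state this interpretation explicitly, in line with the boxed convention of the paper. With this reading, $\rho|_A$ is projectively faithful if and only if $\oplus\chi_i$ is, which gives the "only if" direction.

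For the converse, assume $\oplus\chi_i$ is faithful on $A$, so $\ker\rho\cap A=1$.

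Case (1): $Z(G)$ is cyclic. I want to apply Corollary \ref{restriction on A is faithful}, but that is stated for irreducible $\rho$. Its argument nevertheless works for any $\alpha$-representation, provided one knows that $\ker\rho\ne1$ forces $\ker\rho\cap Z(G)\ne1$. So I will first show $\ker\rho\cap Z(G)\neq 1$ for the direct sum. I would try to argue that $\ker\rho=\bigcap_i\ker\rho_i$ is a normal subgroup of the nilpotent group $G$ on which $\alpha$ behaves as needed, adapting \cite[Chapter 10, Lemma 1.7]{karpilovsky2}. Alternatively, I could lift everything to ordinary representations of a representation group, where the direct sum is ordinary and the standard fact that non-trivial normal subgroups of nilpotent groups meet the centre applies.

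Once $\ker\rho\cap Z(G)\ne1$ is known, cyclicity of $Z(G)$ gives that this intersection contains the unique subgroup of order $p$. That subgroup lies inside the non-trivial subgroup $A$, contradicting $\ker\rho\cap A=1$.

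Case (2): suppose $1\ne z\in\ker\rho\cap Z(G)$. Write $z=a\mu(gA)$ with $gA\in Z(G)/A$. Then $\rho_i(z)=\chi_i(a)\tilde\rho_i(gA)$ is scalar for each $i$, so $gA\in\bigcap_i\ker\tilde\rho_i$, and hence $gA=1$ by hypothesis. Thus $z\in A$, contradicting faithfulness on $A$. Combined with the centre-intersection fact from case (1), this shows $\ker\rho=1$.

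The main obstacle is justifying that the kernel of a reducible $\alpha$-representation of a nilpotent group meets $Z(G)$ non-trivially. I would handle this via the representation-group lift and Theorem \ref{bijective correspondence}.
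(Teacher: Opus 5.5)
Your proposal is correct and follows the paper's proof. The first assertion comes via Remark \ref{remark for bijective correspondence via inflation}(1); case (1) uses $\rho_i|_A=\chi_i$ together with the cyclic-centre argument behind Corollary \ref{restriction on A is faithful}; and case (2) writes $z=a\mu(gA)$ exactly as the paper does, with the same projective reading of faithfulness of $\oplus_i\chi_i$ on $A$.

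The obstacle you flag, which the paper skips by applying results stated only for irreducible $\rho$ to the direct sum, is immediate. $\ker\rho$ is the kernel of the homomorphism $G\to\PGL(V)$, hence normal in the $p$-group $G$, and so meets $Z(G)$ non-trivially whenever $\ker\rho\neq 1$. Do drop the claim $\ker\rho=\bigcap_i\ker\rho_i$, which is false in general: only $\subseteq$ holds, and indeed $A\subseteq\bigcap_i\ker\rho_i$ always.
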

	
	
	\begin{proof}
		By Remark \ref{remark for bijective correspondence via inflation}$(1)$, $\tilde\rho_i$ is a $\beta\mathrm{tra}(\chi_i)$ representation of $G/A$ for some $\chi_i \in \mathrm{Hom}(A,\mathbb{C}^{\times})$.
		\item 
		\begin{enumerate}
			\item It is easy to see that $\chi_i= \rho_i|_A$. The result now follows from Corollary \ref{restriction on A is faithful}. 
			\item 
			Consider the central extension
			
			\[
			1 \to A \to Z(G) \to Z(G)/A \to 1
			\]
			Let $\mu: Z(G)/A \to Z(G)$ be a section of $Z(G) \twoheadrightarrow Z(G)/A$.
			Let $x \in Z(G)$ such that $(\oplus_{i=1}^k \rho_i)(x)=\lambda I$ for some $\lambda \in \mathbb{C}^{\times}$. Then $x= a\mu(gA)$
			for some $gA \in Z(G)/A$, $a\in A$ and by Remark \ref{remark for bijective correspondence via inflation}(2), $\chi_i(a) \tilde\rho_i(gA) = \lambda I$ $\forall$ $1 \leq i \leq k$. From the given hypothesis, it follows that $g \in A$ and so $x \in A$. Thus, $\oplus_{i=1}^k \rho_i$ is faithful on $Z(G)$ if and only if $\oplus_{i=1}^k \chi_i$ is a faithful on $A$. Hence, the result follows from Lemma \ref{restriction}$(2)$. 
			
		\end{enumerate}
	\end{proof}
	
	\subsection{Direct products} Our next aim is to describe the faithful irreducible projective representations of the direct product of groups. 
	For direct products of groups $G$ satisfying $Z(G) \subseteq [G, G]$, we characterize cocycles $\alpha$ for which the corresponding irreducible $\alpha$-representations are faithful.   
	\begin{theorem}[\cite{karpilovsky}, Chapter 2, Theorem 3.13] \label{schur multiplier of direct product}
		If $G = H \times K$, then 
		$$
		\mathrm{H}^2(G, \mathbb{C}^{\times}) \cong\mathrm{H}^2(H, \mathbb{C}^{\times}) \times \mathrm{H}^2(K, \mathbb{C}^{\times}) \times \mathrm{Hom}(H \otimes K, \mathbb{C}^{\times}).$$ 
	\end{theorem}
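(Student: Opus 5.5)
We prove the Künneth-type decomposition by constructing an explicit homomorphism
\[
\Theta:\mathrm{H}^2(G,\C^\times)\longrightarrow \mathrm{H}^2(H,\C^\times)\times\mathrm{H}^2(K,\C^\times)\times\mathrm{Hom}(H\otimes K,\C^\times)
\]
and then exhibiting its inverse. Identify $H$ and $K$ with $H\times 1$ and $1\times K$ inside $G$. For a cocycle $\alpha$ define
\[
\Theta([\alpha])=\bigl([\alpha|_{H\times H}],\,[\alpha|_{K\times K}],\,\psi_\alpha\bigr),\qquad \psi_\alpha(\bar h,\bar k)=\alpha(h,k)\alpha(k,h)^{-1}.
\]
This parallels the map $\psi$ of Iwahori--Matsumoto used in the proof of Theorem \ref{cohomological characterization}.

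The first step is to check that $\Theta$ is well defined.
\begin{itemize}
\item Restriction is a homomorphism on cohomology, so the first two components are fine.
\item If $\alpha$ is multiplied by a coboundary $\delta t(x,y)=t(x)t(y)t(xy)^{-1}$, then $\psi_\alpha$ is unchanged, because $hk=kh$.
\item The cocycle identity, applied to commuting elements, shows that $h\mapsto\psi_\alpha(h,k)$ and $k\mapsto\psi_\alpha(h,k)$ are homomorphisms. This is the standard fact that the commutator pairing of a cocycle is bimultiplicative on commuting pairs.
\item Because $\C^\times$ is abelian, $\psi_\alpha$ kills $H'$ and $K'$, so it factors through $H/H'\otimes K/K'=H\otimes K$.
\end{itemize}
Since each component is multiplicative in $\alpha$, $\Theta$ is a homomorphism.

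For surjectivity, take classes $[\beta]$, $[\gamma]$ and a bilinear $f$. Define
\[
\alpha\bigl((h_1,k_1),(h_2,k_2)\bigr)=\beta(h_1,h_2)\,\gamma(k_1,k_2)\,f(\bar h_2,\bar k_1)^{-1}.
\]
The factor $f(\bar h_2,\bar k_1)$ is the inflation of the 2-cocycle $(x,y)\mapsto f(\pi_H(y),\pi_K(x))$, and bilinearity makes it a cocycle. Then $\alpha|_H=\beta$, $\alpha|_K=\gamma$, and $\alpha(h,k)\alpha(k,h)^{-1}=1\cdot f(\bar h,\bar k)$. Hence $\Theta([\alpha])=([\beta],[\gamma],f)$; only the sign convention for $f$ needs care.

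Injectivity is the main obstacle. Suppose $\alpha|_H$ and $\alpha|_K$ are coboundaries and $\psi_\alpha$ is trivial. We must show $\alpha$ is a coboundary. The plan is to adjust $\alpha$ by a coboundary so that $\alpha|_{H\times H}=1$ and $\alpha|_{K\times K}=1$. Then choose a normalized section $\rho(hk)=\rho(h)\rho(k)$ in the $\alpha$-twisted group algebra $\C^\alpha G$, which is equivalent to rescaling $\alpha$ by the coboundary of $t(hk)=\alpha(h,k)$.

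After this normalization the elements $\rho(h)$ span a copy of $\C H$ and the $\rho(k)$ span a copy of $\C K$. The condition $\psi_\alpha=1$ says $\rho(h)\rho(k)=\rho(k)\rho(h)$. Therefore
\[
\rho(hk)\rho(h'k')=\rho(h)\rho(h')\rho(k)\rho(k')=\rho(hh'kk'),
\]
so $\alpha\equiv1$. This proves injectivity and completes the isomorphism.

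An alternative route for injectivity is to compare orders. The Lyndon--Hochschild--Serre spectral sequence, or the classical Schur formula $M(H\times K)\cong M(H)\times M(K)\times (H\otimes K)$ together with $\mathrm{Hom}(H\otimes K,\C^\times)\cong H\otimes K$ for finite groups, gives equality of orders. A surjective homomorphism between finite groups of equal order is bijective. I would present the twisted-group-algebra argument and mention the order count as a check.
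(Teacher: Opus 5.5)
Your proof is correct. The paper gives no argument of its own: it cites Karpilovsky and records the isomorphism only in the direction $([\beta],[\gamma],f)\mapsto[\alpha]$, with $\alpha(h_1k_1,h_2k_2)=\beta(h_1,h_2)\gamma(k_1,k_2)f(\overline{h_1}\otimes\overline{k_2})$.

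You go the other way. You use restriction to $H$ and $K$ together with the commutator pairing $\alpha(h,k)\alpha(k,h)^{-1}$, which is the same invariant as the Iwahori--Matsumoto map $\psi$ in the proof of Theorem~\ref{cohomological characterization}, and you prove bijectivity by hand. The only inputs are bimultiplicativity of that pairing on commuting elements and the rescaling trick in $\C^{\alpha}G$. So no spectral sequence is needed, and finiteness of $H$ and $K$ is never used. The Schur-formula version of your order-count ``check'' is the statement itself, so you are right not to rely on it.

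Your sign worry is harmless. For normalized $\beta,\gamma$, the paper's cocycle also satisfies $\alpha(h,k)\alpha(k,h)^{-1}=f(\bar h\otimes\bar k)$. Its factor $f(\overline{h_1}\otimes\overline{k_2})$ differs from your $f(\overline{h_2}\otimes\overline{k_1})^{-1}$ by the coboundary of $t(hk)=f(\bar h\otimes\bar k)$, so both define the same inverse on cohomology.

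The two descriptions serve different purposes. The paper's forward formula is the one it exploits later, for example in Theorem~\ref{directproduct}, where the $f$-factor is visibly trivial as soon as one argument lies in $H'\times K'$. Your description shows why the third component is intrinsic to the class.

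One detail you should state explicitly: all cocycles are taken normalized. This is what gives $t_H(1)=t_K(1)=1$, so your adjusting function $t$ is well defined at the identity. It is also what gives $\rho(h)=u_h$ and $\rho(k)=u_k$ in the injectivity step.
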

	The above isomorphism $\mathrm{H}^2(H, \mathbb{C}^{\times}) \times \mathrm{H}^2(K, \mathbb{C}^{\times}) \times \mathrm{Hom}(H \otimes K, \mathbb{C}^{\times}) \to \mathrm{H}^2(H \times K, \mathbb{C}^{\times})$ is given 
	by the map
	\[
	([\beta],[\gamma],f) \longmapsto [\alpha], 
	\]
	such that $\alpha(h_1k_1,h_2k_2)= \beta(h_1,h_2)\,\gamma(k_1,k_2)\,f(\overline{h_1}\otimes\overline{k_2}), $ for $ h_1,h_2 \in H, $  $k_1,k_2 \in K, \overline{h_1}= h_1H', \overline{k_2}= k_2K'$.
	Via this isomorphism, we write $\alpha=(\beta,\gamma,f)$. \\
	

	Recall that, for two matrices $A=(a_{ij})_{m\times m}$ and $B=(b_{ij})_{n\times n}$, their tensor product is defined by
	\[
	A \otimes B
	=
	\begin{pmatrix}
		a_{11}B & \cdots & a_{1m}B \\
		\vdots  & \ddots & \vdots \\
		a_{m1}B & \cdots & a_{mm}B
	\end{pmatrix}.
	\]
	The next result is immediate.
	\begin{lemma} \label{tensor product of matrices}
		Let $A \in GL_m(\mathbb{C})$ and $B \in GL_n(\mathbb{C})$. Then $A \otimes B = \lambda I_{mn}$ for some $\lambda \in \mathbb{C}^\times$ if and only if there exist scalars $\lambda_1, \lambda_2 \in \mathbb{C}^\times$ such that $A = \lambda_1 I_m \text{ and } B = \lambda_2 I_n.$
	\end{lemma}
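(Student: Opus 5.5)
The plan is to prove the two implications separately, working entry-by-entry with the block form of $A\otimes B$ recalled just before the statement.

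For the easy direction, suppose $A=\lambda_1 I_m$ and $B=\lambda_2 I_n$. The $(i,j)$ block of $A\otimes B$ is $a_{ij}B=\lambda_1\delta_{ij}\lambda_2 I_n$. Hence $A\otimes B=\lambda_1\lambda_2 I_{mn}$, and $\lambda=\lambda_1\lambda_2\neq 0$.

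For the converse, assume $A\otimes B=\lambda I_{mn}$ with $\lambda\in\C^\times$. Comparing blocks gives $a_{ij}B=0$ for $i\neq j$ and $a_{ii}B=\lambda I_n$ for every $i$.

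First, since $B$ is invertible, $B\neq 0$, so $a_{ij}=0$ whenever $i\neq j$. Thus $A$ is diagonal.

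Next, $a_{ii}B=\lambda I_n$ with $\lambda\neq 0$ forces $a_{ii}\neq 0$, and therefore $B=(\lambda/a_{ii})I_n$. Comparing two indices $i,k$ gives $(\lambda/a_{ii})I_n=(\lambda/a_{kk})I_n$, so $a_{ii}=a_{kk}$. Put $\lambda_1=a_{11}$ and $\lambda_2=\lambda/a_{11}$. Then $A=\lambda_1 I_m$ and $B=\lambda_2 I_n$, with both scalars nonzero, as required.

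The only point needing care is using the invertibility of $B$ (so $B\neq 0$) and $\lambda\neq 0$ to rule out zero diagonal entries of $A$; everything else is routine.
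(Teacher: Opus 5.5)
Your proof is correct. The paper gives no proof, calling the lemma immediate, and your block-by-block comparison of $A\otimes B$ with $\lambda I_{mn}$ is the intended routine verification. As a small remark, the invertibility hypotheses are not actually needed: $a_{11}B=\lambda I_n$ with $\lambda\neq 0$ already forces $B\neq 0$.
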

	
	Let $\rho_1: H \to GL(V_1)$ be a $\beta$-representation of $H$ and $\rho_2: K \to GL(V_2)$ be a $\gamma$-representation of $K$. The tensor product $\rho_1 \otimes \rho_2: H \times K \to GL(V_1\otimes V_2)$ is defined by $$(\rho_1 \otimes \rho_2) (h,k)= \rho_1(h) \otimes \rho_2(k), h \in H, k \in K.$$ It is easy to check that $\rho_1 \otimes \rho_2$ is a $(\beta, \gamma, 1)$-representation of $G$
	(\cite[Chapter 5, Section 1]{karpilovsky}).
	
	
	\begin{theorem} \label{tensor product is faithful} 
		Let $G = H \times K$ and $\alpha = (\beta,\gamma,1)$ $\in$ $Z^{2}(G,\mathbb{C}^{\times})$. Then, up to linear equivalence, an irreducible $\alpha$-representation $\rho$ is of the form $\rho_1 \otimes \rho_2$ for some $\rho_1\in \mathrm{Irr}^\beta(H), \rho_2\in \mathrm{Irr}^\gamma(K)$. Furthermore,
		$\rho$ is faithful on $G$ if and only $\rho_1$ and $\rho_2$ are faithful on $H$ and $K$ respectively.
	\end{theorem}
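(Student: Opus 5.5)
The statement has two parts: a classification of irreducible $\alpha$-representations for $\alpha=(\beta,\gamma,1)$, and a faithfulness criterion. I will treat them in that order, with the second resting mainly on Lemma \ref{tensor product of matrices}.

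\emph{Classification.} The plan is to reduce to ordinary representations of representation groups. Let $H^*$ and $K^*$ be representation groups of $H$ and $K$, with central subgroups $A\subseteq H^*$ and $B\subseteq K^*$. Choose $\chi\in\mathrm{Hom}(A,\C^\times)$ and $\eta\in\mathrm{Hom}(B,\C^\times)$ with $\beta=\mathrm{tra}(\chi)$ and $\gamma=\mathrm{tra}(\eta)$; by Lemma \ref{projectively equivalent to an ordinary representation} we may use these representatives. The group $H^*\times K^*$ is a central extension of $G$ by $A\times B$. Take the product section $\mu(h,k)=(\mu_H(h),\mu_K(k))$. Its transgression of $\chi\times\eta$ is exactly $(h_1k_1,h_2k_2)\mapsto\beta(h_1,h_2)\gamma(k_1,k_2)$, which is $(\beta,\gamma,1)$.

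Now let $\rho\in\mathrm{Irr}^\alpha(G)$. The argument behind Theorem \ref{bijective correspondence} shows that $\Gamma(a\mu(g)):=(\chi\times\eta)(a)\rho(g)$ is an irreducible ordinary representation of $H^*\times K^*$ lying over $\chi\times\eta$; that argument only uses centrality of the extension. Irreducible representations of a direct product are outer tensor products, so $\Gamma\simeq\Gamma_1\otimes\Gamma_2$ with $\Gamma_1\in\mathrm{Irr}(H^*\mid\chi)$ and $\Gamma_2\in\mathrm{Irr}(K^*\mid\eta)$. Descending by $\rho_1(h)=\Gamma_1(\mu_H(h))$ and $\rho_2(k)=\Gamma_2(\mu_K(k))$ gives $\rho_1\in\mathrm{Irr}^\beta(H)$, $\rho_2\in\mathrm{Irr}^\gamma(K)$, and $\rho\simeq\rho_1\otimes\rho_2$ linearly.

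If one prefers to avoid representation groups, a counting argument also works. First, $\rho_1\otimes\rho_2$ is irreducible: compute $\langle\chi,\chi\rangle$ for the $\alpha$-character, which factors as a product of the two factors' inner products. Second, distinct pairs give inequivalent representations. Third, the sum of squared degrees equals $|H||K|$ on both sides, since $\sum_{\mathrm{Irr}^\alpha}(\deg)^2=|G|$. Hence every irreducible $\alpha$-representation is of this form. This route is safer if $H^*\times K^*$ is not literally a representation group, although it does not need to be one for the argument above.

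\emph{Faithfulness.} Since faithfulness is invariant under linear equivalence (Lemma \ref{projectively equivalent}), assume $\rho=\rho_1\otimes\rho_2$, so $\rho(h,k)=\rho_1(h)\otimes\rho_2(k)$. By Lemma \ref{tensor product of matrices}, $\rho(h,k)$ is scalar if and only if both $\rho_1(h)$ and $\rho_2(k)$ are scalar. Hence $\ker\rho=\ker\rho_1\times\ker\rho_2$. This subgroup is trivial exactly when both $\ker\rho_1$ and $\ker\rho_2$ are trivial, which proves the criterion.

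The main obstacle is the classification step, specifically making sure the chosen section reproduces $\alpha$ exactly rather than just up to cohomology, so that the descent lands in $\mathrm{Irr}^\beta(H)\times\mathrm{Irr}^\gamma(K)$ with the given cocycles. The faithfulness part is immediate once the classification is in place.
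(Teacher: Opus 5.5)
Your proposal is correct. The faithfulness half is the paper's argument. The paper handles the two directions separately: it evaluates at $(1,k')$ for one direction and invokes Lemma~\ref{tensor product of matrices} for the other. You package both as $\ker\rho=\ker\rho_1\times\ker\rho_2$, which is the same content.

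The classification half is where you genuinely differ. The paper does not prove it; it quotes \cite[Chapter 5, Corollary 1.3]{karpilovsky}. The standard proof of that fact goes through the twisted group algebra isomorphism $\C^{\alpha}G\cong\C^{\beta}H\otimes_{\C}\C^{\gamma}K$ for $\alpha=(\beta,\gamma,1)$, together with the description of simple modules over a tensor product of split semisimple algebras.

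You instead lift to $H^*\times K^*$. This is a central extension of $G$ by $A\times B$, in general not a representation group of $G$, since it misses the factor $\mathrm{Hom}(H\otimes K,\C^\times)$ of $M(G)$. You note that the product section transgresses $\chi\times\eta$ to exactly $(\beta,\gamma,1)$, and then apply the ordinary theorem on irreducible representations of direct products. Your counting alternative is also sound once the cocycles are normalized to take values in roots of unity; then the orthogonality relations for $\alpha$-characters have their familiar form and $\sum_{\mathrm{Irr}^\alpha(G)}\deg^2=|G|$ applies.

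Your route is self-contained and shows exactly where $f=1$ is used: a product section produces no cross term $f(\overline{h_1}\otimes\overline{k_2})$. The citation is shorter but hides this.

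One point to tighten. The equalities $\beta=\mathrm{tra}(\chi)$ and $\gamma=\mathrm{tra}(\eta)$ hold only in cohomology. Lemma~\ref{projectively equivalent to an ordinary representation} alone does not show that passing to these representatives respects the tensor decomposition. The missing line is the following:
\begin{itemize}
\item Suppose the new representatives differ from $\beta$ and $\gamma$ by the coboundaries of $t\colon H\to\C^\times$ and $s\colon K\to\C^\times$.
\item Then $(\beta',\gamma',1)$ differs from $\alpha$ by the coboundary of $u(h,k)=t(h)s(k)$.
\item The bijection $\rho\mapsto u\rho$ from $\mathrm{Irr}^{\alpha}(G)$ to $\mathrm{Irr}^{(\beta',\gamma',1)}(G)$ sends $\rho_1\otimes\rho_2$ to $(t\rho_1)\otimes(s\rho_2)$.
\end{itemize}
So the classification transfers back to the given cocycles $\beta$ and $\gamma$.
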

	
	\begin{proof}
		By \cite[Chapter 5, Corollary 1.3]{karpilovsky}, it follows that, up to linear equivalence, $\rho= \rho_1 \otimes \rho_2$ for some $\rho_1 \in \mathrm{Irr}^\beta(H)$ and $\rho_2 \in \mathrm{Irr}^\gamma(K)$. 
		
		
		Suppose $\rho= \rho_1 \otimes \rho_2$ is faithful and $\rho_2$ is not faithful.
		Then, $\exists$ $k^{\prime} \in K$, $k^{\prime} \neq 1$ such that $\rho_{2}(k')= d I_{n \times n}$ for some $d \in \mathbb{C}^{\times}$. Then, we have $$(\rho_1 \otimes \rho_{2})\left(1,k^{\prime}\right)= \rho_{1}\left(1\right) \otimes \rho_{2}(k') = I_{m\times m}  \otimes d I_{n \times n}= d I_{m n \times m n}.$$ 
		This contradicts our assumption that $\rho_{1} \otimes \rho_{2}$ is faithful. So, $\rho_{2}$ must be faithful. Using similar arguments, one can verify that $\rho_{1}$ is also faithful.
		Conversely, let $\rho_1$ and $\rho_2$ be faithful.
		If $(\rho_{1} \otimes \rho_{2})(h,k)=$ $\lambda I$ for some $\lambda \in \mathbb{C}^{\times}$, then $\rho_{1}(h) \otimes \rho_{2}(k) = \lambda I$. Hence, the result follows from Lemma \ref{tensor product of matrices}. 
	\end{proof}
	
	\begin{theorem} \label{directproduct}
		Let $G = H \times K$, where $H$ and $K$ are non-abelian $p$-groups. Let $\alpha$ $\in$ $Z^{2}(G,\mathbb{C}^{\times})$. 
		\begin{enumerate}
			\item If $\alpha=(1,1,f)$, then $\rho \in \mathrm{Irr}^\alpha(G)$ is not faithful.
			
			\item If $Z(G) \subseteq G^{\prime}$ and $\alpha = (\beta,\gamma,f)$, then an irreducible $\alpha$-representation of $G$ is faithful if and only if $1$ is the only $\beta$-regular element of $Z(H)$ and $1$ is the only $\gamma$-regular element of $Z(K)$. Furthermore, $G$ admits a faithful irreducible $\alpha$-representation if and only if $H$ and $K$ admit faithful irreducible $\beta$ and $\gamma$-representation respectively.     
		\end{enumerate}
	\end{theorem}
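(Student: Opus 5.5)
For part (1), I would exhibit a non-trivial central element of $G$ that is $\alpha$-regular and then apply Lemma \ref{faithful projective representation}(1). With $\alpha=(1,1,f)$ we have $\alpha(h_1k_1,h_2k_2)=f(\overline{h_1}\otimes\overline{k_2})$. Since $H$ is non-abelian, $Z(H)\cap H'\neq 1$, because $H$ is a non-abelian $p$-group and so $H'$ is a non-trivial normal subgroup meeting the centre. Pick $1\neq z\in Z(H)\cap H'$. Then $\overline{z}=1$ in $H/H'$, so $\alpha(z,g)=f(\overline{z}\otimes \overline{k})=1$ for $g=hk$. Also $\alpha(g,z)=f(\overline{h}\otimes \overline{1})=1$, since the $K$-component of $z$ is trivial. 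Hence $z$ is an $\alpha$-regular element of $Z(G)$, and no irreducible $\alpha$-representation is faithful.

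For part (2), I would use that $Z(G)=Z(H)\times Z(K)\subseteq G'=H'\times K'$, which gives $Z(H)\subseteq H'$ and $Z(K)\subseteq K'$. For $z=z_1z_2\in Z(G)$ and $g=hk$, the $f$-factors in $\alpha(z,g)$ and $\alpha(g,z)$ are $f(\overline{z_1}\otimes\overline{k})$ and $f(\overline{h}\otimes\overline{z_2})$. Both equal $1$ because $z_1\in H'$ and $z_2\in K'$. So $\alpha(z,g)\alpha(g,z)^{-1}=\beta(z_1,h)\beta(h,z_1)^{-1}\,\gamma(z_2,k)\gamma(k,z_2)^{-1}$.

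Taking $k=1$ and using normalized cocycles, the central element $z$ is $\alpha$-regular if and only if $z_1$ is $\beta$-regular in $H$ and $z_2$ is $\gamma$-regular in $K$. The converse direction is immediate from the product formula, since $C_G(z)=G$. Thus $1$ is the only $\alpha$-regular element of $Z(G)$ if and only if the same holds for $\beta$ on $Z(H)$ and for $\gamma$ on $Z(K)$.

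Combining Lemma \ref{faithful projective representation}(1) and (2), since $G$ is nilpotent, an irreducible $\alpha$-representation is faithful exactly under this condition. This gives the first assertion. For the second assertion, I apply the same two parts of that lemma to $H$ with $\beta$ and to $K$ with $\gamma$. The condition on $Z(H)$ is equivalent to $H$ having a faithful irreducible $\beta$-representation, since $\mathrm{Irr}^\beta(H)$ is non-empty. The same holds for $K$ and $\gamma$.

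The main subtlety is the normalization step: I need to replace $\alpha$ by the cohomologous cocycle given by the explicit formula, and $\beta,\gamma$ by normalized representatives. This is justified by Lemma \ref{projectively equivalent}, and because $\alpha$-regularity of a central element depends only on the class $[\alpha]$, as $\alpha(z,g)\alpha(g,z)^{-1}$ is invariant under coboundaries when $z$ and $g$ commute.
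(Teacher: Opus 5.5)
Your proof is correct and follows essentially the paper's argument. For (1), the paper shows $[\alpha]$ is inflated from $G/N$ with $N=(H'\cap Z(H))\times(K'\cap Z(K))$ and invokes Theorem \ref{cohomological characterization}; the direction it uses is proved exactly by your observation that such central elements are $\alpha$-regular, combined with Lemma \ref{faithful projective representation}(1). For (2), the paper makes the same reduction via Lemma \ref{faithful projective representation} to comparing $\alpha$-regular central elements, setting $k_1=1$; you are more careful than the paper in explicitly using $Z(G)\subseteq G'$ to kill the $f$-factors and in spelling out the ``furthermore'' clause.
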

	
	\begin{proof}
		(1) Let $N = \big(H^{\prime}$ $\cap$ $Z(H)\big) \times \big(K^{\prime}$ $\cap$ $Z(K)\big)$. Since $\alpha|_{N \times G} = \alpha|_{G \times N}= 1$, there exists a $\beta\in \mathrm{Z}^2(G/N, \C^{\times})$ such that $\operatorname{inf}([\beta])=[\alpha]$.  
		The result now follows from Theorem \ref{cohomological characterization}. 
		\\  
		
		(2) By Lemma \ref{faithful projective representation}, it suffices to show that $1$ is the only $\alpha$-regular element of $Z(G)$ if and only if $1$ is the only $\beta$-regular element of $Z(H)$ and $1$ is the only $\gamma$-regular element of $Z(K)$.    
		Let $(h,k)$ be an $\alpha$-regular element of $Z(G)$. Then, $\alpha \big((h,k),(h_1,k_1)\big)= \alpha\big((h_1,k_1),(h,k)\big)$ $\forall$ $(h_1,k_1) \in G$. Putting $k_1 = 1$, we deduce that $\beta(h,h_1) = \beta(h_1,h)$ $\forall$ $h_1 \in H$. Hence, $h$ is a $\beta$-regular element of $Z(H)$. Similarly, it can be shown that $k$ is a $\gamma$-regular element of $Z(K)$.
		Conversely, let $h$ be a $\beta$-regular element of $Z(H)$ and $k$ be a $\gamma$-regular element of $Z(K)$. It is easy to check that $\alpha\big((h,k),(h_1,k_1)\big)= \alpha\big((h_1,k_1),(h,k)\big)$ $\forall$ $(h_1,k_1)\in G$. Therefore, $(h,k)$ is an $\alpha$-regular element of $Z(G)$. This completes the proof of the result. 
	\end{proof}
	
	\section{$\tau(G)$ and $\tau_{irr}(G)$ for finite groups $G$} \label{emdedding degree} We first establish a relationship between $\delta(G)$ and $\tau(G)$. 
	
	
	\begin{lemma} \label{relation}
		For any finite group $G$, $\tau(G) \le \delta(G)+ 1$.
	\end{lemma}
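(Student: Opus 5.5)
Take a faithful ordinary representation $\psi: G \to \GL_n(\C)$ with $n = \delta(G)$, and consider the ordinary representation $\rho = \psi \oplus 1_G$ of degree $n+1$. Since ordinary representations are $\alpha$-representations with $\alpha = 1$, it suffices to show that $\rho$ is faithful as a projective representation; then $\tau(G) \le n+1 = \delta(G)+1$.

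To check faithfulness, suppose $\rho(g) = \lambda I_{n+1}$ for some $\lambda \in \C^\times$. The block structure gives
\[
\rho(g) = \begin{pmatrix} \psi(g) & 0 \\ 0 & 1 \end{pmatrix}.
\]
Comparing the bottom-right entry forces $\lambda = 1$, and then the top-left block gives $\psi(g) = I_n$. Faithfulness of $\psi$ as an ordinary representation yields $g = 1$, so $\ker \rho = 1$ in the projective sense.

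There is essentially no obstacle. The only point to address is that a faithful ordinary representation exists for every finite group, which holds because the regular representation is faithful; hence $\delta(G)$ is defined. One should also note, consistent with the paper's convention, that the trivial group needs no special treatment: there $\delta(G)=1$ and $\tau(G)=1\le 2$.
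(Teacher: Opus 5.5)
Your proposal is correct and follows the paper's proof exactly: you take a faithful ordinary representation $\psi$ of degree $\delta(G)$, add a trivial summand, and check that $\psi \oplus 1_G$ is faithful as a projective representation. You also supply the verification the paper leaves as an observation, namely that the corner entry $1$ forces $\lambda = 1$, so $\psi(g) = I_n$ and hence $g=1$.
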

	
	\begin{proof} Let $\delta(G)= n$ and $\psi$ be a faithful ordinary representation of $G$ of degree $n$. 
		Suppose $\rho$: $G\,\to\,GL_{n+1}(\mathbb{C})$ is defined by \[\rho(g)=
		\begin{bmatrix}
			\psi(g) & 0 \\
			0 &  1
		\end{bmatrix}.
		\]
		Observe that $\rho$ is a faithful projective representation of $G$. Hence, the result follows.
	\end{proof}
	Note that the above bound is optimal, as is evident from the next theorem. 
	\begin{lemma} \label{group with cyclic centre of order p}
		Let $G$ be a $p$-group such that $M(G)= 1$ and $Z(G)$ is of order $p$. Then $\tau_{irr}(G)$ does not exist and $\tau(G)= \delta(G)+1$.
	\end{lemma}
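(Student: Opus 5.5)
Since $M(G)=1$, every cocycle is a coboundary, so by Lemma \ref{projectively equivalent to an ordinary representation} and Lemma \ref{projectively equivalent} we may restrict attention to ordinary representations of $G$. The whole problem therefore becomes: which ordinary representations of $G$ are faithful in the projective sense, i.e.\ have trivial projective kernel $\{g \mid \rho(g)=\lambda I\}$?

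\emph{Non-existence of $\tau_{irr}(G)$.} I would argue directly. If $\rho$ is an irreducible ordinary representation, Schur's lemma gives $\rho(z)$ scalar for every $z\in Z(G)$. Since $Z(G)\neq 1$, the projective kernel contains $Z(G)$, so no irreducible projective representation is faithful. Alternatively, take $N=Z(G)$ in Theorem \ref{cohomological characterization}: $[\alpha]$ is trivial and hence lies in the image of inflation. Either way, $\tau_{irr}(G)$ does not exist.

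\emph{Lower bound $\tau(G)\ge \delta(G)+1$.} The upper bound $\tau(G)\le\delta(G)+1$ is Lemma \ref{relation}, so this is the main point. Let $\rho=\oplus_i\rho_i$ be a projectively faithful ordinary representation of degree $n$, with $\rho_i$ irreducible. First, $\rho$ is faithful as an ordinary representation, since the ordinary kernel lies inside the projective kernel; hence $n\ge\delta(G)$. Suppose for contradiction that $n=\delta(G)$.

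\begin{itemize}
\item Write $Z(G)=\langle z\rangle$ of order $p$. Each $\rho_i(z)=\omega^{k_i}I$ for some primitive $p$-th root of unity $\omega$.
\item Since $\rho(z)$ must not be scalar, at least two $k_i$ differ.
\item I then want to replace $\rho$ by a faithful ordinary representation of degree strictly less than $n$. The idea is to discard every summand $\rho_i$ with $\rho_i(z)=I$, i.e.\ with $k_i=0$. What remains is still faithful: its ordinary kernel $K$ is normal, and if $K\neq 1$ then $K\cap Z(G)\neq 1$ (as $G$ is a $p$-group), forcing $z\in K$, which is false because the remaining summands are nontrivial on $z$.
\item This only lowers the degree if some $k_i=0$. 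Otherwise all $k_i\neq 0$ but are not all equal. In that case I would pick a single $i$. The representation $\rho_i$ is nontrivial on $z$, so $\ker\rho_i\cap Z(G)=1$, and hence $\ker\rho_i=1$ by the same normal-subgroup argument. So $\rho_i$ alone is faithful, of degree $<n$, again contradicting $n=\delta(G)$.
\end{itemize}

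I should double-check the first case, where some $k_i=0$. It seems to need that a nontrivial remaining summand exists, and one does, since not all $k_i$ are equal.

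In fact the argument simplifies: any single $\rho_i$ with $k_i\neq 0$ is faithful. So a minimal faithful ordinary representation of a group with centre of order $p$ can be taken irreducible, and every faithful ordinary representation has degree at least $\delta(G)=\delta_{irr}(G)$. The projectively faithful $\rho$ contains a summand with $k_i\ne0$, which has degree $\ge\delta(G)$. It also contains another summand with a different $k_j$, contributing at least one more dimension. Hence $n\ge\delta(G)+1$.

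The main obstacle is exactly this inequality: showing that the minimal faithful degree is attained by one irreducible summand. That rests on every nontrivial normal subgroup of a $p$-group meeting $Z(G)$, together with $|Z(G)|=p$. Combining it with Lemma \ref{relation} gives $\tau(G)=\delta(G)+1$.
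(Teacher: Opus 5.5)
Your proof is correct and follows essentially the same route as the paper: you reduce to ordinary representations via $M(G)=1$, rule out irreducibles because $Z(G)$ acts by scalars, use that a summand nontrivial on the order-$p$ centre has trivial kernel (nontrivial normal subgroups of a $p$-group meet $Z(G)$), and take the upper bound from Lemma \ref{relation}. The paper phrases the key step contrapositively (a projectively faithful representation of degree exactly $\delta(G)$ is reducible, so every summand has degree less than $\delta(G)$, is non-faithful, and is therefore trivial on $Z(G)$), whereas your direct count is slightly cleaner; your exploratory bullet points can be dropped in favour of your final paragraph.
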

	
	\begin{proof}
		
		Since $M(G)=1$, by Lemma \ref{projectively equivalent to an ordinary representation} and Lemma \ref{projectively equivalent}, it suffices to consider only ordinary representations of $G$.
		Since faithful projective representations are also faithful ordinary representations, by Lemma \ref{relation}, we have $\dg \leq \tau(G) \leq \dg+1$. Let $\tau(G)= \dg = m$ and $\rho$ be an ordinary representation of degree $m$ which is faithful as a projective representation of $G$. We write $\rho = \oplus_{i=1}^k \rho_i$, where $\rho_i \in \mathrm{Irr}(G)$. By Lemma \ref{one dim direct sum in non-abelian group}$(2)$, it follows that $\tau_{irr}(G)$ does not exist. Hence, $k > 1$. 
		Since $\rho_i$ is not a faithful ordinary representation, by the fact used in the proof of Lemma \ref{restriction}, $\rho_i|_{Z(G)}$ cannot be a faithful ordinary representation. Thus, $\rho_i|_{Z(G)}= n_i 1_{Z(G)}$ as $Z(G)$ is of order $p$. Therefore, $\rho|_{Z(G)}= n 1_{Z(G)}$ for some $n \in \mathbb N$. Since $\rho|_{Z(G)}$ is not faithful, by Lemma \ref{restriction}, $\rho$ cannot be a faithful on $G$. Consequently, $\tau(G)= \delta(G)+1$. 
		
	\end{proof}
	In the next result, we describe $\tau_{irr}(G)$ and $\tg$ for $G= H \times K$. 
	
	
	
	\begin{theorem} \label{projective irreducible embedding degree for direct product} Let $G = H \times K$. \\
		(i) If $gcd(|H|,|K|)=1$, then \textbf{$\tau_{irr}(G) = \tau_{irr}(H) \tau_{irr}(K)$}.   \\
		(ii) If $M(H)= 1$ and $M(K)= 1$, then \textbf{$\tau(G) \leq \tau(H) + \tau(K)$}. \\
		(iii) \textbf{$\tau(G) \leq \tau(H) \tau(K)$}.    
	\end{theorem}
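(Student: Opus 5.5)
Part (iii) is the simplest, so I would do it first. Take faithful projective representations $\rho_1$ of $H$ (a $\beta$-representation of degree $\tau(H)$) and $\rho_2$ of $K$ (a $\gamma$-representation of degree $\tau(K)$). Form $\rho_1\otimes\rho_2$; it is a $(\beta,\gamma,1)$-representation of $G$. If $(\rho_1\otimes\rho_2)(h,k)=\lambda I$, then Lemma \ref{tensor product of matrices} gives $\rho_1(h)=\lambda_1 I$ and $\rho_2(k)=\lambda_2 I$. Faithfulness of $\rho_1$ and $\rho_2$ then forces $h=1$ and $k=1$. This is exactly the converse direction in the proof of Theorem \ref{tensor product is faithful}, and irreducibility is not needed there. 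Hence $\tau(G)\le\tau(H)\tau(K)$.

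For (ii), since $M(H)=M(K)=1$, Lemmas \ref{projectively equivalent to an ordinary representation} and \ref{projectively equivalent} let us take the minimal faithful projective representations $\rho_1,\rho_2$ to be ordinary, of degrees $\tau(H)$ and $\tau(K)$. I would set $\rho(h,k)=\rho_1(h)\oplus\rho_2(k)$, which is an ordinary representation of $G$ of degree $\tau(H)+\tau(K)$. Suppose $\rho(h,k)=\lambda I$. Then $\rho_1(h)=\lambda I$, so $h=1$ by faithfulness, and similarly $k=1$. So $\rho$ is faithful and the bound follows. Here the hypothesis $M=1$ is used only to make both summands ordinary, so that their direct sum is an honest representation with a common (trivial) cocycle. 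In general, direct sums of projective representations with different cocycles are not projective representations.

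For (i), assume $\gcd(|H|,|K|)=1$. Then $H/H'\otimes K/K'=0$, so by Theorem \ref{schur multiplier of direct product} every cocycle of $G$ is cohomologous to some $(\beta,\gamma,1)$. By Theorem \ref{tensor product is faithful}, every irreducible $\alpha$-representation is then $\rho_1\otimes\rho_2$ with $\rho_1\in\mathrm{Irr}^\beta(H)$ and $\rho_2\in\mathrm{Irr}^\gamma(K)$. It is faithful if and only if both factors are faithful, and its degree is $\deg\rho_1\cdot\deg\rho_2$. Two consequences follow.
\begin{enumerate}
\item If both $\tau_{irr}(H)$ and $\tau_{irr}(K)$ exist, take faithful irreducible factors of those degrees. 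Their tensor product is irreducible (Theorem \ref{tensor product is faithful}) and faithful, so $\tau_{irr}(G)\le\tau_{irr}(H)\tau_{irr}(K)$.
\item Any faithful irreducible representation of $G$ has degree $\deg\rho_1\deg\rho_2\ge\tau_{irr}(H)\tau_{irr}(K)$, which gives the reverse inequality.
\end{enumerate}
The same reasoning shows that $\tau_{irr}(G)$ exists if and only if both $\tau_{irr}(H)$ and $\tau_{irr}(K)$ exist, which settles the existence issue. The main subtle step is the reduction to cocycles of the form $(\beta,\gamma,1)$ via the vanishing of $\mathrm{Hom}(H\otimes K,\mathbb C^\times)$, combined with Lemma \ref{projectively equivalent} to pass to that cocycle representative without affecting faithfulness or degree.
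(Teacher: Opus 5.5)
Your proposal is correct and follows the paper's proof essentially step for step. Part (ii) uses the block-diagonal sum of ordinary faithful representations, part (iii) uses the tensor product together with Lemma \ref{tensor product of matrices}, and part (i) reduces to cocycles $(\beta,\gamma,1)$ via Theorem \ref{schur multiplier of direct product} and then applies Theorem \ref{tensor product is faithful}. You also spell out the existence of $\tau_{irr}(G)$ and the passage to a cohomologous representative via Lemma \ref{projectively equivalent}, which the paper leaves implicit.
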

	
	\begin{proof} 
		(i) Let $\tau_{{irr}}(H)=m$,  $\tau_{irr}(K)=n$ and $\tau_{irr}(G)=d$. 
		Since $\gcd(|H|, |K|) = 1$, by Theorem~\ref{schur multiplier of direct product}, any cocycle $\alpha \in Z^2(G,\mathbb{C}^\times)$ is of the form $\alpha = (\beta, \gamma, 1).$ 
		Hence, by Theorem \ref{tensor product is faithful}, any $\rho \in \mathrm{Irr}^\alpha(G)$ is of the form $\rho = \rho_{1} \otimes \rho_{2}$, where $\rho_{1} \in \operatorname{Irr}^{\beta}(H), \rho_{2} \in \operatorname{Irr}^{\gamma}(K)$, and
		$\rho$ is faithful if and only if $\rho_{1}, \rho_{2}$ are also faithful. Since 
		deg $(\rho_{1} \otimes \rho_{2}) =(\operatorname{deg} \rho_{1})(\operatorname{deg} \rho_{2}), ~d = mn$ and the result follows.
		
		
		(ii) Let $\tau(H)= m$ and $\tau(K)= n$. Let $\eta$ and $\psi$ be faithful $\beta$ and $\gamma$-representation of $H$ and $K$ of degree $m$ and $n$ respectively. Since $M(H)=1$ and $M(K)=1$, we can assume that $\beta= \gamma=1$. Define $\sigma: G \to GL_{m+n}(\mathbb{C})$ by:
		\[
		\sigma(h,k)= \begin{bmatrix}
			\eta(h) &  \\
			& \psi(k)
		\end{bmatrix}
		\]
		It is easy to verify that $\sigma$ is a faithful $\alpha$-representation of $G$ for $\alpha =(1,1,1)$. Thus, $\tau(G) \leq \tau(H) + \tau(K)$.
		
		(iii) Let $\eta$ and $\psi$ be faithful $\beta$ and $\gamma$-representation of $H$ and $K$ respectively. Using similar arguments as in the proof of Theorem \ref{tensor product is faithful}, $\eta \otimes \psi$ is faithful. Consequently, the result follows.   
	\end{proof}

	\subsection{Non-capable groups $G_1$ and $G_2$ for which $\tau(G_1)= \tau(G_2)$}
	In the next result, we determine sufficient conditions under which non-capable groups have the same projective embedding degree.
	We say that an isomorphism $\psi : G_2 \to G_1$ induces an isomorphism $\bar{\psi} : \Ho^2(G_1,\mathbb{C}^{\times}) \to
	\Ho^2(G_2,\mathbb{C}^{\times})$
	if $\bar{\psi}$ is defined by $\bar{\psi}([\alpha]) = [\beta]$ such that $\beta(g,h)= \alpha(\psi(g),\psi(h))$
	for $g,h \in G_2$.
	\begin{theorem} \label{tau(G_1) = tau(G_2)}
		Let $G_1$ and $G_2$ be two non-capable groups and $A_j \subseteq Z^*(G_j) \cap G_j'$ for $j= 1,2$. Suppose there exist isomorphisms $\eta : A_2 \to A_1$ and $\gamma : G_2/A_2 \to G_1/A_1$ such that Diagram~1 is commutative for the induced isomorphisms
		$\bar{\eta}$ and $\bar{\gamma}$. Then, $\tau(G_1) = \tau(G_2)$. 
		\begin{figure}[H]
			\[
			\xymatrix{
				1 \ar[r] & \mathrm{Hom}(A_{1}, \mathbb C^\times) \ar[d]^{\bar{\eta}} \ar[r] ^{\mathrm{tra}} & \Ho^2(G_1/A_{1}, \mathbb C^\times)  \ar[d]^{\bar{\gamma}} \\
				1 \ar[r] & \mathrm{Hom}(A_{2}, \mathbb C^\times) \ar[r] ^{\mathrm{tra}} & \Ho^2(G_2/A_{2}, \mathbb C^\times)\\
			}
			\] \caption{Diagram 1}
		\end{figure}
	\end{theorem}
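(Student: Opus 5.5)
By symmetry it suffices to show $\tau(G_2)\le\tau(G_1)$: the hypotheses are symmetric after replacing $\eta,\gamma$ by their inverses, which make the inverted Diagram~1 commute. The plan is to transport a minimal faithful projective representation of $G_1$ to one of $G_2$ of the same degree, working entirely at the level of the quotients $G_j/A_j$ through Theorem~\ref{bijective correspondence via inflation}.

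\textbf{Step 1 (reduction to quotients).} Since $A_j\subseteq Z^*(G_j)$, Theorem~\ref{epicenter} shows that $\inf:\Ho^2(G_j/A_j,\C^\times)\to\Ho^2(G_j,\C^\times)$ is surjective. Hence every class $[\alpha]$ on $G_j$ is represented by an inflated cocycle $\alpha(g,h)=\beta(gA_j,hA_j)$. By Lemma \ref{exact sequence}, the fibre of $\inf$ over $[\alpha]$ is the coset $[\beta]\,\mathrm{tra}(\mathrm{Hom}(A_j,\C^\times))$. Using Remark \ref{remark for bijective correspondence via inflation}, any $\alpha$-representation $\rho=\oplus_i\rho_i$ of $G_j$, with the $\rho_i$ irreducible, corresponds to data $(\chi_i,\tilde\rho_i)$, where $\chi_i\in\mathrm{Hom}(A_j,\C^\times)$ and $\tilde\rho_i\in\irr^{\beta\mathrm{tra}(\chi_i)}(G_j/A_j)$. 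The correspondence is given by $\rho_i(a\mu(x))=\chi_i(a)\tilde\rho_i(x)$, and it preserves degrees.

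\textbf{Step 2 (transport).} Start from a faithful $\alpha$-representation $\rho$ of $G_1$ of degree $\tau(G_1)$, with data $(\chi_i,\tilde\rho_i)$ relative to $\beta$. Put $\beta'=\beta\circ(\gamma\times\gamma)$, $\chi_i'=\chi_i\circ\eta$ and $\tilde\rho_i'=\tilde\rho_i\circ\gamma$. Then $\tilde\rho_i'$ is a $\beta'\cdot(\mathrm{tra}(\chi_i)\circ(\gamma\times\gamma))$-representation. By commutativity of Diagram~1, this cocycle is cohomologous to $\beta'\mathrm{tra}(\chi_i')$. Adjusting $\tilde\rho_i'$ by the coboundary scalar function, as in Lemma \ref{projectively equivalent to an ordinary representation}, yields a genuine $\beta'\mathrm{tra}(\chi_i')$-representation that is projectively equivalent to it and has the same degree. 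Reassembling via Remark \ref{remark for bijective correspondence via inflation}(2) gives an $\inf([\beta'])$-representation $\rho'=\oplus\rho_i'$ of $G_2$ with $\deg\rho'=\deg\rho$.

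\textbf{Step 3 (faithfulness), the main obstacle.} It remains to show that $\rho'$ is faithful, i.e.\ that $\ker\rho'=1$. The difficulty is that $\ker\rho$ is not read off directly from the data $(\chi_i,\tilde\rho_i)$ on $G_1/A_1$ and $A_1$. An element $g=a\mu(x)\in G_1$ lies in $\ker\rho$ exactly when there is a single scalar $\lambda$ with $\chi_i(a)\tilde\rho_i(x)=\lambda I$ for all $i$. I would rephrase this as: every $\tilde\rho_i(x)$ is scalar, and the ratios of these scalars are prescribed by the values $\chi_i(a)$. The transport $(x,a)\mapsto(\gamma^{-1}x,\eta^{-1}a)$ preserves this condition, up to the correction coming from the coboundary adjustment. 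That correction multiplies each $\tilde\rho_i'(x)$ by a scalar $t(x)$ independent of $i$, because the coboundary comes from $\mathrm{tra}(\chi_i)\circ(\gamma\times\gamma)$ versus $\mathrm{tra}(\chi_i\circ\eta)$. I would therefore choose these coboundaries compatibly: fix sections $\mu_j$ of $G_j\to G_j/A_j$ and a compatible choice of identifications, and extract from the commutativity of Diagram~1 a single function $t$ that is multiplicative in $\chi$. This is the delicate point. If it holds, a nontrivial element of $\ker\rho'$ maps to a nontrivial element of $\ker\rho$, which contradicts the faithfulness of $\rho$. Hence $\tau(G_2)\le\tau(G_1)$, and by symmetry equality holds.
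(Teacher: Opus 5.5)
Your Steps 1--2 are fine, but Step 3 is a genuine gap, and the repair you propose cannot work. You are right that commutativity of Diagram~1 only makes $\mathrm{tra}(\chi)\circ(\gamma\times\gamma)$ and $\mathrm{tra}(\chi\circ\eta)$ cohomologous, not equal. The paper's proof passes over exactly this point: it asserts that $\tilde\rho_i\circ\gamma$ is a $\beta'\mathrm{tra}(\psi_i)$-representation ``by the commutativity of the diagram'' and then runs the faithfulness computation you sketch.

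Write $c_1,c_2$ for the factor sets of $G_1,G_2$ with respect to $\mu,\nu$, and set $d=(c_1\circ(\gamma\times\gamma))\cdot(\eta\circ c_2)^{-1}\in Z^2(G_2/A_2,A_1)$. The correction attached to $\rho_i$ is a function $t_{\chi_i}$ with $\delta t_{\chi_i}=\chi_i\circ d$. So it is \emph{not} independent of $i$, as you claim: it is trivial for $\chi_i=1$ but not in general. Suppose the $t_\chi$ could be chosen multiplicative in $\chi$. This includes the case "up to a factor common to all $i$", since the ratios $\chi_i\chi_1^{-1}$ generate $\mathrm{Hom}(A_1,\mathbb{C}^\times)$ when $\rho|_{A_1}$ is faithful. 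Then duality would give $t_\chi=\chi\circ s$ for some $s\colon G_2/A_2\to A_1$. Hence $d$ would be a coboundary, i.e.\ there would be an isomorphism $G_2\cong G_1$ inducing $\eta$ and $\gamma$, which is the trivial case.

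Diagram~1 only says that each $\chi\circ d$ separately is a $\mathbb{C}^\times$-valued coboundary. It sees the commutators of the two extensions, not their power maps. But $\ker\rho$ is the kernel of $g\mapsto(\lambda_i(g)\lambda_1(g)^{-1})_i$ on $L=\bigcap_i\ker\rho_i$, where $\rho_i(g)=\lambda_i(g)I$. Your transport preserves $L/A_1$ and these ratios on $A_1$, but not the extension $L$ of $L/A_1$ by $A_1$.

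In fact no argument can work, because the statement is false. Let $p$ be odd, and take (with $w$, resp.\ $v$, central)
$G_1=\langle a,b,w\mid [a,b]=a^p,\ a^{p^2}=b^p=w^{p^2}=1\rangle\cong\Phi_2(21)\times\mathbb{Z}/p^2\mathbb{Z}$ and
$G_2=\langle x,y,v\mid [x,y]=v^{p^2},\ x^p=y^p=v^{p^3}=1\rangle\cong\Phi_2(1^3)\circ\mathbb{Z}/p^3\mathbb{Z}$.
Put $A_1=\langle a^p\rangle=G_1'$ and $A_2=\langle v^{p^2}\rangle=G_2'$.

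\textbf{Hypotheses hold.}
\begin{enumerate}
\item Both quotients are $(\mathbb{Z}/p\mathbb{Z})^2\times\mathbb{Z}/p^2\mathbb{Z}$. Let $\gamma$ send $\bar x,\bar y,\bar v$ to $\bar a,\bar b,\bar w$, and let $\eta(v^{p^2})=a^p$.
\item Since the quotients are abelian, a class in $\mathrm{H}^2$ is determined by its alternating form $\alpha(g,h)\alpha(h,g)^{-1}$. For $\mathrm{tra}(\chi)$ this form is $\chi$ of the commutator of lifts, and the commutator maps correspond under $\gamma,\eta$. So Diagram~1 commutes. Here $d$ is the difference of the carry cocycles in the $\bar a$- and $\bar w$-coordinates.
\item $A_1\subseteq Z^*(G_1)$ by Theorem~\ref{schur multiplier of direct product} and Theorem~\ref{epicenter}, because $M(\Phi_2(21))=1$ and $A_1=\Phi_2(21)'$.
\item $A_2\subseteq Z^*(G_2)$, because $Z^*(G_2)$ is the image of the centre of a representation group. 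In such a group, $[\hat v^{p^2},\hat g]=[\hat v,\hat g^{p}]^{p}=1$ for $g=x,y$, since $v$ is central and $x^p=y^p=1$.
\end{enumerate}

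\textbf{The $\tau$ values differ.}
\begin{enumerate}
\item $Z(G_2)=\langle v\rangle$ is cyclic, so $G_2$ has a faithful irreducible ordinary $\psi$ of degree $p$. Then $\psi\oplus 1_{G_2}$ shows $\tau(G_2)\le p+1$.
\item In $G_1$, a faithful projective representation of degree $\le p+1$ has three possible forms. It could be irreducible, which is excluded since $G_1$ is non-capable. It could be a sum of linear characters, which kills $G_1'$. Otherwise it is $\psi\oplus\lambda$ with $\psi\in\mathrm{Irr}(G_1)$ of degree $p$ and $\lambda$ linear. This is not faithful: $Z(G_1)\cong\mathbb{Z}/p\mathbb{Z}\times\mathbb{Z}/p^2\mathbb{Z}$ acts by scalars in both summands and is not cyclic.
\end{enumerate}
Hence $\tau(G_1)=p+2\neq p+1=\tau(G_2)$. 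For $p\ge5$ this is also what Theorem~\ref{p5}(v) of the paper gives.

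This is exactly where your Step~3 breaks when transporting $\psi\oplus 1_{G_2}$. There $L=Z(G_2)$ is cyclic of order $p^3$, while its counterpart $Z(G_1)$ has the same image in the quotient and the same $A$, but is not cyclic.
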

	
	\begin{proof}
		By Theorem \ref{epicenter}, inf : $\mathrm{H}^2(G_j/A_j, \mathbb{C}^{\times}) \rightarrow\mathrm{H}^2(G_j,\mathbb{C}^{\times})$ is surjective for $j=1, 2$. Let $\rho_i \in \irr^{\alpha}(G_1)$ such that $\rho= \oplus_{i=1}^l \rho_i$ be a faithful $\alpha$-representation of $G_1$ of degree $\tau(G_1)$. By Theorem \ref{direct_sum}, there exists $\tilde{\rho_i} \in \irr^{\beta\tra(\chi_i)}(G_1/A_1)$ such that $\phi(\rho_i)= \tilde{\rho_i} $. Let $\psi_i:= \bar{\eta} (\chi_i)= \chi_i \circ \eta \in \Hom(A_2, \C^\times)$.  
		Consider $\beta'\in Z^2( G_2/A_2, \C^\times)$ such that $\beta'(gA_2,hA_2)= \beta(\gamma(gA_2), \gamma(hA_2))$ for $g,h \in G_2$. Hence, $[\beta']= \bar{\gamma}([\beta])$. Let $\alpha' \in Z^2(G_2,\C^\times)$ defined by $\alpha'(g,h)= \beta'(gA_2,hA_2)$ $\forall$ $g$,$h$ $\in$ $G_2$. Then, by the commutativity of the diagram, we have $\tilde{\rho_i}':= \tilde{\rho_i} \circ \gamma \in \irr^{\beta'\tra(\psi_i)}(G_2/A_2)$, and by Theorem \ref{direct_sum}, there exists $\rho'_i \in \irr^{\alpha'}(G_2)$ such that $\phi(\rho'_i)= \tilde{\rho_i}'$. 
		
		We show that $\oplus_{i=1}^l \rho'_i$ is a faithful $\alpha'$-representation of $G_2$.
		Let $\mu: G_1/A_1 \to G_1$ and $\nu: G_2/A_2 \to G_2$ be sections of $G_1 \twoheadrightarrow G_1/A_1$ and $G_2 \twoheadrightarrow G_2/A_2$, respectively.
		Let $x \in G_2$ such that $(\oplus_{i=1}^l \rho_i')(x)= \lambda I$ for some $\lambda \in \mathbb{C}^{\times}$. Since $x= a\nu(gA_2)$
		for some $gA_2 \in G_2/A_2$, $a\in A_2$, by Remark \ref{remark for bijective correspondence via inflation}(2), $\psi_i(a) \tilde\rho_i'(gA_2)= \lambda I$ $\forall$ $1 \leq i \leq k$. This implies that $(\chi_i \circ \eta(a))(\tilde{\rho_i} \circ \gamma(gA_2))= \lambda I$ and so $\rho_i(\eta(a)\mu(\gamma(gA_2))) = \lambda I$ $\forall i$. Since $\oplus_{i=1}^l \rho_i$ is faithful on $G_1$, $\eta(a) \mu(\gamma(gA_2))=1$. As $\eta(a) \in A_1$, $\mu(\gamma(gA_2)) \in A_1$. Thus, $\gamma(gA_2) =1$. Since $\gamma$ is an isomorphism, $g \in A_2$. Now, $\eta$ is an isomorphism, forcing $a= 1$. Hence, $\oplus_{i=1}^l \rho'_i$ is faithful of degree $\tau(G_1)$. Thus, $\tau(G_2) \leq \tau(G_1)$. By similar arguments, $\tau(G_1) \leq \tau(G_2)$. This completes the proof. 
	\end{proof}
	
	\begin{remark} Let $A_j \subseteq Z(G_j) \cap G_j'$ such that the Diagram 1 given in Theorem \ref{tau(G_1) = tau(G_2)} is commutative for the isomorphisms $\eta$ and $\gamma$. Then, by similar arguments used in the proof of Theorem \ref{tau(G_1) = tau(G_2)}, it can be shown that $\delta(G_1) = \delta(G_2)$.   
	\end{remark}

Note that Theorem \ref{tau(G_1) = tau(G_2)} also provides a method for constructing many additional non-capable groups with the same projective embedding degree, as shown in the following result.

When $G_1$ and $G_2$ satisfy the hypotheses of Theorem \ref{tau(G_1) = tau(G_2)},
we say that the pair $(G_1, G_2)$ satisfies the hypotheses.
\begin{corollary} If two pairs $(G_1, G_2)$ and $(H_1, H_2)$ satisfy the hypotheses of Theorem \ref{tau(G_1) = tau(G_2)}. Then
	
	(i) $\tau(G_1 \times H_1) = \tau(G_2 \times H_2)$.
	
	(ii) For any finite abelian group $A$, $\tau(G_1 \times A) = \tau(G_2 \times A)$. 
\end{corollary}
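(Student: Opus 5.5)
The plan is to show that the pairs $(G_1\times H_1, G_2\times H_2)$ and $(G_1\times A, G_2\times A)$ again satisfy the hypotheses of Theorem \ref{tau(G_1) = tau(G_2)}, and then apply that theorem directly. For (i), let $A_j\subseteq Z^*(G_j)\cap G_j'$, $B_j\subseteq Z^*(H_j)\cap H_j'$, with isomorphisms $\eta_G,\gamma_G$ and $\eta_H,\gamma_H$ making Diagram 1 commute for each pair. Put $C_j=A_j\times B_j\subseteq G_j\times H_j$. Then $C_j$ is central and $C_j\subseteq (G_j\times H_j)'=G_j'\times H_j'$. The candidate isomorphisms are $\eta=\eta_G\times\eta_H:C_2\to C_1$ and
\[
\gamma=\gamma_G\times\gamma_H:(G_2\times H_2)/C_2\cong G_2/A_2\times H_2/B_2\to G_1/A_1\times H_1/B_1 .
\]

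Three things must be checked.

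\emph{Non-capability and $C_j\subseteq Z^*$.} By Theorem \ref{epicenter} it suffices to show that $\inf:\Ho^2((G_j\times H_j)/C_j,\C^\times)\to\Ho^2(G_j\times H_j,\C^\times)$ is surjective. Using Theorem \ref{schur multiplier of direct product}, the inflation acts componentwise on the $\Ho^2(G_j/A_j)$ and $\Ho^2(H_j/B_j)$ factors, and these components are surjective by hypothesis. On the tensor component, the map $\Hom(G_j/A_j\otimes H_j/B_j,\C^\times)\to\Hom(G_j\otimes H_j,\C^\times)$ is an isomorphism, because $A_j\subseteq G_j'$ and $B_j\subseteq H_j'$ give equal abelianizations. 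Since $C_j\neq 1$ and $C_j\subseteq Z^*$, the group $G_j\times H_j$ is non-capable.

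\emph{Commutativity of Diagram 1.} Identify $\Hom(C_j,\C^\times)=\Hom(A_j,\C^\times)\times\Hom(B_j,\C^\times)$. For product sections, a direct computation shows that the transgression of $(\chi,\chi')$ is the cocycle $(\tra(\chi),\tra(\chi'),1)$ in the decomposition of Theorem \ref{schur multiplier of direct product}. The induced $\bar\gamma$ acts componentwise and, on the tensor factor, by precomposition with an isomorphism. Commutativity then reduces to the two given squares. This compatibility of the Künneth-type decomposition with $\tra$ and with $\bar\gamma$ is the step I expect to be the main obstacle, though it should be a routine cocycle check.

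For (ii), take the pair $(A,A)$ with trivial central subgroup and identity maps and repeat the argument with $B_j=1$. The square for $A$ commutes trivially, and the surjectivity argument is unchanged: the component $\Ho^2(A)$ is inflated from itself, and the tensor component is handled as before. Here the non-trivial $A_j$ still guarantees that $A_1\times 1$ is a non-trivial subgroup of $Z^*$, so $G_j\times A$ is non-capable. Theorem \ref{tau(G_1) = tau(G_2)} then yields both equalities.
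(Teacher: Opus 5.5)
Your approach matches the paper's: you show that $(G_1\times H_1,G_2\times H_2)$ and $(G_1\times A,G_2\times A)$ again satisfy the hypotheses of Theorem~\ref{tau(G_1) = tau(G_2)} and then apply it. The paper delegates this verification to \cite{sabnam} for (i) and simply asserts it for (ii), whereas you carry it out correctly via the componentwise behaviour of inflation, transgression and $\bar\gamma$ under the decomposition of Theorem~\ref{schur multiplier of direct product}.

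The only slip is that $C_j\neq 1$ (resp.\ $A_j\neq 1$) is not among the hypotheses. If it fails, the given maps $\gamma$ are isomorphisms $G_2\cong G_1$ (and $H_2\cong H_1$), so the two products are isomorphic and the equality is trivial.
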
 

\begin{proof}
	(i) By \cite[Proof of Corollary 1.5]{sabnam}, $(G_1 \times H_1, G_2 \times H_2)$ satisfies the hypotheses of Theorem \ref{tau(G_1) = tau(G_2)}, the result follows. \\ 
	(ii) Since $(G_1 \times A, G_2 \times A)$ satisfies the hypotheses of Theorem \ref{tau(G_1) = tau(G_2)}, the result follows.  
\end{proof}

\begin{example} The groups $\Phi_5(311)$, $\Phi_5(2211)a$, $\Phi_5(2211)b, \Phi_5(21^4)c$, listed from \cite{james} have same projective embedding degree, follows from \cite[Proof of Theorem 3.4]{sabnam} and Theorem \ref{tau(G_1) = tau(G_2)}. 
\end{example}



\subsection{Finite abelian groups} \label{abelian}
In this section, we determine $\tau(G)$ for finite abelian groups. It is easy to check that $\tau(G) = \tau_{irr}(G) = 1$ if and only if $G= 1$. 
Recall that the rank of a group $G$ is defined as the minimal number of generators of $G$. Note that if $G$ is a finite abelian group of rank $n$, then $\delta(G)= n$, follows from \cite[Lemma 3.4]{moreto}.    

\begin{lemma} \label{1dim direct sum in abelian group}
	Let $G$ be a finite abelian $p$-group of rank $n$ with $|G|>1$. Suppose $\rho= \oplus_{i=1}^r \rho_i$, where $\rho_i \in \mathrm{Irr}(G)$. If $1 \leq r \leq n$, then $\rho$ is not a faithful projective representation of $G$.   
\end{lemma}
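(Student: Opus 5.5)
Write $\rho_i = \chi_i$, where each $\chi_i \in \operatorname{Hom}(G,\mathbb{C}^\times)$ is a linear character. Then $\rho(g)=\operatorname{diag}(\chi_1(g),\dots,\chi_r(g))$. Such a matrix is scalar exactly when $\chi_1(g)=\chi_2(g)=\dots=\chi_r(g)$, that is, when $(\chi_i\chi_1^{-1})(g)=1$ for all $i$. Hence the projective kernel is
\[
\ker\rho=\bigcap_{i=2}^{r}\ker(\chi_i\chi_1^{-1}).
\]
So it suffices to show that this intersection is non-trivial. When $r=1$ the intersection is empty, $\ker\rho=G\neq 1$, and we are done.

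For $r\ge 2$, set $\psi_i=\chi_i\chi_1^{-1}$ for $2\le i\le r$. These are $r-1\le n-1$ linear characters of $G$, and $\bigcap_i \ker\psi_i$ is exactly the ordinary kernel of $\psi=\oplus_{i=2}^r\psi_i$, a representation of degree $r-1$. Since $\delta(G)=n$ by \cite[Lemma 3.4]{moreto}, no ordinary representation of degree less than $n$ is faithful. As $r-1<n$, $\psi$ is not faithful, so $\ker\psi\neq 1$ and therefore $\ker\rho\neq 1$.

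To avoid depending on the cited lemma, one can argue directly. The map $g\mapsto(\psi_2(g),\dots,\psi_r(g))$ is a homomorphism from $G$ into $(\mathbb{C}^\times)^{r-1}$, and its image is a finite abelian $p$-group inside a product of $r-1$ cyclic groups, so it has rank at most $r-1<n$. If the kernel were trivial, $G$ would embed in a group of rank at most $r-1$. But the rank of a finite abelian $p$-group equals $\dim_{\mathbb{F}_p}\Omega_1$, which is monotone under taking subgroups, so this is impossible.

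The only real point is the reduction from projective kernel to ordinary kernel by dividing by $\chi_1$, which lowers the count from $r$ to $r-1$. Everything else is bookkeeping, and I do not expect any serious obstacle.
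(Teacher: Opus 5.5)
Your proof is correct, but it takes a different route from the paper's.

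The paper argues by contradiction in coordinates. It first treats $G=(\mathbb{Z}/p\mathbb{Z})^n$ with $r=n$. If $\rho$ is projectively faithful, it is in particular faithful as an ordinary representation. Hence the $n\times n$ matrix $A=(j_{lk})$ of exponents of the characters on a basis is invertible over $\mathbb{Z}/p\mathbb{Z}$. Solving $Ac=(1,\dots,1)^{\top}$ then gives a non-zero $c$ with $\rho(c)=e^{2\pi i/p}I$, a contradiction. The case $r<n$ is reduced to $r=n$ by padding with trivial characters, and a general $G$ is handled by restricting to its elementary abelian subgroup of rank $n$.

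You instead twist by $\chi_1^{-1}$. This identifies the projective kernel of $\oplus_{i=1}^r\chi_i$ with the ordinary kernel of the $r-1$ characters $\chi_i\chi_1^{-1}$, and a rank count (or $\delta(G)=n$) finishes.

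What your approach buys:
\begin{itemize}
\item It avoids coordinates, the padding step and the reduction to the elementary abelian subgroup, and treats all $r\le n$ uniformly.
\item It gives a sharper statement: $\oplus_{i=1}^r\chi_i$ is projectively faithful if and only if $\oplus_{i\ge 2}\chi_i\chi_1^{-1}$ is ordinarily faithful. So the minimal number of linear summands in a projectively faithful sum is exactly $\delta(G)+1$.
\item It does not really use that $G$ is a $p$-group.
\end{itemize}
The paper's argument has only the modest advantage of exhibiting an explicit element that acts as a non-trivial scalar.
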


\begin{proof}
	The proof is straightforward for $n=1$. So, we assume $n>1$. First, we consider $G=(\mathbb Z/p\mathbb Z)^n$. Let $e_{k}= \underbrace{(0,0, \ldots, 1, \ldots, 0)}_\text{$1$ in the $k{\text {th}}$ position} \in G$ for $1 \leq k \leq n$.
	
	We assume that $r=n$.  
	Suppose $\rho$ is a faithful projective representation of $G$. Then, for each $k$, there exists $j_{l k} \in \{0,1, \ldots, p-1\}$ ($1\leq l \leq n $) such that all $j_{l k}$'s are not equal and $$\rho(e_{k})=
	\left[\begin{array}{ccc}e^{\frac{2 \pi ij_{1k}}{p}} &  & \\
		& \ddots & \\
		& & e^{\frac{2 \pi ij_{nk}}{p}}\end{array}\right]$$
	is a diagonal matrix. Hence, $$\rho\left(m_{1}, m_{2}, \ldots, m_{n}\right)
	=\left[\begin{array}{ccc}e^{\frac{2 \pi i \sum\limits_{k=1}^{n} m_{k} j_{1 k}}{p}} &  & \\
		& \ddots & \\
		& & e^{\frac{2 \pi i \sum \limits_{k=1}^{n} m_{k} j_{n k}}{p}}\end{array}\right].$$                                            
	Since $\rho$ is also a faithful ordinary representation, $\rho\left(m_{1}, m_{2} \ldots, m_{n}\right)= I$ implies that $\left(m_{1}, \ldots, m_{n}\right)= (0,0, \ldots, 0)$, which says
	$$
	\left[\begin{array}{ccc}
		j_{11} & \cdots & j_{1 n} \\
		\vdots & &\vdots \\
		j_{n 1} & \cdots & j_{n n}
	\end{array}\right]\left[\begin{array}{c}
		x_{1} \\
		\vdots \\
		{x}_{n}
	\end{array}\right]= \left[\begin{array}{c}
		0 \\
		\vdots \\
		0
	\end{array}\right]
	$$
	has only a trivial solution in $\Z/p\Z$. Then, the matrix $A=(j_{lk})_{n\times n}$ is invertible,
	and for $b=[1,1,\cdots, 1]^{\top}$, $A x=b$ has a unique solution $c=A^{-1} b,$ where $c \neq 0$.
	Let $c^{\top}= \left[c_{1}, c_{2}, \ldots ., c_{n}\right]$, 
	$c_{i}$ $\in \{0,1, \ldots, p-1\}$.
	Then, $\sum\limits_{k=1}^{n} j_{i k} c_{k}= 1$ for $1\leq i \leq n$. Therefore, 
	\[\rho\left(c_{1}, c_{2}, \ldots, c_{n}\right)=
	\left[\begin{array}{ccc}
		e\frac{2 \pi i \sum\limits_{k=1}^n c_k j_{1k}}{p} & & \\
		& \ddots & \\
		&  & e \frac{2 \pi i \sum\limits_{k=1}^n c_k j_{nk}}{p} 
	\end{array}\right]
	= e^{\frac{2 \pi i}{p}} I_{n \times n},
	\] which contradicts our assumption. Hence, $\rho$ cannot be faithful. 
	
	If $r<n$, let $\psi= \rho \oplus (\oplus_{i=1}^{n-r} 1_G$). If $\rho$ is faithful as projective, then $\psi$ is also faithful as projective, which gives a contradiction to Case 1. Hence, the result holds if $G$ is elementary abelian. 
	Now, let $G$ be any abelian group of rank $n$ and $H= (\Z/p\Z)^n \subseteq G$. If $\rho$ is faithful on $G$, then it is also faithful on $H$, which is not possible. This completes the proof.
	
\end{proof}

\begin{lemma} \label{abelian group of small rank}
	Let $G$ be a finite abelian $p$-group of rank $n$ such that $n \le p-1$ and $|G|>1$. Then, $\tau(G)= n+1$.    
\end{lemma}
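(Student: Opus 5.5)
The plan is to show $\tau(G)\le n+1$ via Lemma \ref{relation}, and then show that no faithful projective representation of $G$ has degree $\le n$. For the first part, $\delta(G)=n$ for a finite abelian group of rank $n$ (the remark preceding Lemma \ref{1dim direct sum in abelian group}, from \cite[Lemma 3.4]{moreto}). Hence Lemma \ref{relation} gives $\tau(G)\le n+1$ at once.

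For the lower bound, I take a faithful $\alpha$-representation $\rho$ of $G$ with $\deg\rho\le n$. By Lemma \ref{projectively equivalent to an ordinary representation} and Lemma \ref{projectively equivalent}, I may fix one cocycle representative in $[\alpha]$, and I split into two cases.

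\emph{Case $[\alpha]=[1]$.} Then $\alpha=1$ and $\rho$ is an ordinary representation. Since $G$ is abelian, $\rho$ is a direct sum of $r=\deg\rho\le n$ one-dimensional representations. Lemma \ref{1dim direct sum in abelian group} says such a sum is never faithful as a projective representation, which is a contradiction. The lemma needs $r\ge 1$ and $|G|>1$; both hold here.

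\emph{Case $[\alpha]\neq[1]$.} Write $\rho$ as a direct sum of irreducible $\alpha$-representations; this is possible by complete reducibility of $\alpha$-representations over $\C$. Let $G_0$ be the subgroup of $\alpha$-regular elements. By Lemma \ref{properties of abelian groups}, $\alpha|_{G_0\times G_0}$ is a coboundary, so $G_0=G$ would force $[\alpha]=[1]$. Therefore $G_0\ne G$. Every irreducible $\alpha$-representation then has degree $\sqrt{|G/G_0|}$, which is a power of $p$ greater than $1$, hence at least $p$. So $\deg\rho\ge p\ge n+1$, contradicting $\deg\rho\le n$. This is the only place the hypothesis $n\le p-1$ is used.

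Combining the two cases gives $\tau(G)\ge n+1$, and with the upper bound, $\tau(G)=n+1$. The main point needing care is the second case: one must confirm that a nontrivial cohomology class forces $G_0\neq G$, and that $\sqrt{|G/G_0|}$ is an integral power of $p$. Both follow directly from Lemma \ref{properties of abelian groups}, since the degree is an integer and $|G/G_0|$ is a power of $p$. The rest is routine.
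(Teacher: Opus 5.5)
Your proof is correct and takes essentially the same route as the paper. Both get the upper bound from $\delta(G)=n$ and Lemma~\ref{relation}, and both get the lower bound by noting that degree at most $n\le p-1$ forces every irreducible constituent to be one-dimensional. That makes $[\alpha]$ trivial and reduces to Lemma~\ref{1dim direct sum in abelian group}. The only difference is that you argue the contrapositive (nontrivial $[\alpha]$ gives degree at least $p$) and make explicit, via Lemma~\ref{properties of abelian groups}, the $p$-power-degree fact that the paper leaves implicit.
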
 

\begin{proof}
	Since $\delta(G)=n$, by Lemma \ref{relation}, we obtain that $\tau(G) \le n+1$. 
	Let $\rho$ be a faithful $\alpha$-representation of $G$ of degree $k$, where $k =\tau(G)$. 
	If $k< n+1$, then $\rho = \oplus_{i=1}^k\rho_i$, where each $\rho_i \in \irr^\alpha(G)$ has degree $1$. Since $\rho_is$ are one-dimensional, $\alpha$ is a coboundary. Therefore, we can assume that $\rho$ is an ordinary representation. 
	However, by Lemma \ref{1dim direct sum in abelian group}, $k> n$. Hence, the result follows.
\end{proof}
\begin{theorem} \label{elementary abelian p-group}
	Let $G \cong (\mathbb Z/p\Z)^{n}$. Then, the following holds. \\
	(i) $\tau(G) = n+1$ if $G \ncong 1, (\Z/2\Z)^2$ and $(\Z/2\Z)^4$.  \\
	(ii) $\tau(G)= n$ if $G \cong 1, (\Z/2\Z)^2$ or $ (\Z/2\Z)^4$.
\end{theorem}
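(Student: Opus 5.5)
The plan is to prove the upper bound $\tau(G)\le n+1$ directly, and then show that a faithful projective representation of degree $\le n$ can exist only in the listed exceptional cases. The upper bound is immediate: $\delta(G)=n$, so Lemma \ref{relation} gives $\tau(G)\le n+1$. For the lower bound I take a faithful $\alpha$-representation $\rho=\oplus_{i=1}^r\rho_i$ of minimal degree, with $\rho_i\in\irr^\alpha(G)$.

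\textbf{Step 1: the case $[\alpha]=1$.} All $\rho_i$ are one-dimensional ordinary representations. Lemma \ref{1dim direct sum in abelian group} then forces $r\ge n+1$.

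\textbf{Step 2: the case $[\alpha]\ne 1$.} Let $G_0$ be the subgroup of $\alpha$-regular elements. By Lemma \ref{properties of abelian groups}, $\alpha|_{G_0\times G_0}$ is a coboundary and every $\rho_i$ has degree $p^m$, where $|G/G_0|=p^{2m}$ and $m\ge1$.

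\emph{Step 2a: restriction to $G_0$.} After changing $\alpha$ within its class so that it is trivial on $G_0\times G_0$, each $g\in G_0$ commutes projectively with all of $G$. Its image therefore lies in the centre of the twisted group algebra, so by Schur $\rho_i(g)=\chi_i(g)I$ for a linear character $\chi_i$ of $G_0$. Hence $\rho|_{G_0}$ is projectively equivalent to $p^m$ copies of $\oplus_i\chi_i$. Faithfulness of $\rho$ on $G_0\cong(\Z/p\Z)^{n-2m}$ together with Lemma \ref{1dim direct sum in abelian group} forces $r\ge n-2m+1$ whenever $n>2m$; trivially $r\ge1$ when $n=2m$.

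\emph{Step 2b: elements outside $G_0$.} For $g\notin G_0$ there is $h$ with $\alpha(g,h)\ne\alpha(h,g)$. Then $\rho_i(g)$ and $\rho_i(h)$ commute only up to a nontrivial scalar, so $\rho_i(g)$ is not scalar. Thus $\ker\rho\subseteq G_0$, and the condition in Step 2a is exactly faithfulness.

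\emph{Step 2c: the minimum.} The minimal degree with $[\alpha]\ne1$ is therefore $p^m(n-2m+1)$ for $2m<n$, or $p^m$ for $2m=n$. This is attained by choosing the $\chi_i$ as in the faithful case of Lemma \ref{abelian group of small rank}, combined with Frucht's symmetric type and Theorem \ref{projective irreducible embedding degree of finite abelian groups} for the $G/G_0$ part.

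\textbf{Step 3: comparison with $n+1$.} This elementary comparison is where the exceptions emerge, and I expect it to be the main point needing care.
\begin{itemize}
\item For $2m<n$: the inequality $p^m(n-2m+1)\ge n+1$ holds for all $p\ge2$, $m\ge1$. It suffices to check $p=2$. For $m=1$ it reads $2(n-1)\ge n+1$, true for $n\ge3$. For larger $m$ the factor $2^m$ dominates, since $n\ge 2m+1$.
\item For $2m=n$: we compare $p^m$ with $2m+1$. The inequality $p^m<2m+1$ holds only for $(p,m)=(2,1)$ and $(2,2)$, i.e.\ $G\cong(\Z/2\Z)^2$ and $(\Z/2\Z)^4$. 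In these cases $\tau(G)\le p^m=n$, and the bounds above show $\tau(G)$ is exactly $n$. In every other case the minimum is at least $n+1$; note $p=3,m=1$ gives equality $3=3$.
\end{itemize}
The trivial group is handled by convention. Combining the three steps yields (i) and (ii).
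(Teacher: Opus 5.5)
Your proposal is correct and follows essentially the paper's route. Both arguments take the upper bound from Lemma~\ref{relation}. For the lower bound, both use the uniform degree $p^m$ and restrict to the $\alpha$-regular subgroup $G_0$, combined with Lemma~\ref{1dim direct sum in abelian group}, so that the exceptions come out of an elementary inequality. Your explicit bound $p^m(n-2m+1)$ (or $p^m$ when $n=2m$) is a tidier version of the paper's $n=kp^s+r$ bookkeeping, and it also gives the lower bound for $(\Z/2\Z)^4$ directly, where the paper instead uses the subgroup $(\Z/2\Z)^3$. One correction on the trivial group: it is not a matter of convention, since $\tau(1)=1=n+1$, so its inclusion in (ii) is an imprecision in the statement itself.
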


\begin{proof}
	By Lemma \ref{abelian group of small rank}, it is enough to consider $n \geq p$. Let $\rho$ be a faithful $\alpha$-representation of $G$ of degree $\tau(G)$ such that $\rho = \oplus_{i=1}^m\rho_i$, where $\rho_i \in \mathrm{Irr}^\alpha(G)$. Then, there is an $s \geq 0$ such that $\operatorname{deg}(\rho_i) = p^s$ for all $i$, follows from Lemma \ref{properties of abelian groups}. Since $\delta(G)= n$, by Lemma \ref{relation}, $\tau(G) \leq n+1$. First, assume that $\tau(G) < n+1$. \\
	
	$(i)$ Let $s \geq 1$. Then $n = kp^s+r$ for some $ k \geq 0$ and $0 \leq r < p^s$. If $k=0$, then $n=r <p^s \leq \operatorname{deg}(\rho)= \tau(G)$. So, we assume $k \ge 1$. If $m \geq (k+1)$, then $\operatorname{deg}(\rho) > n$. Hence, we consider $m \leq k$. 
	By Lemma \ref{properties of abelian groups}, $\alpha|_{G_0\times G_0}$ is a coboundary, so $\rho|_{G_0}$ is projectively equivalent to an ordinary representation $\rho'$ of $G_0$ and we may assume that $\rho' = \oplus_{i=1}^m \deg(\rho_i)\rho_i'$, where $\rho_i' \in \mathrm{Hom}(G_0, \mathbb C^\times)$. 
	Since $\rho|_{G_0}$ is faithful, $\rho'$ is also faithful by Lemma \ref{projectively equivalent}. Therefore, $\oplus_{i=1}^m \rho_i'$ is faithful on $G_0$. Since $G_0 \cong (\mathbb Z/p\Z)^{n-2s}$ (by Lemma \ref{properties of abelian groups}), it follows from Lemma \ref{1dim direct sum in abelian group} that rank$(G_0)=n-2s < m \leq k$ which is true only for the following cases: 
	\begin{enumerate}
		\item $p=2$, $s=2$, $k=1$, $r=0$, i.e., $G \cong (\Z/2\Z)^4$. 
		\item $p=2$, $s=1$, $k=1$, $r=0$, i.e., $G \cong (\Z/2\Z)^2$.  
	\end{enumerate}
	Thus, we conclude that for $G \ncong (\Z/2\Z)^2$ and $(\Z/2\Z)^4$, $\rho$ cannot be faithful when $s \geq 1$ and $\operatorname{deg}(\rho) < n+1$. \\
	For $s=0$, $\rho$ is a direct sum of one-dimensional representations, so by Lemma \ref{1dim direct sum in abelian group}, $\operatorname{deg}(\rho) \geq$ $n+1$. Hence, $\tg = n + 1$ when $G \ncong 1, (\Z/2\Z)^2, (\Z/2\Z)^4$. \\
	
	$(ii)$ If $G \cong (\Z/2\Z)^2$, by Theorem \ref{projective irreducible embedding degree of finite abelian groups}, $\tg=2$. 
	
	If $G \cong (\Z/2\Z)^4$, taking a subgroup $H =(\Z/2\Z)^3$, we have $\tau(H) = 4$ $\leq $ $\tg$. Now, by Theorem \ref{projective irreducible embedding degree of finite abelian groups}, $\tg = 4$.
	
\end{proof}

\begin{theorem}\label{projective embedding degree of finite abelian groups}
	Let $G$ be a finite abelian group of rank $n$ which is not elementary abelian. Then $\tau(G) = n + 1$.
	
\end{theorem}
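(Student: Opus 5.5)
The upper bound comes from Lemma~\ref{relation}: by \cite[Lemma 3.4]{moreto} we have $\delta(G)=n$, so $\tau(G)\le n+1$. The work is the lower bound. Suppose $\rho$ is a faithful $\alpha$-representation of degree $k\le n$, and aim for a contradiction. Write $\rho=\oplus_{i=1}^m\rho_i$ with $\rho_i\in\mathrm{Irr}^\alpha(G)$. By Lemma~\ref{properties of abelian groups}, every $\rho_i$ has the same degree $d=\sqrt{|G/G_0|}$, where $G_0$ is the subgroup of $\alpha$-regular elements. So $k=md$, and $G/G_0\cong B\times B$ with $|B|=d$; here I use Frucht's result, since $\alpha$ descends to a nondegenerate class on $G/G_0$. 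Fix a prime $p$ for which the Sylow $p$-subgroup $P$ of $G$ has rank $n$. Every restriction of a faithful projective representation to a subgroup is again faithful, so I will work on $P$ and on $P_0:=P\cap G_0$.

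\emph{Case $d=1$.} Then $\alpha$ is a coboundary, and $\rho$ is a sum of $m\le n$ linear characters. Restricting to $P$ gives a faithful sum of at most $n=\mathrm{rank}(P)$ linear characters of $P$, which contradicts Lemma~\ref{1dim direct sum in abelian group}.

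\emph{Case $d\ge 2$.} As in the proof of Theorem~\ref{elementary abelian p-group}, $\rho|_{G_0}$ is projectively equivalent to $\oplus_{i=1}^m d\,\rho_i'$ with $\rho_i'$ linear characters of $G_0$. Hence $\oplus_i\rho_i'$ is faithful on $G_0$, and therefore faithful on $P_0$. Let $r$ be the rank of the Sylow $p$-part $B_p$ of $B$. Then $P/P_0\cong B_p\times B_p$, which gives $\mathrm{rank}(P_0)\ge n-2r$ and $d\ge |B_p|\,|B_{p'}|\ge p^r|B_{p'}|$. If $P_0\neq 1$, Lemma~\ref{1dim direct sum in abelian group} forces $m\ge n-2r+1$. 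Combined with $md\le n$, this gives
\[
(n-2r+1)\,p^{r}\,|B_{p'}|\le n .
\]
This inequality fails except in a short list of small cases, essentially $p=2$, $B_{p'}=1$ and $(n,r)\in\{(2,1),(4,2)\}$ together with their degenerations. These are exactly the cases in which $(\Z/2\Z)^2$ and $(\Z/2\Z)^4$ were exceptional in Theorem~\ref{elementary abelian p-group}. If instead $P_0=1$, then $P\cong B_p\times B_p$ has order $d_p^2$, so $d\ge p^{n/2}$; this exceeds $n$ except when $p=2$ and $n\in\{2,4\}$, and there $P$ is forced to be homocyclic of exponent $2$ or larger.

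The main obstacle is to eliminate these residual configurations by using the hypothesis that $G$ is not elementary abelian. I would first sharpen the estimates. When $P$ is not elementary, either $P_0$ contains an element of order $p^2$ or $B_p$ does, and I would check which. In the first situation I would rerun the argument on $\Omega_1$ and on a cyclic subgroup of order $p^2$ to improve the rank bound by one. In the second, $|B_p|\ge p^{r+1}$, so $d$ grows. When $P$ is elementary abelian, $G$ must have a nontrivial Sylow $q$-subgroup $Q$ for some $q\ne p$. Either $Q$ meets $G/G_0$ nontrivially, so $|B_{p'}|\ge q\ge 3$ and the displayed inequality fails, or $Q\subseteq G_0$. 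In the latter case I would use the faithfulness of $\oplus\rho_i'$ on $P_0\times Q$: the linear characters must be simultaneously nontrivial on $Q$ and must separate $P_0$ modulo scalars, which forces one more summand than Lemma~\ref{1dim direct sum in abelian group} gives for $P_0$ alone. I would verify each residual case, $(n,r)=(2,1)$ and $(4,2)$ with $p=2$, directly by counting in this way. That shows $k\le n$ is impossible, and hence $\tau(G)=n+1$.
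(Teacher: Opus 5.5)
Your set-up is sound, and it takes a genuinely different route from the paper. The paper never analyses a putative small representation of $G$ itself. It uses only that restriction preserves faithfulness: $\tau(G)\ge\tau(H)$ for $H=(\mathbb{Z}/p\mathbb{Z})^n\le G$ with $p\mid m_n$, and Theorem~\ref{elementary abelian p-group} finishes unless $p=2$ and $n\in\{2,4\}$. In those two cases it uses the slightly larger subgroup $\mathbb{Z}/2q\mathbb{Z}\times(\mathbb{Z}/2\mathbb{Z})^{n-1}$.

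The gap is in your last paragraph, which you yourself call the main obstacle. It is only a plan, and its key dichotomy is false. The claim ``if $P$ is not elementary, then $P_0$ or $B_p$ contains an element of order $p^2$'' already fails for $G=P=\langle x\rangle\times\langle y\rangle\cong\mathbb{Z}/4\mathbb{Z}\times\mathbb{Z}/2\mathbb{Z}$ with the non-trivial class, where $\alpha(x,y)\alpha(y,x)^{-1}=-1$. There $G_0=P_0=\langle x^2\rangle$, $B\cong\mathbb{Z}/2\mathbb{Z}$, $(n,r)=(2,1)$, and both $P_0$ and $B_p$ are elementary. This is one of your residual configurations, for a group covered by the theorem, and none of your proposed moves applies to it. Likewise, ``faithfulness on $P_0\times Q$ forces one more summand than Lemma~\ref{1dim direct sum in abelian group} gives for $P_0$'' is false in general. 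For instance, $\chi\oplus 1$ with $\chi$ a faithful linear character of $\mathbb{Z}/2\mathbb{Z}\times\mathbb{Z}/3\mathbb{Z}$ is projectively faithful with only two summands. That claim holds only when $P_0=1$.

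The residual cases come from weakening Lemma~\ref{1dim direct sum in abelian group}. If $P_0\neq1$, the lemma gives $m>\operatorname{rank}(P_0)\ge1$, so $m\ge\max(2,\,n-2r+1)$. Then $md\ge 2\cdot 2^r>2r=n$ in both residual cases, so $P_0=1$. In that case $d\le n$ forces $B=B_p\cong(\mathbb{Z}/2\mathbb{Z})^{n/2}$, $d=n$, $m=1$ and $P\cong(\mathbb{Z}/2\mathbb{Z})^n$. Since $G/G_0$ is a $2$-group and $P\cap G_0=1$, $G_0$ is exactly the odd part $Q$ of $G$. With $m=1$, your own observation that $\rho|_{G_0}$ is projectively equivalent to $d\rho_1'$ shows $\rho(G_0)$ consists of scalars. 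Faithfulness then gives $Q=1$, so $G\cong(\mathbb{Z}/2\mathbb{Z})^n$ is elementary abelian, a contradiction. With this repair your direct argument is complete, and it also yields part (i) of Theorem~\ref{elementary abelian p-group}. The paper's route is shorter only because it can quote that theorem.
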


\begin{proof}
	Let $G$ $\cong$ $\mathbb{Z}/{m_1}\Z \times \mathbb{Z}/{m_2}\Z \times \cdots \times \mathbb{Z}/{m_n}\Z$, where $m_{i+1} \mid m_i$ for $1 \leq i \leq n-1$. Let $p$ be the smallest prime dividing $m_n$. 
	
	Suppose $p = n= 2$. Then $o(G) > 4$ and consider $H_1 = \Z/2q_1\Z \times \Z/2\Z \subseteq G$ for prime $q_1$ such that $2q_1|m_1$. Since $\delta(H_1)= 2$, by Lemma \ref{relation}, $\tau(H_1) \leq 3$. Let $\psi$ be a faithful projective representation of $H_1$ of degree $\tau(H_1)$. Since $H_1$ is not of symmetric type, $\psi$ is not irreducible. Hence, using Lemma \ref{1dim direct sum in abelian group}, we have $\tau(H_1) >2$, and thus $3 =\tau(H_1)$ $\leq$ $\tau(G)$. Since $\delta(G)= 2$, Lemma \ref{relation} yields that $\tg = 3= n+1$. 
	
	If $p =2, n= 4$, then $o(G) > 16$ and consider $H = \Z/2q_2\Z \times$ $(\Z/2\Z)^3$, where $q_2$ is a prime such that $2q_2|m_1$. Since $\delta(H)= 4$, by Lemma \ref{relation},  $\tau(H) \leq 5$. Let $\psi$ be a faithful $\alpha$-representation of $H$ of degree $\tau(H)$. 
	Since $H$ is not of symmetric type, $\psi$ is not irreducible. Using Lemma \ref{1dim direct sum in abelian group}, if deg$(\psi) <5$, we can assume $\psi= \oplus_{i=1}^2\psi_i$ such that $\operatorname{deg}(\psi_i)=2$. Let $H_0$ be the set of $\alpha$-regular elements of $H$. Then, $H_0= \Z/2q_2\Z \times$ $(\Z/2\Z)^2$ or $H_0= \Z/q_2\Z \times$ $(\Z/2\Z)^3$. Since rank$(H_0)>2$, proceeding along the same lines as proof of Theorem \ref{elementary abelian p-group}$(i)$, we conclude that $\psi$ cannot be faithful. Hence, $\tau(H) = 5 \leq$ $\tau(G)$. Since $\delta(G)= 4$, $\tg = 5= n+1$ in view of Lemma \ref{relation}. 
	
	In all other cases, taking $H = (\Z/p\Z)^n$ $\subseteq$ $G$, by Theorem \ref{elementary abelian p-group}, we have $\tau(G)$ $\geq \tau(H) = n + 1$. Since $\delta(G)= n$, Lemma \ref{relation} yields that $\tg = n + 1$. 
	
\end{proof}

\subsection{Extra-special $p$-groups and Heisenberg groups} \label{EH} 
In this section, we study $\tau(G)$ and $\tau_{irr}(G)$ for extra-special $p$-groups and Heisenberg groups. We observe that Theorem \ref{direct_sum} plays a crucial role in these computations.


We first recall the following part from \cite[Page 772]{HIGGS}, which will be used in the proof of the results presented in this section. 
Let $\alpha$ be a cocycle of an abelian group $G$. Define a map $\alpha' \colon G \times G \to \mathbb{C}^{\times}$ by $\alpha'(x,y) ={\alpha(x,y)}{\alpha(y,x)^{-1}}$ $\forall$ $x,y$ $\in G$. Then, $\alpha'$ is independent of the choice of cocycle representative in $[\alpha]$. Suppose that
$$G \cong \langle x_1 \rangle \times \cdots \times \langle x_n \rangle \cong (\Z/p^k\Z)^n,$$ where each $x_i$ has order $p^k$. Let $\omega$ be a primitive $p^k$th root of unity. Then, for all $i,j \in \{1,\ldots,n\}$, we can write
\[
\alpha'(x_i,x_j)=\omega^{c(i,j)}.
\]
We define the alternating $n \times n$ matrix $C(\alpha)= [c(i,j)]$ over $\Z/p^k\Z$ representing $[\alpha]$. Then, the map $[\alpha] \mapsto C(\alpha)$ is an isomorphism from $\mathrm{H^2}(G, \mathbb{C}^{\times})$ onto the group $R(p^k,n)$ of all alternating $n \times n$ matrices over $\Z/p^k\Z$, under matrix addition.
Let $x= x_1^{b_1}\dots x_n^{b_n}\in G$. Then, $x$ is an $\alpha$-regular element if and only if $C(\alpha)b= 0$, where $b^{\top}= [b_1,...,b_n]$.
Furthermore, if $k=1$, then $\operatorname{rank}\big(C(\alpha)\big) = 2r$,
where $p^r$ is the common degree of all irreducible $\alpha$-representations of $G$. 

\begin{remark} \label{ker(C(alpha))} Note that $|\operatorname{ker}\big(C(\alpha)\big)| = |G_0|$.   
\end{remark}

In the following result, we determine $\tau_{irr}(G)$ and $\tau(G)$ for extra-special $p$-groups of order $p^{2n+1}$, with $n \geq 1$.
Since groups of order $p^3$ will be discussed in Section \ref{groupsoforderp5}, we consider $n> 1$ here.

\begin{theorem} \label{extraspecial $p$-group}
	Let $G$ be an extra-special $p$-group of order $p^{2n+1}$, where $n>1$. Then, $\tau_{irr}(G)$ does not exist and  
	\begin{equation}\nonumber
		\tau(G) =
		\begin{cases}
			2p^{n/2}, & \text{if n is even }  \\
			p^{(n+1)/2}+p^{(n-1)/2},  & \text{if n is odd } 
		\end{cases}
	\end{equation}
\end{theorem}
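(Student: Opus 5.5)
Since $n>1$, Lemma \ref{capability of extra special $p$-group} shows that $G$ is not capable. Corollary \ref{non-capable groups} then gives at once that $\tau_{irr}(G)$ does not exist. For $\tau(G)$, put $Z=Z(G)=G'$, which has order $p$, so $Z=Z^*(G)$. By Theorem \ref{epicenter}, inflation $\mathrm{H}^2(G/Z,\C^\times)\to \mathrm{H}^2(G,\C^\times)$ is surjective, with $G/Z\cong (\Z/p\Z)^{2n}$. So every cocycle may be taken as $\alpha(g,h)=\beta(gZ,hZ)$ for some $\beta\in Z^2(G/Z,\C^\times)$.

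By Theorem \ref{direct_sum}, each $\rho_i\in\irr^\alpha(G)$ corresponds to an irreducible $\beta\,\tra(\chi_i)$-representation of $G/Z$, with $\chi_i=\rho_i|_Z$ and $\chi_i\in\Hom(Z,\C^\times)$. Since $Z(G)$ is cyclic, case (1) of Theorem \ref{direct_sum} applies: $\oplus\rho_i$ is faithful if and only if $\oplus\chi_i$ is faithful on $Z$ as a projective representation. Because $|Z|=p$, this means that at least two of the $\chi_i$ are distinct. The argument of Corollary \ref{non-capable groups} shows why a single $\chi$ never suffices: then $\rho|_Z$ is scalar. Hence $\tau(G)$ is the minimum of $\deg\rho_1+\deg\rho_2$ over $\beta$ and over $\chi_1\neq\chi_2$.

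To compute the degrees, I use the matrix description $C(\cdot)$ over $\F_p$ from \cite{HIGGS} recalled above. First I will check that $C(\tra(\chi))=c\,S$, where $S$ is the nondegenerate alternating matrix of the commutator form of $G$ and $\chi$ corresponds to $c\in\F_p$. This comes from $\tra(\chi)(x,y)\tra(\chi)(y,x)^{-1}=\chi([\mu(x),\mu(y)])$ up to sign conventions, and it is the step needing the most care. Then each irreducible $\beta\tra(\chi_i)$-representation has degree $p^{r_i}$, where $2r_i=\operatorname{rank}(B+c_iS)$ and $B=C(\beta)$.

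For the lower bound, $(B+c_1S)-(B+c_2S)=(c_1-c_2)S$ has rank $2n$. Subadditivity of rank gives $r_1+r_2\ge n$. The minimum of $p^{r_1}+p^{r_2}$ subject to $r_1+r_2\ge n$ is attained at $r_1,r_2$ as equal as possible. That yields $2p^{n/2}$ for $n$ even and $p^{(n+1)/2}+p^{(n-1)/2}$ for $n$ odd.

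For the upper bound, split $\F_p^{2n}$ as an orthogonal sum of symplectic subspaces for $S$, of dimensions $2a$ and $2b$ with $a=\lfloor n/2\rfloor$ and $b=\lceil n/2\rceil$, so $S=S_1\oplus S_2$. Take $B=0\oplus(-S_2)$, $c_1=0$ and $c_2=1$. Then $B+c_1S=0\oplus(-S_2)$ has rank $2b$, and $B+c_2S=S_1\oplus 0$ has rank $2a$. Since the map $[\alpha]\mapsto C(\alpha)$ is onto, such a $\beta$ exists. Choosing one irreducible representation for each $\chi_i$ and lifting via Remark \ref{remark for bijective correspondence via inflation}(2) gives a faithful representation of degree $p^a+p^b$, which proves the formula.

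The main obstacle is the transgression computation $C(\tra\chi)=cS$ together with the bookkeeping that all summands share the same $\beta$. Everything else is linear algebra over $\F_p$.
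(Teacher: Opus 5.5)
Your proposal is correct and follows the paper's proof. Both reduce, via surjective inflation and Theorem \ref{direct_sum} with $A=Z(G)$ cyclic, to a pair of irreducible $\beta\,\mathrm{tra}(\chi_1)$- and $\beta\,\mathrm{tra}(\chi_2)$-representations of $G/Z(G)$ with $\chi_1\neq\chi_2$, get $r_1+r_2\ge n$ from rank subadditivity and the nondegeneracy of $C(\mathrm{tra}(\chi))$, and realize $p^{\lfloor n/2\rfloor}+p^{\lceil n/2\rceil}$ by a block choice of $C(\beta)$. The differences are minor: you get non-capability from the extra-special capability lemma and surjectivity of inflation from $Z^*(G)=Z(G)$ rather than from the central-product argument, and you derive $C(\mathrm{tra}(\chi))=cS$ directly from the commutator pairing instead of quoting the explicit transgression formula, which is valid and more self-contained.
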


\begin{proof}
	By \cite[Chapter 4, Theorem 7.1]{karpilovsky}, extra-special $p$-groups are central products of their subgroups of order $p^3$. Hence, by Lemma \ref{central product} and Corollary \ref{non-capable groups}, $\tau_{irr}(G)$ does not exist. 
	
	From the proof of Lemma \ref{central product}, it follows that $\mathrm{inf}: \mathrm{H}^2(G/Z(G), \C^{\times}) \to \mathrm{H}^2(G, \C^{\times})$ is surjective.  
	Let $Z(G) = \langle z \rangle \cong \Z/p\Z$ and $G/Z(G) = \prod_{k=1}^{2n}\langle x_i \rangle \cong (\Z/p\Z)^{2n}$. 
	Recall the transgression map 
	$$\mathrm{tra} : \mathrm{Hom}(Z(G), \mathbb{C}^{\times}) \to \mathrm{H}^2(G/Z(G), \mathbb{C}^{\times})$$ from \cite[Proof of Theorem 1.7]{sabnam}, which is
	defined as follows. For $X = \prod_{k=1}^{2n} x_k^{i_k}$,
	$Y = \prod_{k=1}^{2n} x_k^{j_k} $ in $G/Z(G)$, $$\mathrm{tra}(\chi)(X, Y) = \chi(z)^{-\sum_{k=1}^n j_k i_{n+k}}$$ for  $\chi \in \mathrm{Hom}(Z(G), \mathbb{C}^{\times})$. Let $\omega$ be a primitive $p$th root of unity. Then $\chi(z)= \omega^r$ for some $r\in \{0,1,\dots,p-1\}$.
	Hence, $C(\mathrm{tra}(\chi))= 
	\begin{bmatrix}
		0 & rI_{n \times n} \\
		-rI_{n \times n} & 0
	\end{bmatrix}$.
	It is easy to see that $\text{rank}\big(C(\mathrm{tra}(\chi)\big) = 2n$ for $r\neq 0$. 
	
	Let $\tau(G)= m$ and $\rho_i \in \mathrm{Irr}^\alpha(G)$ such that $\rho = \oplus_{i=1}^l\rho_i$ be a faithful $\alpha$-representation of $G$ of degree $m$. By Theorem \ref{direct_sum}, taking $A= Z(G)$, we have $\tilde\rho_i \in \mathrm{Irr}^{\beta\mathrm{tra}(\chi_i)}\big(G/Z(G)\big)$, where $\chi_i \in  \Hom(Z(G), \mathbb{C}^{\times})$, such that $\oplus_{i=1}^l\chi_i$ is faithful on $Z(G)$. 
	Furthermore, we can assume that $\chi_i \neq \chi_j$ for $i \neq j$.
	Since $\chi_1 \oplus \chi_2$ is faithful on $Z(G)$, we have $l=2$. If $\beta_1= \beta\mathrm{tra}(\chi_1)$, then $\beta\mathrm{tra}(\chi_2)= \beta_1\mathrm{tra}(\chi_3)$, where $\chi_3= \chi_1^{-1}\chi_2 \in \mathrm{Hom}(Z(G), \mathbb{C}^{\times})$. Hence, to compute $\tau(G)$, it is enough to find 
	\[\operatorname{min}\Biggl\{
	\begin{array}{cl} \mathrm{deg}(\tilde{\rho}) \mid & \tilde{\rho}= \tilde{\rho}_1 \oplus \tilde{\rho}_2,\ \tilde{\rho}_1 \in \irr^{\beta_1}(G/Z(G)) \text{ and } \tilde{\rho}_2 \in \mathrm{Irr}^{\beta_1\tra(\chi)}(G/Z(G)),\\
		& \chi\in\operatorname{Hom}(Z(G),\mathbb{C}^{\times}), \chi \neq 1,[\beta_1] \in \mathrm{H}^2\big(G/Z(G), \C^{\times}\big)
	\end{array}
	\Bigg\}\]
	which is equivalent to consider
	\[\operatorname{min}\Biggl\{
	\begin{array}{cl}  \text{rank}\big(C(\beta_1)\big)+\text{rank}\Big(C\big(\beta_1\mathrm{tra}(\chi)\big)\Big)\mid & [\beta_1] \in \mathrm{H}^2(G/Z(G), \C^{\times}),\\
		& \chi \in \operatorname{Hom}(Z(G),\mathbb{C}^{\times}), \chi \neq 1 \end{array}
	\Bigg\}.\] 
	Let $\operatorname{deg}(\rho_1)=\operatorname{deg}(\tilde{\rho}_1)= p^{k_1}$ and $\operatorname{deg}(\rho_2)=\operatorname{deg}(\tilde{\rho}_2)=p^{k_2}$.
	Now, $\text{rank} \big(C(\beta_1)\big) +\text{rank} \Big(C\big(\beta_1\mathrm{tra}(\chi)\big)\Big)=$
	$\text{rank}\big(C(\beta_1)\big)+\text{rank}\Big(C\big(\beta_1\big)+C\big(\mathrm{tra}(\chi)\big)\Big)$
	$\geq \text{rank}\Big(C\big(\mathrm{tra}(\chi)\big)\Big)=2n$. Therefore, $k_1+k_2 \geq n$. It is easy to check that 
	$$\operatorname{min}\{p^{k_1}+p^{k_2}\mid k_1+k_2 \geq n\} = p^{\lfloor{n/2}\rfloor} +p^{\lceil{n/2}\rceil}.$$  
	Taking $C(\beta_1)=
	\begin{bmatrix}
		0 & 
		\begin{bmatrix}
			-L & 0 \\[4pt]
			0 & 0
		\end{bmatrix}
		\\[10pt]
		\begin{bmatrix}
			L & 0 \\[4pt]
			0 & 0
		\end{bmatrix}
		& 0
	\end{bmatrix},
	\quad \text{where} \quad L = I_{\left\lfloor \frac{n}{2} \right\rfloor \times \left\lfloor \frac{n}{2} \right\rfloor}$ and $\chi \in  \operatorname{Hom}(Z(G),\mathbb{C}^{\times})$ given by $\chi(z^i)=\omega^{i}$, we get $\operatorname{deg}(\rho) = p^{\frac{\mathrm{rank}(C(\beta_1))}{2}}+p^{\frac{\mathrm{rank}(C(\beta_1 tra(\chi)))}{2}}
	=p^{\lfloor{n/2}\rfloor} + p^{\lceil{n/2}\rceil}$. Hence, the result follows.
	
\end{proof}
Note that while writing a presentation of a group, we omit relations of the form $[x, y] = 1$ for generators $x, y \in G$.  
Our next aim is to describe $\tau(G)$ and $\tau_{irr}(G)$ for the Heisenberg groups $G = H_{2n+1}(\Z/p^k\Z)$, which admit the presentation: 
$$H_{2n+1}(\Z/p^k\Z)= \langle x_i, y_i, z, 1\leq i \leq n \mid [x_i, y_i] = z, x_i^{p^k} = y_i^{p^k} = z^{p^k} = 1 \rangle.$$
\begin{theorem} \label{Heisenberg group}
	Let $p$ be an odd prime. 
	\begin{enumerate}
		\item Suppose $G$ is a $p$-group such that $G' \subseteq Z(G)$ and $Z(G)$ is cyclic. If $\tau_{irr}(G)$ exists, then $G \cong H_{3}(\Z/p^k\Z)$ and in that case, $\tau_{irr}(G)= \tau(G)= p^k$.  
		
		\item If $G= H_{2n+1}(\Z/p^k\Z)$ with $n > 1$, then $\tau_{irr}(G)$ does not exist and $\tau(G) \leq p^{k\lfloor{n/2}\rfloor} + p^{k\lceil{n/2}\rceil}$.
	\end{enumerate}
	
\end{theorem}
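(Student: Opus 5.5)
For part (1), I would start from the hypothesis that $\tau_{irr}(G)$ exists, so $G$ has a faithful irreducible $\alpha$-representation $\rho$ for some $\alpha$. By Theorem \ref{cohomological characterization}, $[\alpha]$ is not inflated from $G/N$ for any non-trivial central $N$; in particular $G$ is capable by Corollary \ref{non-capable groups}.

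Since $G'\subseteq Z(G)$, the group is of class at most two. Let $Z(G)=\langle z\rangle$ be cyclic of order $p^k$ and let $A$ be its unique subgroup of order $p$. The commutator pairing $G/Z(G)\times G/Z(G)\to G'\subseteq Z(G)$ is non-degenerate and alternating, so $G/Z(G)$ is abelian of symmetric type. I would then use the classification of capable class-two $p$-groups with cyclic centre, or argue directly with the epicentre (Theorem \ref{epicenter}), to show that capability forces $G/Z(G)\cong(\Z/p^k\Z)^2$ with $G'=Z(G)$. This gives $G\cong H_3(\Z/p^k\Z)$.

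The direct route for that last step would mimic Lemma \ref{central product}. If $G/Z(G)$ has rank at least $4$, then $G$ decomposes as a central product of two subgroups whose derived subgroups both contain $A$. Lemma \ref{central product} then makes $G$ non-capable. Rank $2$ with $G/Z(G)\cong(\Z/p^j\Z)^2$, $j<k$, must also be excluded. There $Z(G)$ properly contains $G'$, and I would show that $\mathrm{inf}$ from $G/A$ is surjective. Using Lemma \ref{exact sequence} and a Schur multiplier computation for these metacyclic-type groups, this again contradicts capability. I expect this classification step to be the main obstacle.

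For $H=H_3(\Z/p^k\Z)$ I would compute the degree of $\rho$. Here $H/Z(H)\cong(\Z/p^k\Z)^2$, and $\rho$ corresponds to a $\beta\,\mathrm{tra}(\chi)$-representation of $H/Z(H)$ via Theorem \ref{direct_sum}. Faithfulness forces $\chi$ to be faithful on $Z(H)$ by Corollary \ref{restriction on A is faithful}. This makes the class non-degenerate on $(\Z/p^k\Z)^2$, so Lemma \ref{properties of abelian groups} and Theorem \ref{projective irreducible embedding degree of finite abelian groups} give degree $p^k$; hence $\tau_{irr}(H)=p^k$.

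For $\tau(H)=p^k$, I would show that any faithful projective representation has degree at least $p^k$. Restricting to the abelian subgroup $\langle x_1,y_1\rangle Z$ or using the lift $\rho$ works: a sum of smaller-degree irreducibles has each $\chi_i$ non-faithful on $Z(H)$. Since $Z(H)$ is cyclic, each such $\chi_i$ is trivial on $A$, so the sum is not faithful by Theorem \ref{direct_sum}(1).

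For part (2), $G=H_{2n+1}(\Z/p^k\Z)$ with $n>1$ is a central product of $n$ copies of $H_3(\Z/p^k\Z)$ amalgamated over $Z=\langle z\rangle$. Lemma \ref{central product} then shows it is non-capable, and Corollary \ref{non-capable groups} shows $\tau_{irr}(G)$ does not exist.

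For the upper bound I would imitate Theorem \ref{extraspecial $p$-group}, taking $A=Z(G)$ so that $G/A\cong(\Z/p^k\Z)^{2n}$ with transgression matrix $\begin{bmatrix}0&rI\\-rI&0\end{bmatrix}$. I would choose $\beta_1$ whose alternating matrix cancels the transgression on the first $\lfloor n/2\rfloor$ symplectic pairs, with $\chi$ a faithful character of $Z$. The two summands then have degrees $p^{k\lfloor n/2\rfloor}$ and $p^{k\lceil n/2\rceil}$, since over $\Z/p^k\Z$ the degree equals $\sqrt{|G/G_0|}$ (Lemma \ref{properties of abelian groups}, Remark \ref{ker(C(alpha))}). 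Their $\chi$'s are $\chi_1,\chi_1\chi$, which together are faithful on the cyclic $Z$, so Theorem \ref{direct_sum}(1) gives faithfulness and the stated bound.
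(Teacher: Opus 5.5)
Your part (2) follows the paper's argument and is fine. Part (1), however, has genuine gaps.

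\textbf{The computation of $\tau_{irr}(H)$ for $H=H_3(\Z/p^k\Z)$.} This is the central gap. Theorem~\ref{direct_sum} (with $A=Z(H)$) only concerns classes $[\alpha]$ inflated from $H/Z(H)$. By Theorem~\ref{cohomological characterization}, such an $\alpha$ never has a faithful irreducible representation. In fact here $\mathrm{Hom}(Z(H),\C^\times)\cong\Z/p^k\Z\cong\mathrm{H}^2((\Z/p^k\Z)^2,\C^\times)$. So by Lemma~\ref{exact sequence} the transgression is an isomorphism, and inflation from $H/Z(H)$ has trivial image. What you are actually computing are the ordinary irreducible representations of $H$ of degree $p^k$ with faithful central character, and $Z(H)$ acts on these by scalars.

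Your appeal to Corollary~\ref{restriction on A is faithful} misreads the paper's convention. ``Faithful on $A$'' means projectively faithful, and $\rho|_{Z(H)}=\chi\cdot I$ never is. So the corollary shows such $\rho$ is \emph{not} faithful. The faithful irreducible projective representations come from classes not inflated from any $H/N$: the cocycles with $\lambda$ or $\mu$ a primitive $p^k$th root of unity, i.e.\ those for which $1$ is the only $\alpha$-regular central element. The paper obtains their degree $p^k$ by lifting to the representation group $H^*$ of order $p^{5k}$ (Theorem~\ref{bijective correspondence}) and applying Clifford theory over $N=\langle\bar z,z_1,z_2\rangle$.

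\textbf{The lower bound $\tau(H)\ge p^k$.} This inherits the same problem. For the non-inflated classes you need that degree computation. For the inflated classes, your claim that every irreducible of degree $<p^k$ has central character trivial on $A$ is exactly what must be proved. The paper proves it by an explicit Clifford analysis in $H^*$ for $\chi'$ of order $p^r$, $0<r<k$. Also, $\langle x_1,y_1\rangle Z$ is all of $H$, not an abelian subgroup.

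\textbf{The classification step.} The paper simply takes this from \cite[Proposition 7.6]{NG}, and your sketch of it is not sound. The rank $\ge 4$ reduction via Lemma~\ref{central product} is fine. But the final inference ``$G/Z(G)\cong(\Z/p^k\Z)^2$ and $G'=Z(G)$, hence $G\cong H_3(\Z/p^k\Z)$'' is false. Take $\Phi_2(21)\cong\Z/p^2\Z\rtimes\Z/p\Z$, or more generally $\Z/p^{2k}\Z\rtimes\Z/p^k\Z$ with $x\mapsto x^{1+p^k}$. These have cyclic centre equal to $G'$ and $G/Z(G)\cong(\Z/p^k\Z)^2$, yet have exponent $p^{2k}$, so they are not Heisenberg groups. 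They must be excluded by a separate argument.

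The case $G/Z(G)\cong(\Z/p^j\Z)^2$ with $j<k$ is only announced, not argued. Moreover, those groups need not be metacyclic: the central product of $H_3(\Z/p\Z)$ with $\Z/p^2\Z$ needs three generators.
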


\begin{proof}(1)  
	From the proof of \cite[Proposition 7.6]{NG}, it follows that if $G' \subseteq Z(G)$, $Z(G)$ is cyclic and $\tau_{irr}(G)$ exists, then $G \cong H_{3}(\Z/p^k\Z)$.
	Now, by \cite[Theorem 1.2]{hatui}, consider the representation group of $G= H_{3}(\Z/p^k\Z)$ given by
	$$G^* = \langle \x, \y, \z, z_1, z_2 \mid [\x, \y] = \z, [\z, \x] = z_1, [\z, \y] = z_2, 
	\x^{p^k} = \y^{p^k} = \z^{p^k} = 1 \rangle. $$
	Then, $G^*/\langle z_1, z_2 \rangle \cong G$. Let $A= \langle z_1, z_2 \rangle$.
	By \cite[Lemma 2.2]{hatui}, any cocycle of $G$ is cohomologous to a cocycle $\alpha$ of the form 
	\[
	\alpha\big(z^{m_1}y^{n_1}x^{q_1}, z^{m_2}y^{n_2}x^{q_2}\big) = \lambda^{(m_2q_1 + n_2 \frac{q_1(q_1-1)}{2})} \mu^{(n_1m_2+q_1 \frac{n_2(n_2-1)}{2}+q_1n_1n_2)} , 
	\]
	for some $\lambda, \mu \in \mathbb{C}^{\times}$ such that $\lambda^{p^k} = \mu^{p^k} = 1$.  
	It is easy to check that $1$ is the only $\alpha$-regular element of $Z(G)$ if and only if either $\lambda$ or $\mu$ is a primitive $p^k$th root of unity. 
	Thus, by Lemma \ref{faithful projective representation}, $G$ admits a faithful irreducible $\alpha$-representation only for those $\alpha$ having $\lambda$ or $\mu$ a primitive $p^k$th root of unity. Let $S$ be the set consisting of all such $[\alpha]$. 
	Therefore, by Theorem \ref{bijective correspondence} we have
	\begin{eqnarray*}
		\tau_{irr}(G)&=&\operatorname{min}\{\operatorname{deg}(\operatorname{\rho})\mid \rho \in \mathrm{Irr}^\alpha(G) \text{ for } [\alpha] \in S\}  \\
		&=&\operatorname{min}\{\operatorname{deg}(\operatorname{\Gamma}) \mid \Gamma \in \mathrm{Irr}(G^*\mid \chi), \chi\in \Hom(A, \C^\times) \text{ such that } \operatorname{tra}(\chi) \in S\}.
	\end{eqnarray*}
	Define a set $T=\{\chi \in \Hom(A,\C^\times) \mid \chi(z_1) \text{ or } \chi(z_2) \text{ is a primitive } p^k\text{th root of unity} \}$.
	First, we show that $\operatorname{tra}(\chi) \in S$ if and only if $\chi \in T$. If $\chi^{p^m}=1$ for some $m<k$, then $\operatorname{tra}(\chi)^{p^m}=1$.   
	Hence, if $\operatorname{tra}(\chi) \in S$, then $\chi \in T$. Conversely, let $\chi \in T$ and $\nu: G \to G^*$ be a section defined by $\nu(z^{m}y^{n}x^{q}) = \z^{m}\y^{n}\x^{q}$. Then, 
	$$\operatorname{tra}(\chi)(x, z) = \chi(\nu(x)\nu(z)\nu(xz)^{-1})= \chi(\x\z( \z\x)^{-1})= \chi(\z_1^{-1}) \text { and }$$ 
	$$\operatorname{tra}(\chi)(y, z) = \chi(\nu(y)\nu(z)\nu(yz)^{-1})= \chi(\y\z( \z\y)^{-1})= \chi(\z_2^{-1}).$$ 
	Therefore, $\operatorname{tra}(\chi) \in S$.
	Hence, $$\tau_{irr}(G)= \operatorname{min}\{\operatorname{deg}(\operatorname{\Gamma}) \mid \Gamma \in \mathrm{Irr}(G^*\mid \chi), \chi \in T\}.$$
	Consider the abelian normal subgroup $N = \langle \z, z_1, z_2 \rangle$ of $G^*$ of order $p^{3k}$. Let $\chi \in T$. WLOG, we assume that $\chi(z_1) = \chi(z_2)^a$, where $\chi(z_2)$ is a primitive $p^k$th root of unity and $0 \leq a \leq (p^{k}-1)$. 
	Let $\chi_1 \in \text{Irr}(N)$ such that $\chi_1|_{\langle z_1, z_2 \rangle} = \chi$. It is easy to check that $I_{G^*}(\chi_1)= \langle N,\x^{l}\y^{-al}\rangle$ and $G^*/I_{G^*}(\chi_1)$ is cyclic of order $p^k$. It also follows that, for $\Gamma \in \operatorname{Irr}(G^* \mid \chi)$, deg$(\Gamma) = p^k$ by Theorem \ref{Clifford's theory}. Hence, $\tau_{irr}(G)= p^k$. 
	
	
	
	Now we claim that $\tau(G) = p^k$. 
	Since $\tau_{irr}(G)= p^k$, $\tau(G) \leq p^k$. 
	WLOG, we assume that $\chi'(z_1) = \chi'(z_2)^a$, where $\chi'(z_2)$ is a primitive $p^r$th root of unity, $0 <r <k$ and $0 \leq a \leq (p^{r}-1)$.  Let $\chi \in \text{Irr}(N)$ such that $\chi|_{\langle z_1, z_2 \rangle} = \chi'$.
	It is easy to check that $I_{G^*}(\chi) = \langle N,\x^{l}\y^{-al},\y^{p^r} \rangle$. 
	Let $N_1= \langle N,\x^{l}\y^{-al} \rangle$. As $N_1/N$ is cyclic, $\chi$ extends to some $\bar{\chi}\in \Hom(N_1, \C^\times)$ by Theorem \ref{Clifford's theory}. Note that $I_{G^*}(\chi)/N_1 = \langle \y^{p^r}N_1 \rangle$. 
	
	
	First, we study the dimensions and behaviour of $\Gamma \in \operatorname{Irr}(G^* \mid \chi)$ as $\Gamma$ is a lift of $\rho \in \irr^\alpha(G)$ for $\alpha= \tra(\chi')$. Let $n$ be the smallest positive integer such that $r \leq n\leq k$ and $\bchi^{(\y^{p^n})}(\x\y^{-a}) = \bchi(\x\y^{-a})$. 
	Then, we can check that $\bchi^{(\y^{p^n})}(\x^l\y^{-al}) = \bchi(\x^l\y^{-al})$. Since $\bchi^{(\y^{p^n})}(\z) = \bchi(\z z_2^{-p^n})= \bchi(\z)$, $\bchi$ extends to $N_2= \langle N,\x^l\y^{-al}, \y^{p^n} \rangle =\langle N,\x\y^{-a}, \y^{p^n} \rangle$. Let $\sigma: N_2 \to \C^\times$ be such that $\sigma|_{N_1} = \bchi$. 
	Note that $I_{G^*}(\bar{\chi}) =  N_2$. Hence, by Theorem \ref{Clifford's theory}, every $\psi \in \operatorname{Irr}(I_{G^*}(\chi) \mid \bar{\chi})$ is of the form $\psi= \operatorname{Ind}^{I_{G^*}(\chi)}_{N_2}(\sigma)$ and $\Gamma = \operatorname{Ind}^{G^*}_{{I_{G^*}(\chi)}}(\psi)$. Hence, $\operatorname{deg}(\Gamma)= p^n$. Note that $\{\y^i$, $i=1,2,..,p^r\}$ and $\{\y^{ip^r}$, $i=1,2,..,p^{n-r}\}$ are sets of left coset representatives of $I_{{G}^*}(\chi)$ in $G^*$ and of $N_2$ in $I_{{G}^*}(\chi)$, respectively. Then, 
	$$\psi(\z)= \left[\begin{array}{cccc}
		\chi(\z)\chi(z_2)^{-p^r} & & & \text{\huge0} \\
		& \chi(\z)\chi(z_2)^{-2p^r} & & \\
		& & \ddots & \\
		\text{\huge0} & & & \chi(\z)\chi(z_2)^{-p^n} 
	\end{array}\right]_{p^{n-r} \times p^{n-r}}= \chi(\z)I_{p^{n-r} \times p^{n-r}}$$ and  $$\Gamma(\z)= \left[\begin{array}{ccc}
		\psi(\z)\psi(z_2)& & \text{\huge0} \\
		& \ddots & \\
		\text{\huge0} &  & \psi(\z)\psi(z_2^{p^r})
	\end{array}\right]_{p^n \times p^n} = \chi(\z)\left[\begin{array}{ccc}
		\psi(z_2)& & \text{\huge0} \\
		& \ddots & \\
		\text{\huge0} &  & \psi(z_2^{p^r})
	\end{array}\right]_{p^n \times p^n}.$$ 
	
	\bigskip
	
	Now, we show that if $\alpha= \tra(\chi')$ and $\rho$ is an $\alpha$-representation of $G$ with $\deg(\rho)< p^k$, then $\rho$ cannot be faithful.
	Let $\rho = \oplus_{i=1}^m \rho_i$, where $\rho_i \in \mathrm{Irr}^\alpha(G)$ and $\deg(\rho_i)<p^k$. Let $\Gamma_i \in \irr(G^* \mid \chi')$ correspond to $\rho_i$, where the correspondence is given by Theorem \ref{bijective correspondence}. Then, $\Gamma_i = \operatorname{Ind}^{G^*}_{{I_{G^*}(\chi_i)}}(\psi_i)$, where $\chi_i \in \text{Irr}(N)$ 
	such that $\chi_i |_A= \chi'$, and $\psi_i=$ $\operatorname{Ind}^{I_{G^*}(\chi_i)}_{N_2}(\sigma_i)$ for $\sigma_i \in \Hom(N_2, \C^\times)$ such that $\sigma_i|_{N_1} = \chi_i$. Let $r \leq n_i < k$ be the smallest positive integer such that $\chi_i^{(\y^{p^{n_i}})}(\x\y^{-a}) = \chi_i(\x\y^{-a})$. By viewing the above matrices, $\rho_i(z^{p^{k-1}})= \Gamma_i(\z^{p^{k-1}})= \chi_i(\z^{p^{k-1}})$. If none of $\chi_i(z)$ is a primitive $p^k$th root of unity, then $\rho(z^{p^{k-1}})= I$ and $\rho$ is not faithful.
	Hence, for $\rho|_{Z(G)}$ to be faithful, at least one of the $\chi_i(\z)$ has to be a primitive $p^k$th root of unity.
	Now, for any $n \geq r$, we have $\chi_i^{(\y^{p^{n}})}(\x\y^{-a}) = \chi_i(\x\y^{-a}\z^{-p^n}z_1^{-p^n\binom{1}{2}}z_2^{ap^n+\binom{p^k-p^n}{2}})=\chi_i(\x\y^{-a} \z^{-p^n})$.
	Then, $\chi_i^{(\y^{p^{n_i}})}(\x\y^{-a}) \neq \chi_i(\x\y^{-a})$, which is a contradiction. Hence, $\rho|_{Z(G)}$ is not faithful. In view of Theorem \ref{restriction}, $\rho$ cannot be faithful. 
	
	
	Now, let $\chi' \in \text{Irr}(\langle z_1, z_2 \rangle)$ such that $\chi'(z_1) = \chi'(z_2) = 1$. Then, $\tra(\chi')= 1$. Let $\rho\in \irr(G)$ be faithful. Since $\dg= p^k$ (\cite[Theorem 3.1, (8)]{kaur}), $\deg(\rho) \geq p^k$. This completes the proof. \\
	
	
	
	
	
	
	(b) For $n > 1$, the group $H_{2n+1}(\Z/p^k\Z)$ is a central product of $H_{2n-1}(\Z/p^k\Z)$ and $H_{3}(\Z/p^k\Z)$. Therefore, by Lemma \ref{central product} and Corollary \ref{non-capable groups}, $\tau_{irr}(G)$ does not exist, and the proof of the Lemma \ref{central product} also shows that $\mathrm{inf}: \mathrm{H}^2(G/Z(G), \C^{\times}) \to \mathrm{H}^2(G, \C^{\times})$ is surjective. Since $Z(G) = \langle z \rangle \cong \Z/p^k\Z$,  let $G/Z(G) = \prod_{k=1}^{2n}\langle a_i \rangle \cong (\Z/p^k\Z)^{2n}$. Using arguments similar to those in \cite[Proof of Theorem 1.7]{sabnam}, one can check that the transgression map $\mathrm{tra} : \mathrm{Hom}(Z(G), \mathbb{C}^{\times}) \to \mathrm{H}^2(G/Z(G), \mathbb{C}^{\times})$ is defined as follows. For $X = \prod_{k=1}^{2n} a_k^{i_k}$, $Y = \prod_{k=1}^{2n} a_k^{j_k} $ in $G/Z(G)$, $$\mathrm{tra}(\chi)(X, Y) = \chi(z)^{-\sum_{k=1}^n j_k i_{n+k}}$$ for $\chi \in \mathrm{Hom}(Z(G), \mathbb{C}^{\times})$. Let $\omega$ be a primitive $p^k$th root of unity. Then, $\chi(z)= \omega^r$ for some $r\in \{0,1,\dots,p^k-1\}$.
	Hence, $C\big(\mathrm{tra}(\chi)\big)= 
	\begin{bmatrix}
		0 & rI_{n \times n} \\
		-rI_{n \times n} & 0
	\end{bmatrix}$.
	
	Let $\tau(G)= m$ and $\rho$ be a faithful $\alpha$-representation of $G$ of degree $m$. Then, $\rho= \oplus_{i=1}^l\rho_i$, where $\rho_i \in \irr^{\alpha}(G)$. By Theorem \ref{direct_sum} (taking $A= Z(G)$), $\tilde\rho_i \in \mathrm{Irr}^{\beta\mathrm{tra}(\chi_i)}\big(G/Z(G)\big)$, where $\chi_i \in  \Hom(Z(G), \mathbb{C}^{\times})$ such that $\oplus_{i=1}^l\chi_i$ is faithful on $Z(G)$. 
	Furthermore, we can assume that $\chi_i \neq \chi_j$ for $i \neq j$. If $\chi_i(z)$ is not a primitive $p^k$th root of unity for any $i$, then $\oplus_{i=1}^l\chi_i$ cannot be faithful on $Z(G)$. Hence, WLOG, let $\chi_1(z)$ be a primitive $p^k$th root of unity. Since $\chi_1 \oplus \chi_2$ is faithful on $Z(G)$, we have $l=2$. 
	Hence, it is enough to find 
	
	\[\operatorname{min}\Biggl\{
	\begin{array}{cl}  \mathrm{deg}(\tilde{\rho}) \mid & \tilde{\rho}=\tilde{\rho}_1 \oplus \tilde{\rho}_2,\ \tilde{\rho}_1 \in \irr^{\beta\tra(\chi_1)}(G/Z(G)) \text{ and }\tilde{\rho}_2 \in \mathrm{Irr}^{\beta \tra(\chi_2)}(G/Z(G)), \\
		& \chi_i \in  \operatorname{Hom}\big(Z(G),\mathbb{C}^{\times}\big), i=1,2 \text{ and } \chi_1(z) \text{ is a primitive } p^k\text{th root of unity} 
	\end{array}
	\Bigg\}\] 

	Let $C(\beta)=
	\begin{bmatrix}
		0 & 
		\begin{bmatrix}
			-L & 0 \\[4pt]
			0 & 0
		\end{bmatrix}
		\\[10pt]
		\begin{bmatrix}
			L & 0 \\[4pt]
			0 & 0
		\end{bmatrix}
		& 0
	\end{bmatrix}
	\quad \text{, where} \quad L = I_{\left\lfloor \frac{n}{2} \right\rfloor \times \left\lfloor \frac{n}{2} \right\rfloor}$. Let $\chi \in  \operatorname{Hom}(Z(G),\mathbb{C}^{\times})$ be defined by $\chi(z^i)= \omega^{i}$. Let $\tilde{\rho_1}$ and $\tilde{\rho_2}$ be irreducible $\beta$ and $\beta\tra(\chi)$-representations of $G/Z(G)$. Then, by Remark \ref{ker(C(alpha))} and Lemma \ref{properties of abelian groups}, $\deg(\tilde{\rho_1}) = p^{k{\lfloor{n/2}\rfloor}}$ and $\deg(\tilde{\rho_2})= p^{k{\lceil{n/2}\rceil}}$. 
	Let $\alpha(g,h) = \beta(gZ(G), hZ(G))$. Then, $\alpha \in Z^2(G, \C^\times)$. Let $\rho_i \in \irr^\alpha(G)$ correspond to $\tilde{\rho_i}$, $i=1,2$, where the correspondence is given by Remark \ref{remark for bijective correspondence via inflation}. Since $1|_{Z(G)} \oplus \chi$ is faithful on $Z(G)$, Theorem \ref{direct_sum} implies that $\rho_1 \oplus \rho_2$ is faithful on $G$. Hence, $\tg \leq p^{k{\lfloor{n/2}\rfloor}} + p^{k{\lceil{n/2}\rceil}}$. 
\end{proof}
\section{Examples} \label{additional} 


In this section, we discuss several examples illustrating $\tau(G)$ and $\tau_{irr}(G)$ for certain classes of groups $G$, particularly metacyclic groups, finite simple groups and symmetric groups. 
We begin by recalling the following facts, which will be useful in discussing the examples.
\begin{enumerate}
	\item  If $G= D_{2n}$ or $Q_{4n}$, then $\delta(G)= 2$ by \cite[Example 2.2]{kaur}, where $D_{2n}$ and $Q_{4n}$ denote the Dihedral group of order $2n$ and the Quaternion group of order $4n$, respectively.
	\item If $G= S_n, n \geq 5$ or $A_n, n \geq 6$, then $\delta(G) = \delta_{irr}(G)= n-1$ (\cite[Example 2.4, 2.5, 2.6]{kaur}).
	\item  For $n \geq 4$, $M(S_n) \cong \Z/2\Z$ by \cite[Chapter 4, Theorem 8.3(i)]{karpilovsky}. 
	\item For $[\alpha] \neq 1$, every $\alpha$-representation has even degree by \cite[Chapter 5, Theorem 5.3]{karpilovsky3}. 
	
	\item A metacyclic $p$-group $G$, $p \ge 3$, has a faithful irreducible projective representation if and only if $G$ is of symmetric type, i.e., $G$ can be written as a semidirect product of two isomorphic cyclic subgroups  (see \cite [Theorem 5.2]{NG}).
\end{enumerate}

\textbf{Example 5.1.} \label{tau(G) = 2}
By \cite[Section 1.1]{Dolgachev}, the finite subgroups of $\mathrm{PGL}_2(\C)$ are isomorphic to $\Z/n\Z$, $D_{2n}$, $A_4$, $A_5$ or $S_4$. Hence, $\tg = 2$ if and only if $G$ is one of these groups. 
From Lemma \ref{one dim direct sum in non-abelian group}(1), it follows that if $G$ is non-abelian and $\tg = 2$, then $\ti= 2$. Also, if $\ti= 2$, then $\tg= 2$ and so $G$ is one of the groups $\Z/n\Z$, $D_{2n}$, $A_4$, $A_5$ or $S_4$. Since $\Z/n\Z$ is not of symmetric type, $\tau_{irr}(\Z/n\Z)$ does not exist. Hence, $\ti =2$ if and only if $G$ is one of the groups $D_{2n}$, $A_4$, $A_5$ or $S_4$. \\

\textbf{Example 5.2.} \label{tau(G) = 3}
Let $G$ be a group with $\dg= 2$ such that $G \ncong D_{2n}$. By Lemma \ref{relation}, $\tg \leq 3$. From the character table of $S_4$(\cite[18.1]{liebeck}) and by the proof of Lemma \ref{one dim direct sum in non-abelian group}(1), it follows that $\delta(S_4)=3$. Now,  it follows from Example 5.1 and \cite[Examples 2.2, 2.3 and 2.5]{kaur} that $\tg= 3$.  \\

\textbf{Example 5.3.} 
Let $G$ be a metacyclic $p$-group ($p \ge 3$) of symmetric type. Then, $\tau_{irr}(G)= \sqrt{|G|}$ in view of \cite[Chapter 10, Proof of Theorem 4.16]{karpilovsky}.  \\

\textbf{Example 5.4.}  \label{Q4n}
By Example 5.2, $\tau(Q_{4n})=3$. Moreover, \cite[Chapter 10, Theorem 4.14]{karpilovsky} implies that $\tau_{irr}(Q_{4n})$ does not exist.     \\

\textbf{Example 5.5.} \label{simple group}
Let $G$ be a non-abelian finite simple group.
If $\rho$ is a projective representation of $G$ which is not a direct sum of trivial $1$-dimensional representations, then $\rho$ is faithful. Hence, $\tau(G) = \tau_{irr}(G)$ = minimal degree of an irreducible projective representation greater than $1$.   \\

\textbf{Example 5.6.} \label{tau(G) leq delta(G)}
For $G=S_n$ ($n \geq 3$) or $A_n$ ($n\geq 4$), it follows from Lemma \ref{one dim direct sum in non-abelian group}$(3)$ that $\tau(G) \leq \delta(G)$. \\

\textbf{Example 5.7.} \label{S5}
Consider $G = S_5$. Let $\tau(G)= m$ and $\rho$ be a faithful projective representation of degree $m$. If $\rho$ is an ordinary representation, then $\deg(\rho) \geq \delta(G)= 4$. From the spin character table of $G$ (see \cite[Chapter 5, Theorem 6.8]{karpilovsky3}), it follows that $\deg(\rho) \geq 4$. Hence, $\tau(G) = 4$, in view of Example 5.6. By Lemma \ref{one dim direct sum in non-abelian group}$(2)$, $\tau_{irr}(G) \leq 4$. Again, using the spin character table of $G$, we conclude that $\tau_{irr}(G)= 4$.  \\

\textbf{Example 5.8.} 
Consider $G= S_6$. By Lemma \ref{one dim direct sum in non-abelian group}$(2)$, $\tau_{irr}(G) \leq 5$. Let $\psi$ be a faithful irreducible $\alpha$-representation of $G$ of degree $\tau_{irr}(G)$. If $\alpha= 1$, since $\di=5$, $\deg(\psi) \geq 5$. Let $\rho_6$ be the basic spin representation of $S_6^*$ (See \cite[5.5.B.]{karpilovsky3} for definition). It follows from \cite[Chapter 5, Corollary 5.8, Theorem 5.12]{karpilovsky3} that as a projective representation $\rho_6$ is irreducible and faithful on $S_6$ with $\deg(\rho_6)= 4$. Since any $\alpha$-representation has even degree for $[\alpha] \neq 1$, Example 5.1 yields that $\tau_{irr}(G)= 4$. Now, we have $\tau(G) \leq \tau_{irr}(G)= 4$. If $\rho$ is an ordinary representation of degree $m$ which is faithful as a projective representation, then $m \geq \delta(G)= 5$. Therefore, Example 5.1 implies that $\tau(G)= 4$. \\ 

\textbf{Example 5.9} 
$\tau(A_6)= \tau_{irr}(A_6)= 3$ follows from \cite[Table 2]{gonzalo} using Example 5.1 and Example 5.5.     \\ 

\textbf{Example 5.10.} 
Consider $G = A_7$. Since $S_5 \subseteq A_7$, $\tau(G) \geq 4$ in view of Example 5.7. Thus, $\tau(G) = \tau_{irr}(G) =4$ by virtue of \cite[Table 3]{gonzalo} and Example 5.5.    \\

\textbf{Example 5.11.} 
Consider $G = A_8 \cong \mathrm{PSL}_4(\F_2)$. By \cite[Lemma 5.2]{LANDAZURI}, $\tau(G) \geq 7$. Thus, $\tau(G) = \tau_{irr}(G) = 7$ due to Example 5.6 and Example 5.5.    \\

\textbf{Example 5.12.} 
Consider $G = A_9$. Since $A_8 \subset A_9$, $\tau(G) \geq 7$. Using Example 5.6, we obtain that $\tau(G) \leq 8$. Let $\rho$ be a faithful projective representation of degree $\tau(G)=m$. If $\deg(\rho)=7$, then $\rho$ has to be an ordinary representation. But $\delta(G) = 8$ and so $\tau(G) > 7$. Hence, it follows from Example 5.5 that $\tau(G) = \tau_{irr}(G)= 8$.   \\

The values of $\ti$ and $\tg$ obtained in the above examples are summarized in the following table. If $\tau_{irr}(G)$ does not exist for a group $G$, then we denote this by writing $\tau_{irr}(G)=$ -.  

\begin{table}[H]
	\centering
	\small
	\begin{tabular}{|c|c|c|}
		\hline
		Group & $\tau(G)$ & $\tau_{irr}(G)$ \\
		\hline
		$D_{2n}$ & $2$ & $2$ \\
		$Q_{4n}$ & $3$ & - \\
		$A_4$ & $2$ & $2$ \\
		$A_5$ & $2$ & $2$ \\
		$A_6$ & $3$ & $3$ \\
		$A_7$ & $4$ & $4$ \\
		$A_8$ & $7$ & $7$ \\
		$A_9$ & $8$ & $8$ \\
		$S_4$ & $2$ & $2$ \\
		$S_5$ & $4$ & $4$ \\
		$S_6$ & $4$ & $4$ \\
		\hline
	\end{tabular}
	\caption{Some examples} 
	\end{table}
	
	\section{$\tau(G)$ and $\tau_{irr}(G)$ for non-abelian groups of order $p^n$, $3\leq n \leq 5$} \label{groupsoforderp5}
	In this section, we compute $\tau(G)$ and $\tau_{irr}(G)$ for non-abelian $p$-groups of order $p^3$, $p^4$ for all primes $p$, and for groups of order $p^5$ for primes $p\geq 5$. 
The next theorem derives the minimum number of elements in $\mathrm{Irr}(G)$ such that their direct sum yields a faithful projective representation of $G$. 
\begin{lemma} \label{direct sum of k irreducible ordinary representations}
Let $G$ be a non-abelian $p$-group such that the rank of $Z(G)$ is $r$. Suppose $\rho_i \in \mathrm{Irr}(G)$ are such that $\oplus_{i=1}^k \rho_i$ is a faithful projective representation of $G$. 
\begin{enumerate}
	\item[(i)] If $Z(G) \not \cong (\Z/2\Z)^2, (\Z/2\Z)^4$, then $k >r$.
	\item[(ii)] If $Z(G) \cong (\Z/2\Z)^2, (\Z/2\Z)^4$, then $k >r-1$.
\end{enumerate}

\end{lemma}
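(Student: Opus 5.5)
The plan is to reduce everything to the centre and then apply the abelian results. By Lemma \ref{restriction}$(2)$, the faithful projective representation $\rho=\oplus_{i=1}^k\rho_i$ restricts to a faithful projective representation of $Z(G)$. Lemma \ref{restriction} is stated for irreducible $\rho$, so the first step is to justify that it also applies to this direct sum. For ordinary representations this is the standard argument: $\ker\rho$, in the projective sense, is a normal subgroup. Indeed, if $\rho(g)=\lambda I$ then $\rho(xgx^{-1})=\lambda I$, so $\ker\rho$ is closed under conjugation. In a $p$-group, a nontrivial normal subgroup meets $Z(G)$ nontrivially. Hence $\rho|_{Z(G)}$ faithful forces $\rho$ faithful, and the converse is trivial. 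For this lemma we only need the easy direction: if $\rho$ is faithful on $G$, then $\rho|_{Z(G)}$ is faithful projectively.

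Next, decompose the restriction. Each $\rho_i$ is an irreducible ordinary representation, so by Schur's lemma $\rho_i|_{Z(G)}=\deg(\rho_i)\,\lambda_i$ for a linear character $\lambda_i$ of $Z(G)$. Thus $\rho|_{Z(G)}=\oplus_i \deg(\rho_i)\lambda_i$. A direct sum of linear characters is scalar at $z$ exactly when all the values $\lambda_i(z)$ coincide, and repeating a character does not change that condition. Therefore the projective faithfulness of $\rho|_{Z(G)}$ is equivalent to that of $\lambda=\oplus_{i=1}^k\lambda_i$, an ordinary direct sum of $k$ one-dimensional representations of the abelian $p$-group $Z(G)$ of rank $r$. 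Note $Z(G)\neq 1$ since $G$ is a nontrivial $p$-group.

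For part (i), Lemma \ref{1dim direct sum in abelian group} applied to $Z(G)$ immediately gives that $k\le r$ is impossible, so $k>r$. This argument in fact holds for every abelian $p$-group, with no exceptions. Consequently (i) holds a fortiori when $Z(G)\not\cong(\Z/2\Z)^2,(\Z/2\Z)^4$, and (ii), which only asserts the weaker bound $k>r-1$, follows from the same inequality $k>r\ge r-1$.

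The only point I expect needs care is the reduction from $\rho$ to $\rho|_{Z(G)}$ for a reducible sum. Since the $\rho_i$ are ordinary, this is the elementary normality argument above, and it requires nothing from Lemma \ref{faithful projective representation}. The exceptional centres are presumably singled out only because they matter when the summands are allowed to be genuinely projective, via Theorem \ref{elementary abelian p-group}. Here all summands are ordinary, so the stronger uniform bound suffices.
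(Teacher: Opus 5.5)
Your proof is correct, and its skeleton matches the paper's. Both pass to $Z(G)$, where, as you note, only the trivial direction ``faithful on $G$ $\Rightarrow$ faithful on $Z(G)$'' is needed, so the irreducibility hypothesis in Lemma~\ref{restriction} plays no role. Both then write $\rho_i|_{Z(G)}=\chi_i I$ by Schur's lemma and bound $k$ using an abelian result.

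The difference is the last step. The paper bounds $k$ below by $\tau(Z(G))$ and quotes $\tau(Z(G))=r+1$ from Theorem~\ref{elementary abelian p-group}. For centres that are not elementary abelian, that value actually comes from Theorem~\ref{projective embedding degree of finite abelian groups}. Because $\tau$ is a minimum over \emph{all} projective representations of $Z(G)$, including those with non-trivial cocycle, it drops to $r$ for $(\Z/2\Z)^2$ and $(\Z/2\Z)^4$. This is exactly where the exceptional case (ii) comes from.

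You instead use the fact that $\oplus_i\chi_i$ is an \emph{ordinary} sum of linear characters and apply Lemma~\ref{1dim direct sum in abelian group} directly. That lemma holds for every non-trivial finite abelian $p$-group, including $p=2$, with no exceptions. It is the sharper and more economical tool: it covers all centres in one stroke and gives $k>r$ uniformly. Part (ii) is then a strictly weaker consequence, so the case distinction in the statement is unnecessary.
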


\begin{proof}
(i) Let $\rho = \oplus_{i=1}^k\rho_i$, where $k \leq r$. Suppose $Z(G) \not \cong (\Z/2\Z)^2, (\Z/2\Z)^4$. Let $\chi_i \in \Hom(Z(G), \mathbb C^\times)$ such that $\rho_i|_{Z(G)}= \chi_i I_{n_i\times n_i}$. Then, $\rho|_{Z(G)}= \oplus_{i=1}^k\chi_i I_{n_i\times n_i}$.
Hence, if $\rho$ is faithful on $G$, then by Lemma \ref{restriction}$(2)$, $\oplus_{i=1}^k\chi_i$ is a faithful projective representation of $Z(G)$. Since $\tau(Z(G)) = r+1$ (by Theorem \ref{elementary abelian p-group}), we must have $k > r$. 

(ii) By similar arguments, it can be shown that if $Z(G) \cong (\Z/2\Z)^2, (\Z/2\Z)^4$, then $k >r-1$. This completes the proof. 

\end{proof}
We now state the next result from \cite[Theorem 3.1, Theorem 3.3]{kaur}.
\begin{theorem} \label{embedding degree of a p group with cyclic centre}
Let $G$ be a non-abelian $p$-group with cyclic centre. If $cd(G) =\{1,p^a\}$ or $\{1,p,p^a\}$ for $a \geq 1$, then $\delta(G)= \delta_{irr}(G)= p^a$. 
\end{theorem}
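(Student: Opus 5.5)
The statement concerns a non-abelian $p$-group $G$ with cyclic centre and $cd(G)\subseteq\{1,p,p^a\}$. The plan is to establish the two inequalities $\delta_{irr}(G)\le p^a$ and $\delta(G)\ge p^a$, and then combine them using the trivial relation $\delta(G)\le\delta_{irr}(G)$.

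The first step reduces faithfulness to the centre. An ordinary representation $\psi$ of the $p$-group $G$ is faithful if and only if $\psi|_{Z(G)}$ is faithful, because every nontrivial normal subgroup meets $Z(G)$ nontrivially. Since $Z(G)$ is cyclic, it has a unique subgroup $\Omega$ of order $p$. Faithfulness on $Z(G)$ is therefore equivalent to $\Omega\not\subseteq\ker\psi$. For an irreducible $\psi$, Schur's lemma gives $\psi|_{Z(G)}=\lambda I$ with $\lambda$ a linear character, and this restriction is faithful exactly when $\lambda$ is faithful on $Z(G)$.

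Next comes the lower bound $\delta(G)\ge p^a$. Let $\psi=\oplus\psi_i$ be faithful with each $\psi_i$ irreducible. Some $\psi_i$ must fail to contain $\Omega$ in its kernel, and since $\Omega$ lies in the kernel of every irreducible constituent otherwise, that $\psi_i$ is faithful irreducible. It then suffices to show that every faithful irreducible character has degree $p^a$. The degrees $1$ and $p$ must be excluded, and this is the main obstacle.

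For the case $a=1$ there is nothing to do beyond noting that a linear character kills $G'\neq 1$, and $G'$ is normal so meets $Z(G)$, hence contains $\Omega$. This excludes degree $1$ in all cases.

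When $cd(G)=\{1,p,p^a\}$ with $a>1$, a faithful $\chi$ of degree $p$ must be ruled out. My first attempt would be a counting and Clifford argument. For faithful $\chi$, $\chi(1)^2=|G:Z(G)|$ when $\chi$ vanishes off the centre. In general, a faithful $\chi$ with $\chi(1)=p$ gives $|G:Z(G)|$ bounded in terms of $p$, since $\chi(1)^2\ge|G:Z(\chi)|$ with $Z(\chi)=Z(G)$ by faithfulness. This forces $|G:Z(G)|\le p^2$. Then $G/Z(G)\cong(\Z/p\Z)^2$, which makes all irreducible degrees at most $\sqrt{|G:Z(G)|}=p$, contradicting $p^a\in cd(G)$ with $a>1$. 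This inequality $\chi(1)^2\le|G:Z(\chi)|$ is the standard one from Isaacs, and it settles the obstacle.

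Finally, the upper bound $\delta_{irr}(G)\le p^a$ requires a faithful irreducible character of degree $p^a$. Take a faithful linear $\lambda$ of the cyclic $Z(G)$ and any $\chi\in\irr(G\mid\lambda)$; such a $\chi$ is faithful by the first step. By the previous step, $\chi(1)\notin\{1,p\}$ unless $a=1$, so $\chi(1)=p^a$. Hence $\delta_{irr}(G)\le p^a\le\delta(G)\le\delta_{irr}(G)$, and equality holds throughout.
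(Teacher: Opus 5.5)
The paper does not prove this statement; it quotes it from \cite{kaur}. Most of your reductions are correct:
\begin{itemize}
\item faithfulness is detected on the unique subgroup $\Omega$ of order $p$ in $Z(G)$;
\item a faithful representation has a faithful irreducible constituent;
\item linear characters are never faithful;
\item any $\chi\in\irr(G\mid\lambda)$ with $\lambda$ faithful on $Z(G)$ is faithful.
\end{itemize}
This settles the case $cd(G)=\{1,p^a\}$. Everything else hinges on showing that no faithful $\chi\in\irr(G)$ has degree $p$ when $cd(G)=\{1,p,p^a\}$ with $a\ge 2$, and that step is wrong.

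The inequality in Isaacs (Corollary 2.30) is $\chi(1)^2\le|G:Z(\chi)|$. It bounds $\chi(1)$ from above, so for faithful $\chi$ of degree $p$ it only gives $|G:Z(G)|\ge p^2$. Your conclusion $|G:Z(G)|\le p^2$ needs the reverse inequality, which you also wrote down and which is false. For $p$ odd, $C_p\wr C_p$ is a counterexample. It has cyclic centre of order $p$ and a faithful irreducible character of degree $p$: induce $(\mu,1,\dots,1)$ from the base group, with $\mu\neq 1$. Yet $|G:Z(G)|=p^p>p^2$. Your upper bound $\delta_{irr}(G)\le p^a$ also uses this exclusion to conclude $\chi(1)=p^a$. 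So both halves of your argument are unsupported in the $\{1,p,p^a\}$ case.

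The missing idea is to use the structure of a degree-$p$ character rather than a counting bound. Since $p$-groups are M-groups, a faithful $\chi$ with $\chi(1)=p$ equals $\lambda^G$ for a linear character $\lambda$ of a subgroup $H$ of index $p$, and $H$ is normal. Then $H'\le\ker\lambda$ and $H'\trianglelefteq G$, so $H'\le\mathrm{core}_G(\ker\lambda)=\ker\chi=1$. Thus $H$ is an abelian normal subgroup of index $p$. Ito's theorem then forces $\psi(1)\mid p$ for every $\psi\in\irr(G)$, contradicting $p^a\in cd(G)$ with $a\ge 2$. With this in place of your counting step, the rest of your proof goes through unchanged.
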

Recall that $A.G$ is a covering of $G$ if $\exists$ a subgroup $A \subseteq {(A.G)}' \cap Z(A.G) \cap M(G)$ such that $(A.G)/A \cong G$. 

\begin{theorem} 
\label{projective embedding degrees}
Let $B_G= \{A.G \mid A \cong Z(A.G) \text{ and } \delta_{irr}(A.G) \text{ exists} \}$. Then $\tau_{irr}(G)$ exists if and only if $B_G \neq \emptyset$, in which case $\tau_{irr}(G) = \min\{\delta_{irr}(A.G) \mid A.G \in B_G\}$. 
Furthermore, $\tau(G) \geq \min\{\delta(A.G) \} $.
\end{theorem}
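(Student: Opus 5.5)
The plan is to pass between projective representations of $G$ and ordinary representations of coverings $A.G$, following the pattern of Theorem \ref{capable groups} and Theorem \ref{bijective correspondence}. Fix $\alpha\in Z^2(G,\C^\times)$. A covering is the quotient of a representation group $G^*$ by a subgroup of $M(G)$. So, arguing as in Theorem \ref{bijective correspondence}, each $\alpha$-representation $\rho$ of $G$ lifts to an ordinary representation $\Gamma$ of $G^*$ on which $M(G)$ acts by the scalar character $\chi$ with $\operatorname{tra}(\chi)=[\alpha]$.

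\textbf{The covering attached to $\rho$.} Let $K=\ker\chi$ and put $A.G:=G^*/K$, which is a covering with $A=M(G)/K$. Then $\Gamma$ factors through $A.G$, and $\chi$ induces a faithful character of $A$. I would check the kernel computation: an element $x\in A.G$ satisfies $\Gamma(x)=I$ exactly when its image $g\in G$ has $\rho(g)$ scalar and the scalar is compensated inside $A$. More precisely, $\Gamma(x)$ is scalar if and only if $g\in\ker\rho$. This gives $\{x\in A.G \mid \Gamma(x)\ \text{scalar}\}=$ the preimage of $\ker\rho$, and $\ker\Gamma\cap A=1$ since $\chi$ is faithful on $A$.

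\textbf{Irreducible case.} If $\rho$ is faithful and irreducible, then $\Gamma$ is irreducible and its kernel meets $A$ trivially. Also $\Gamma(x)$ is scalar only when $x\in A$, so $\ker\Gamma\subseteq A$ and hence $\Gamma$ is faithful. By Lemma \ref{one dim direct sum in non-abelian group}(2) (that is, \cite[Lemma 2.27]{isaacs}), $Z(A.G)=\{x \mid \Gamma(x)\ \text{scalar}\}=A$. So $A.G\in B_G$, and $\delta_{irr}(A.G)\le \deg\Gamma=\deg\rho$. Conversely, suppose $A.G\in B_G$ and $\Gamma$ is a faithful irreducible representation of degree $\delta_{irr}(A.G)$. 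Composing with a section $G\to A.G$ gives an irreducible projective representation $\rho$ of $G$ of the same degree. Its projective kernel is the image of $Z(A.G)=A$, which is trivial in $G$, so $\rho$ is faithful. Together these give both the existence criterion and the formula $\tau_{irr}(G)=\min\{\delta_{irr}(A.G)\mid A.G\in B_G\}$.

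\textbf{Inequality for $\tau(G)$.} Take a faithful $\alpha$-representation $\rho$ of degree $\tau(G)$, which need not be irreducible. The same construction, applied to all summands simultaneously (they share the same $\alpha$, hence the same $\chi$), gives an ordinary representation $\Gamma$ of $A.G=G^*/\ker\chi$. As before, $\ker\Gamma$ lies in the preimage of $\ker\rho=1$, so $\ker\Gamma\subseteq A$, and $\ker\Gamma\cap A=1$ by faithfulness of $\chi$ on $A$. Thus $\Gamma$ is faithful and $\delta(A.G)\le\tau(G)$.

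\textbf{Main obstacle.} The step needing most care is the bookkeeping that $G^*/\ker\chi$ really is a covering in the paper's sense. One needs $A\subseteq (A.G)'\cap Z(A.G)$, which is inherited from $M(G)\subseteq (G^*)'\cap Z(G^*)$, and one needs that the cocycle attached to the section matches $\alpha$ up to cohomology, which follows from Lemma \ref{projectively equivalent} and Lemma \ref{projectively equivalent to an ordinary representation}.
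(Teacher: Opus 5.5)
Your proposal is correct and follows essentially the same route as the paper: lift the faithful projective representation to $G^*$, then pass to the quotient by the kernel of the lift. That quotient is a covering on which the lift becomes a faithful ordinary representation, with $Z(A.G)=A$ in the irreducible case. Your quotient $G^*/\ker\chi$ coincides with the paper's $G^*/K_{\rho^*}$ when $\rho$ is faithful. You also verify the covering axioms and the converse direction (composing with a section) directly, where the paper instead appeals to cited results.
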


\begin{proof}
Let $\rho$ be a faithful projective representation of $G$ and $\rho^*$ be a lift of $\rho$ in $G^*$. Suppose $H= G^*/K_{\rho^*}$. It is easy to see that $\rho^*$ induces a faithful ordinary representation of $H$.
Suppose $\tau_{irr}(G)$ exists. By Theorem \ref{projective embedding degrees}, $B_G \neq \emptyset$. Let $H= A.G \in B_G$. Since $\delta_{irr}(H)$ exists, $Z(H)$ is cyclic due to \cite[Theorem 2.32(a)]{isaacs}. Since $A \subseteq \mathrm{H}^2(G, \C^{\times})$ and $A = Z(H)$, we have $A \cong \Z/p\Z$. Hence, $|H|= p^{k+1}$.

Conversely, let $H$ be a covering of $G$ of order $p^{k+1}$ such that $G \cong H/Z(H)$. Since $Z(H)$ is cyclic, by \cite[Theorem 2.32(b)]{isaacs}, $\delta_{irr}(H)$ exists. Hence, $H \in B_G$. By Theorem \ref{projective embedding degrees}, $\tau_{irr}(G)$ exists.
From the above argument and Theorem \ref{projective embedding degrees}, it follows that $\tau_{irr}(G) = \min\{\delta_{irr}(H) \mid |H|= p^{k+1} \text{ s.t. } Z(H) \cong \Z/p\Z \}$. 

Now, let $\eta$ be a faithful projective representation of $G$ of degree $\tau(G)$ and $\eta^*$ be a lift of $\eta$ in $G^*$. Suppose $H= G^*/K_{\eta^*}$. It is easy to see that $\eta^*$ induces a faithful ordinary representation of $H$. By \cite[Proposition 2.15]{gonzalo}, $H$ is a covering of $G$. Hence, $\tau(G) \geq \min\{\delta(A.G) \}$. 

\end{proof}

\begin{corollary} \label{Elementary abelian Schur multiplier}
(i) Let $G$ be a non-abelian group of order $p^k$ and $\mathrm{H}^2(G, \C^{\times})$ be an elementary abelian $p$-group. Then, $\tau_{irr}(G)$ exists if and only if there exists a covering $H$ of order $p^{k+1}$ such that $G \cong H/Z(H)$. Furthermore, $\tau_{irr}(G) = \min\{\delta_{irr}(H) \mid |H|=p^{k+1} \text{ s.t. } Z(H) \cong \Z/p\Z \}$.\\
(ii) Let $G$ be a non-abelian $p$-group such that $cd(G^*)= \{1, p^n\}$. If $\tau_{irr}(G)$ exists, then $\tau(G)= \tau_{irr}(G)= p^n$. 
\end{corollary}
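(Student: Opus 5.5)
The plan is to derive both parts directly from Theorem \ref{projective embedding degrees} and Theorem \ref{bijective correspondence}, using the order constraints that come from $G$ being a $p$-group.

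For (i), I apply Theorem \ref{projective embedding degrees}: $\tau_{irr}(G)$ exists if and only if $B_G\neq\emptyset$.
\begin{enumerate}
\item Take $H=A.G\in B_G$. Since $\delta_{irr}(H)$ exists, $Z(H)=A$ is cyclic by \cite[Theorem 2.32(a)]{isaacs}.
\item The covering condition gives $A\subseteq M(G)$, which is elementary abelian. A cyclic subgroup of an elementary abelian $p$-group has order at most $p$.
\item $A$ cannot be trivial. If it were, $H=G$ would have trivial centre, which is impossible for a nontrivial $p$-group. Hence $A\cong \Z/p\Z$ and $|H|=p^{k+1}$, with $G\cong H/Z(H)$.
\item Conversely, let $H$ be a covering $A.G$ of order $p^{k+1}$ with $G\cong H/Z(H)$. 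Then $|Z(H)|=|H|/|G|=p$ and $A\subseteq Z(H)$ has order $p$, so $A=Z(H)$.
\item Since $Z(H)$ is cyclic, \cite[Theorem 2.32(b)]{isaacs} shows $\delta_{irr}(H)$ exists, so $H\in B_G$.
\item The formula $\tau_{irr}(G)=\min\{\delta_{irr}(H)\}$ is then the one in Theorem \ref{projective embedding degrees}, restricted to exactly these $H$.
\end{enumerate}
The only delicate point is confirming that the covering subgroup $A$ coincides with $Z(H)$, and this follows from the order count in step 4.

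For (ii), I use Theorem \ref{bijective correspondence}. Every irreducible $\alpha$-representation of $G$, for any $[\alpha]$, lifts to an irreducible ordinary representation of $G^*$ of the same degree. Since $cd(G^*)=\{1,p^n\}$, every irreducible projective representation of $G$ has degree $1$ or $p^n$.
\begin{enumerate}
\item A faithful irreducible projective representation cannot have degree $1$, because $G\neq 1$ and every $1$-dimensional representation has image consisting of scalars. Since $\tau_{irr}(G)$ exists by hypothesis, $\tau_{irr}(G)=p^n$, and so $\tau(G)\le p^n$.
\item For the reverse inequality, suppose $\rho$ is a faithful $\alpha$-representation with $\deg\rho<p^n$. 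Write $\rho=\oplus\rho_i$ with $\rho_i\in\mathrm{Irr}^\alpha(G)$; each $\rho_i$ must be $1$-dimensional.
\item A $1$-dimensional $\alpha$-representation $\lambda$ satisfies $\alpha(g,h)=\lambda(g)\lambda(h)\lambda(gh)^{-1}$, so $\alpha$ is a coboundary. By the convention following Lemma \ref{projectively equivalent} we may take $\alpha=1$.
\item Then $\rho$ is a direct sum of $1$-dimensional ordinary representations of the non-abelian group $G$. By Lemma \ref{one dim direct sum in non-abelian group}(1) it is not faithful, a contradiction. Hence $\tau(G)=p^n$.
\end{enumerate}
I do not expect a real obstacle here; the only step needing care is the observation in step 3 that a $1$-dimensional $\alpha$-representation forces $[\alpha]$ to be trivial.
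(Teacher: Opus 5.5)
Your proposal is correct. Part (i) follows the paper's argument. You also make explicit two points the paper glosses over: that $A\neq 1$, so the cyclic subgroup $A$ of the elementary abelian group $M(G)$ has order exactly $p$; and, in the converse, that the covering kernel $A$ equals $Z(H)$ by the order count.

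In part (ii) you reach $\tau_{irr}(G)=p^n$ by a more direct route. The paper uses Theorem \ref{bijective correspondence} only to deduce $cd(A.G)=\{1,p^n\}$ for every covering. It then takes $H\in B_G$, which has cyclic centre, applies the external Theorem \ref{embedding degree of a p group with cyclic centre} to get $\delta_{irr}(H)=p^n$, and concludes via the minimum formula of Theorem \ref{projective embedding degrees}. You instead apply Theorem \ref{bijective correspondence} to $G$ itself. Every irreducible projective representation of $G$ has degree $1$ or $p^n$, and a degree-one representation has trivial image in $\mathrm{PGL}_1(\mathbb{C})$, so any faithful irreducible one has degree exactly $p^n$. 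This avoids $B_G$ and the cited result on $\delta_{irr}$ of groups with cyclic centre altogether. The paper's version, by contrast, keeps the argument within the covering-group framework used throughout Section \ref{groupsoforderp5}.

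For $\tau(G)=p^n$ both proofs rest on Lemma \ref{one dim direct sum in non-abelian group}(1). You spell out the step the paper leaves implicit: a direct sum of one-dimensional $\alpha$-representations forces $[\alpha]=1$, so one can pass to an ordinary representation. Your argument also shows $\tau(G)\ge p^n$ without assuming $\tau_{irr}(G)$ exists; that hypothesis is needed only for the upper bound.
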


\begin{proof}
$(i)$ Suppose $\tau_{irr}(G)$ exists. By Theorem \ref{projective embedding degrees}, $B_G \neq \emptyset$. Let $H= A.G \in B_G$. Since $\delta_{irr}(H)$ exists, $Z(H)$ is cyclic due to \cite[Theorem 2.32(a)]{isaacs}. Since $A \subseteq \mathrm{H}^2(G, \C^{\times})$ and $A = Z(H)$, we have $A \cong \Z/p\Z$. Hence, $|H|= p^{k+1}$.

Conversely, let $H$ be a covering of $G$ of order $p^{k+1}$ such that $G \cong H/Z(H)$. Since $Z(H)$ is cyclic, by \cite[Theorem 2.32(b)]{isaacs}, $\delta_{irr}(H)$ exists. Hence, $H \in B_G$. By Theorem \ref{projective embedding degrees}, $\tau_{irr}(G)$ exists and also $\tau_{irr}(G) = \min\{\delta_{irr}(H) \mid |H|= p^{k+1} \text{ s.t. } Z(H) \cong \Z/p\Z \}$ by the above arguments. 

$(ii)$ Let $A.G$ be any covering of $G$. If $p^m \in cd(A.G)$, let $\Gamma \in \irr(A.G)$ be of degree $p^m$. Then, there is a projective representation $\rho$ of $G$ of degree $p^m$ which lifts to $\Gamma$. Since $cd(G^*)=\{1, p^n\}$, by Theorem \ref{bijective correspondence}, $cd(A.G)=\{1, p^n\}$. If $\tau_{irr}(G)$ exists, then by Theorem \ref{projective embedding degrees}, $B_G \neq \emptyset$. Let $H \in B_G$. Since $\delta_{irr}(H)$ exists, $Z(H)$ is cyclic. By Theorem \ref{embedding degree of a p group with cyclic centre}, $\delta_{irr}(H)= p^n$. Hence, $\tau_{irr}(G) = p^n$ due to Theorem \ref{projective embedding degrees}. By Lemma \ref{one dim direct sum in non-abelian group}$(1)$, $\tau(G)= p^n$. 
\end{proof}




We suggest that readers keep all the cited papers listed below handy, as they will be used without further reference.
\begin{enumerate}
\item The classification of groups $G$ of order $p^3$, $p^4$ and $p^5$ for an odd prime $p$ is given in \cite{james}. 

\item For the capability and Schur multiplier of groups of order $p^5$, $p\geq 5$, see \cite[Table 1 and Table 2]{hatuip5}.

\item The Schur multiplier of groups of order $p^4$ are given in \cite{Ellis} and \cite{OK} for odd primes $p$, and we use GAP (\cite{GAP4}) for $p=2$.

\item For non-abelian groups $G$ of order $p^4$ and $p^5$ with elementary abelian Schur multiplier, to determine all suitable groups $H$ satisfying the hypotheses of Corollary~\ref{Elementary abelian Schur multiplier}(i), we refer to \cite[Table~4.1]{james} for odd primes $p$, and to \cite{hall2n} for groups of order $2^4$.

\item We also require information on $\delta(G)$ and $\delta_{irr}(G)$ for the above-mentioned groups $G$, which are discussed in \cite{kaur}. If $G$ is a group of order $p^5$ or $p^6$, we refer to \cite[Table~4.1]{james} to determine $\mathrm{cd}(G)$. 
\end{enumerate}
\begin{theorem} \label{Groups of order $p^3$}
For non-abelian groups $G$ of order $p^3$,  $\tau(G)$ and $\tau_{irr}(G)$ are given below.
(i) $\tau_{irr}(\Phi_2(21))$ does not exist and $\tau(\Phi_2(21)) =p+1$. \\
(ii) $\tau_{irr}(\Phi_2(1^3)) = \tau(\Phi_2(1^3)) =p$. \\
(iii) $\tau_{irr}(Q_8)$ does not exist and  $\tau(Q_8) =3$.  \\
(iv) $\tau_{irr}(D_8) = \tau(D_8) =2$. 

\end{theorem}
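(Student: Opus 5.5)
We treat the four groups separately, using the tools of Sections 2–4. For every non-abelian group $G$ of order $p^3$ we have $|Z(G)|=p$ and $Z(G)=G'$. Also $\tau(G)\ge p$ by Remark \ref{some results on p-groups}(i), and $\tau(G)=p$ if and only if $\tau_{irr}(G)=p$.

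\emph{(i) $\Phi_2(21)$, $p$ odd.} This is the metacyclic group of exponent $p^2$. It is an extra-special group of exponent $p^2$, so it is non-capable by Lemma \ref{capability of extra special $p$-group}. Corollary \ref{non-capable groups} then shows that $\tau_{irr}$ does not exist. Its Schur multiplier is trivial, since a metacyclic group with cyclic-by-cyclic split presentation of this type has $M(G)=1$. Its centre has order $p$. Lemma \ref{group with cyclic centre of order p} therefore gives $\tau(G)=\delta(G)+1$. Finally $\delta(G)=p$ by Theorem \ref{embedding degree of a p group with cyclic centre}, since $cd(G)=\{1,p\}$. This yields $\tau=p+1$. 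The one point to check is $M(\Phi_2(21))=1$, which we cite from \cite{james}/\cite{hatuip5}.

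\emph{(ii) $\Phi_2(1^3)$, $p$ odd.} This is the Heisenberg group $H_3(\Z/p\Z)$. Theorem \ref{Heisenberg group}(1) with $k=1$ gives $\tau_{irr}=\tau=p$ directly.

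\emph{(iii) $Q_8$.} $Q_8$ is non-capable by Lemma \ref{capability of extra special $p$-group}, so $\tau_{irr}(Q_8)$ does not exist by Corollary \ref{non-capable groups}. We have $M(Q_8)=1$ and $|Z(Q_8)|=2$, so Lemma \ref{group with cyclic centre of order p} gives $\tau(Q_8)=\delta(Q_8)+1=3$, using $\delta(Q_8)=2$. Alternatively, this is Example 5.4 with $n=2$.

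\emph{(iv) $D_8$.} This is the dihedral case of Example 5.1: $D_8$ embeds in $\PGL_2(\C)$, so $\tau(D_8)=\tau_{irr}(D_8)=2$.

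The main obstacle is the hypothesis $M(G)=1$ in (i). If it were in doubt, we would instead argue as in Theorem \ref{extraspecial $p$-group}: any faithful projective representation restricted to the centre forces at least two irreducible constituents. For the lower bound $\tau>p$, we would use Remark \ref{some results on p-groups}(iii) together with the nonexistence of $\tau_{irr}$.
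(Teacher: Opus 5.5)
Your proof is correct; (ii) and (iv) are exactly the paper's arguments (Theorem~\ref{Heisenberg group}(1) and Example~5.1). For (i) you take a different route. You apply Lemma~\ref{group with cyclic centre of order p}, which needs the extra input $M(\Phi_2(21))=1$. This is true for odd $p$, e.g.\ from the standard formula for the multiplier of the split metacyclic group $\langle a,b\mid a^{p^2}=b^p=1,\ b^{-1}ab=a^{1+p}\rangle$. However, \cite{hatuip5} concerns groups of order $p^5$ and is not a source for it. Once you have $M(G)=1$, that lemma gives both the non-existence of $\tau_{irr}$ and $\tau=\delta+1=p+1$, so your appeal to non-capability becomes redundant.

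The paper never uses the Schur multiplier here. Non-capability of the extra-special group rules out $\tau_{irr}$ (Corollary~\ref{non-capable groups}). Remark~\ref{some results on p-groups}(iii) then gives $\tau>p$, and Lemma~\ref{relation} with $\delta=p$ gives $\tau\le p+1$ --- precisely the fallback in your last sentence. Likewise, for (iii) your main argument goes through the same lemma ($M(Q_8)=1$, $|Z(Q_8)|=2$, $\delta(Q_8)=2$). The paper just cites Example~5.4: $Q_8$ is not a finite subgroup of $\PGL_2(\C)$, and $\tau\le\delta+1=3$.

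The paper's route is shorter and uses only facts already established in the text. Yours gives a more structural reason: every projective representation is equivalent to an ordinary one, and in a candidate of degree $\delta(G)$ the constituents are non-faithful, hence trivial on the order-$p$ centre. The cost is a cohomological fact that must be sourced separately.
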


\begin{proof} 
$(i)$ Let $G= \Phi_2(21)$. Since $G$ is extra-special, by Lemma \ref{capability of extra special $p$-group}, $G$ is not capable. So $\tau_{irr}(G)$ does not exist due to Corollary \ref{non-capable groups}.  
By \cite[Theorem 3.1(2)]{kaur}, $\delta(G)= p$. Therefore, Lemma \ref{relation} and Remark \ref{some results on p-groups}$(iii)$ yield that $\tau(G)= p+1$. \\
$(ii)$ For $G= \Phi_2(1^3)$, $\tau(G)= \tau_{irr}(G)= p$, by Theorem \ref{Heisenberg group}. \\ 
$(iii)$ The result follows from Example 5.4. \\
$(iv)$ The result follows from Example 5.1. \\
\end{proof}

\begin{theorem} \label{$p^4$}
For non-abelian groups $G$ of order $p^4$  ($p\geq 3)$, $\tau(G)$ and $\tau_{irr}(G)$ are given below. \\
(i) $\tau_{irr}(G)= p^2$ if $G$ is $\Phi_{2}(1^{4})$ or $\Phi_2(22)$ and $\tau_{irr}(\Phi_{3}(1^{4}))= \tau(\Phi_{3}(1^{4})) =p$. \\
(ii) $\tau_{irr}(G)$ exists if and only if $G$ is one of the groups: $\Phi_2(1^4)$, $\Phi_2(22)$ or $\Phi_3(1^4)$. \\
(iii) $\tau(G) = p+1$, if $Z(G)$ is cyclic and $G \ncong \Phi_3(1^4)$. \\
(iv) $\tau(G) = p+2$, if $Z(G) \cong \Z/p\Z \times \Z/p\Z$.

\end{theorem}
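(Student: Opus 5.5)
We treat the groups of order $p^4$ ($p$ odd) through James' classification: they fall into the isoclinism families $\Phi_2$ and $\Phi_3$, and we use the known Schur multipliers and capability data throughout. The basic dichotomy comes first. By Corollary \ref{non-capable groups}, $\tau_{irr}(G)$ does not exist when $G$ is non-capable.

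\textbf{Step 1: which groups can have $\tau_{irr}(G)$ (part (ii)).} For groups of order $p^4$, the capable non-abelian ones are $\Phi_2(1^4)$, $\Phi_2(22)$, $\Phi_3(1^4)$ and possibly a few others, such as $\Phi_2(211)c$ or $\Phi_2(31)$. For each capable group outside the three listed in (ii), we show directly that no cohomology class avoids the images of all inflations from $G/N$, $N$ a non-trivial central subgroup. We do this via Theorem \ref{cohomological characterization}. Concretely, we check that every $[\alpha]$ has a non-trivial $\alpha$-regular central element, and then apply Lemma \ref{faithful projective representation}(1).

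\textbf{Step 2: the three groups in (ii).} Here $\mathrm{H}^2(G,\C^\times)$ is elementary abelian, so we use Corollary \ref{Elementary abelian Schur multiplier}(i). We look for coverings $H$ of order $p^5$ with $Z(H)\cong\Z/p\Z$ and $H/Z(H)\cong G$, read off from \cite[Table 4.1]{james}.

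For $\Phi_2(1^4)\cong \Phi_2(1^3)\times \Z/p$ and for $\Phi_2(22)$, such $H$ lie in families with $cd(H)=\{1,p^2\}$. This is forced because $H$ has cyclic centre of order $p$ and index $p^4$ over it, giving $\chi(1)^2=|H:Z(H)|=p^4$ for fully ramified characters. Theorem \ref{embedding degree of a p group with cyclic centre} then gives $\delta_{irr}(H)=p^2$, hence $\tau_{irr}(G)=p^2$.

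For $\Phi_3(1^4)$, a covering $H$ in $\Phi_{10}$ or $\Phi_9$ of order $p^5$ has $cd(H)=\{1,p\}$, so $\tau_{irr}=p$. Remark \ref{some results on p-groups}(ii) then gives $\tau=p$, which settles (i).

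\textbf{Step 3: parts (iii) and (iv).} For $Z(G)$ cyclic and $G\ne\Phi_3(1^4)$, the upper bound comes from \cite{kaur}, which gives $\delta(G)=p$ for these groups. Lemma \ref{relation} then yields $\tau(G)\le p+1$. For the lower bound, $\tau(G)=p$ would force $\tau_{irr}(G)=p$ by Remark \ref{some results on p-groups}(ii). That is excluded by Steps 1–2: either $\tau_{irr}$ does not exist, or it equals $p^2$ for $\Phi_2(22)$ and $\Phi_2(1^4)$. Note that $\Phi_2(1^4)$ has non-cyclic centre, so it does not fall under (iii). Hence $\tau(G)=p+1$.

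For $Z(G)\cong(\Z/p)^2$, we first get the upper bound $p+2$. Here $G=\Phi_2(\cdot)$ with $G'$ of order $p$. We take a faithful degree-$p$ irreducible representation on a complement piece together with $1$-dimensional characters to separate $Z(G)$. For example, take $\rho_1$ of degree $p$ nontrivial on $G'$ and two linear characters, so that the three central characters give a faithful projective representation of $Z(G)$. By Lemma \ref{restriction}(2) (and Theorem \ref{direct_sum}) this sum is faithful, giving $\tau\le p+2$.

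For the lower bound, we split a faithful $\rho=\oplus\rho_i$ of degree $\le p+1$ by cocycle class. If $\alpha$ is trivial, Lemma \ref{direct sum of k irreducible ordinary representations} needs at least $3$ summands with at least one of degree $p$, giving degree $\ge p+2$. If $\alpha$ is non-trivial, all summands have degree $\ge p$, so $\rho$ is irreducible of degree $p$ (or $p+1$ is impossible since degrees are $p$-powers). This would make $\tau_{irr}=p$, contradicting Step 2, since $\Phi_2(1^4)$ has $\tau_{irr}=p^2$.

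\textbf{Main obstacle.} The main obstacle is Step 1 together with the nontrivial-cocycle case of (iv). Both require explicit cocycle and covering-group data for each isoclinism class. I would handle them by checking, family by family, that each covering of order $p^5$ with cyclic centre and the right quotient either does not exist or has the stated $cd$.
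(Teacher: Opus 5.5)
Your overall plan matches the paper's proof. Part (i) goes through Corollary~\ref{Elementary abelian Schur multiplier}(i) and Theorem~\ref{embedding degree of a p group with cyclic centre}, and (ii) through capability. Part (iii) uses $\delta(G)=p$, Lemma~\ref{relation} and Remark~\ref{some results on p-groups}(iii), and (iv) uses Lemma~\ref{direct sum of k irreducible ordinary representations} with Remark~\ref{some results on p-groups}(ii). Your explicit $\rho_1\oplus\lambda_1\oplus 1_G$ for the upper bound in (iv) is fine.

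\textbf{The gap: the lower bound $\tau_{irr}(G)\ge p^2$ in Step 2.} You claim every covering $H\in B_G$ of $G=\Phi_2(1^4)$ or $\Phi_2(22)$ has $cd(H)=\{1,p^2\}$. This cannot hold: $H/Z(H)\cong G$ is non-abelian, so $H$ has inflated characters of degree $p$. In fact $cd(H)=\{1,p,p^2\}$ for $H\in\Phi_7$ and $H\in\Phi_8$.

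More seriously, your justification does not work. You argue that $Z(H)$ cyclic of order $p$ with $|H:Z(H)|=p^4$ makes faithful irreducible characters fully ramified of degree $p^2$. Nothing forces a faithful irreducible character to vanish off $Z(H)$. The $\Phi_9$ coverings of $\Phi_3(1^4)$ that you yourself use are counterexamples. They have $Z(H)\cong\Z/p\Z$ and $|H:Z(H)|=p^4$, yet their faithful irreducible characters have degree $p$; that is exactly why $\tau_{irr}(\Phi_3(1^4))=p$. So your argument cannot separate the two situations; only the actual character-degree data does.

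\textbf{How to close it.} Determine $B_G$ exactly from \cite[Table 4.1]{james}: at order $p^5$ it is $\Phi_7$ for $\Phi_2(1^4)$, $\Phi_8$ for $\Phi_2(22)$, and $\Phi_9\cup\Phi_{10}$ for $\Phi_3(1^4)$. Then use $cd(H)=\{1,p,p^2\}$ for $\Phi_7$ and $\Phi_8$. Theorem~\ref{embedding degree of a p group with cyclic centre} gives $\delta_{irr}(H)=p^2$ for every $H\in B_G$, hence $\tau_{irr}(G)=p^2$.

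\textbf{Consequences elsewhere.} The non-trivial-cocycle case of (iv) excludes a faithful irreducible projective representation of degree $p$ by appealing to Step 2, so it inherits this gap. It must also cover $\Phi_2(22)$, not only $\Phi_2(1^4)$. Your claim that $\Phi_{10}$ coverings have $cd=\{1,p\}$ is also false, but harmless: one $\Phi_9$ covering plus $\tau(G)\ge p$ settles $\Phi_3(1^4)$.

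\textbf{Step 1 is only a hedge.} The input you need is the classification in \cite{hasanat}: $\Phi_2(1^4)$, $\Phi_2(22)$ and $\Phi_3(1^4)$ are the only capable non-abelian groups of order $p^4$. In particular $\Phi_2(31)$ and $\Phi_2(211)c$ are not capable. With that, your unspecified ``direct check'' is unnecessary, and (iii) follows from Corollary~\ref{non-capable groups}.
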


\begin{proof}
$(i)$ Let $G_1 = \Phi_{2}(1^{4})$. Then, $B_{G_1}= \{H \mid |H|= p^5, H \in \Phi_{7} \}$. If $H \in B_{G_1}$, then $\delta_{irr}(H)= p^2$ in view of Theorem \ref{embedding degree of a p group with cyclic centre}. Hence, $\tau_{irr}(G_1)= p^2$ due to Corollary \ref{Elementary abelian Schur multiplier}$(i)$.

Let $G_2 = \Phi_2(22)$ and $G_3 = \Phi_{3}(1^{4})$. Then, $B_{G_2}= \{H \mid |H|= p^5, H \in \Phi_{8} \}$ and $B_{G_3}= \{H \mid |H|= p^5, H \in \Phi_{9} \text{ or } \Phi_{10}\}$. Using similar arguments, we get $\tau_{irr}(G_2)= p^2$ and $\tau_{irr}(G_3)= p$. By Remark \ref{some results on p-groups}$(ii)$, $\tau(G_3)= p$. 



$(ii)$ If $G$ is one of the groups $\Phi_2(1^4)$, $\Phi_2(22)$ or $\Phi_3(1^4)$, then $\tau_{irr}(G)$ exists due to $(i)$. By \cite{hasanat}, these are the only three non-abelian groups that are capable. Hence, by Corollary \ref{non-capable groups}, $\tau_{irr}(G)$ does not exist for all other groups $G$.

$(iii)$ From $(ii)$ and Remark \ref{some results on p-groups}$(iii)$, it follows that $\tau(G)>p$. Since $\delta(G)= p$ if $Z(G)$ is cyclic (by \cite[Theorem 3.1(3)]{kaur}), $\tau(G)= p+1$ due to Lemma \ref{relation}. \\
$(iv)$ By Remark \ref{some results on p-groups}$(ii)$ and Lemma \ref{direct sum of k irreducible ordinary representations}, $\tau(G)>p+1$. Since $\delta(G)= p+1$, by Lemma \ref{relation}, $\tau(G) \leq p+2$. Hence, the result follows.  

\end{proof}
In the following result, we use the notations for the groups of order $2^4$ as given in \cite{ConradGroups16}.
\begin{theorem}
For non-abelian groups $G$ of order $2^4$, $\tau(G)$ and $\tau_{irr}(G)$ are given below: \\
(i) If $G$ is one of the groups $ \mathbb{Z}/8\mathbb{Z} \rtimes_{3} \mathbb{Z}/2\mathbb{Z}$, $\mathbb{Z}/8\mathbb{Z} \rtimes_{5} \mathbb{Z}/2\mathbb{Z}$, $Q_{8} \rtimes \mathbb{Z}/2\mathbb{Z}$, $(\mathbb{Z}/4\mathbb{Z}) \rtimes (\mathbb{Z}/4\mathbb{Z})$ or $Q_{8} \times \mathbb{Z}/2\mathbb{Z}$, then $\tau_{irr}(G)$ does not exist. \\
(ii) $\tau(D_{16})= \tau_{irr}(D_{16})=2$;  $\tau(Q_{16}) =3$ and $\tau_{irr}(Q_{16})$ does not exist. \\
(iii) If $G$ is one of the groups $D_{8} \times \mathbb{Z}/2\mathbb{Z}$ or $(\mathbb{Z}/2\mathbb{Z})^{2} \rtimes \mathbb{Z}/4\mathbb{Z}$, then $\tau_{irr}(G) = \tau(G) = 4$. \\
(iv) If $G$ is one of the groups $(\mathbb{Z}/4\mathbb{Z}) \rtimes (\mathbb{Z}/4\mathbb{Z})$ or $Q_{8} \times \mathbb{Z}/2\mathbb{Z}$, then $\tau(G)= 4$. \\
(v) If $G$ is one of the groups $\mathbb{Z}/8\mathbb{Z} \rtimes_{5} \mathbb{Z}/2\mathbb{Z}$, $\mathbb{Z}/8\mathbb{Z} \rtimes_{3} \mathbb{Z}/2\mathbb{Z}$ or $Q_{8} \rtimes \mathbb{Z}/2\mathbb{Z}$, then $\tau(G)=3$. 
\end{theorem}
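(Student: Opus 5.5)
I will treat the nine non-abelian groups of order $16$ one family at a time, using the general tools from earlier sections: Example 5.1, Example 5.2 and Example 5.4 for degree-$2$ and degree-$3$ statements; Corollary \ref{non-capable groups} together with capability data for the non-existence of $\tau_{irr}$; and Lemma \ref{direct sum of k irreducible ordinary representations} with Lemma \ref{relation} for lower and upper bounds.

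\emph{Parts (ii) and (v).} Part (ii) is immediate. $D_{16}$ is a dihedral group, so Example 5.1 gives $\tau=\tau_{irr}=2$. $Q_{16}$ is generalized quaternion, so Example 5.4 gives $\tau(Q_{16})=3$ and that $\tau_{irr}(Q_{16})$ does not exist.

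For (v), the groups $\Z/8\Z\rtimes_5\Z/2\Z$, $\Z/8\Z\rtimes_3\Z/2\Z$ (semidihedral) and $Q_8\rtimes\Z/2\Z$ (the Pauli group) all have cyclic centre. Each has a faithful irreducible ordinary representation of degree $2$, so $\delta=2$. None of them is isomorphic to $D_{2n}$ or to a subgroup of $\PGL_2(\C)$. Example 5.2 (equivalently, Lemma \ref{relation} plus Example 5.1) then gives $\tau=3$.

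\emph{Part (i).} I show each listed group is non-capable and apply Corollary \ref{non-capable groups}.
\begin{itemize}
\item $Q_8\rtimes\Z/2\Z$ is the central product $Q_8\circ\Z/4\Z$, and $Q_8\times\Z/2\Z$ contains $Q_8$. I would use the known list of capable groups of order $16$ (via Theorem \ref{epicenter} and the epicentre computation, or GAP), rather than Lemma \ref{central product}, since the commutator hypothesis there fails.
\item The remaining groups $\Z/8\Z\rtimes_3\Z/2\Z$, $\Z/8\Z\rtimes_5\Z/2\Z$ and $\Z/4\Z\rtimes\Z/4\Z$ have nontrivial epicentre. Equivalently, for each I exhibit a nontrivial central $N$ with $\mathrm{inf}$ surjective and invoke Theorem \ref{cohomological characterization}.
\end{itemize}

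\emph{Parts (iii) and (iv).} For (iii), $D_8\times\Z/2\Z$ and $(\Z/2\Z)^2\rtimes\Z/4\Z$ are capable. I would use Corollary \ref{Elementary abelian Schur multiplier}(i), since their Schur multipliers are elementary abelian.
\begin{itemize}
\item Find a group $H$ of order $32$ with $Z(H)\cong\Z/2\Z$ and $H/Z(H)\cong G$. The extraspecial groups of order $32$ serve for $D_8\times\Z/2\Z$, and an appropriate group of order $32$ from \cite{hall2n} for the other. This gives $\tau_{irr}=\delta_{irr}(H)=4$.
\item For the matching lower bound $\tau\ge4$: $Z(G)\cong(\Z/2\Z)^2$, so by Lemma \ref{direct sum of k irreducible ordinary representations}(ii) ordinary sums need at least $2$ summands. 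One-dimensional summands fail by Lemma \ref{one dim direct sum in non-abelian group}(1), so ordinary sums have degree at least $3$.
\item Degree $3$ is then excluded by Example 5.1 and the fact that a non-trivial cocycle gives even degree constituents of size $2$ whose sum would be $4$. Concretely, $\tau\neq2$ because $G$ is not in the list of Example 5.1. For $\tau=3$ I check that any $3$-dimensional faithful projective representation would embed $G$ in $\PGL_3(\C)$ with a decomposition $2+1$ or $1+1+1$, and rule these out using the centre restriction via Lemma \ref{restriction}.
\end{itemize}

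For (iv), $\Z/4\Z\rtimes\Z/4\Z$ and $Q_8\times\Z/2\Z$ have centre $(\Z/2\Z)^2$ and $\delta=3$, so $\tau\le4$ by Lemma \ref{relation}. For the lower bound I exclude degree $3$ with the same $2+1$ and $1+1+1$ analysis, using Theorem \ref{direct_sum} with $A=G'\cap Z(G)$: the characters $\chi_i$ on $A$ cannot be made faithful together with a faithful action on $Z(G)/A$ within degree $3$.

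The main obstacle is exactly this exclusion of degree $3$ in (iii) and (iv). It requires handling genuinely projective $2$-dimensional constituents for each nontrivial cocycle class, and I expect to need explicit cocycle and $\alpha$-regular-element computations (Lemma \ref{faithful projective representation}) on $Z(G)$ for each group.
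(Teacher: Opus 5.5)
\textbf{The gap is in (iii).} You propose the extraspecial groups of order $32$ as the coverings $H$ with $H/Z(H)\cong D_8\times\mathbb{Z}/2\mathbb{Z}$. That cannot work. For an extraspecial $H$ one has $Z(H)=H'$, so $H/Z(H)\cong(\mathbb{Z}/2\mathbb{Z})^4$ is abelian. These groups only witness $\tau_{irr}((\mathbb{Z}/2\mathbb{Z})^4)=4$. As written, you have not shown that $\tau_{irr}(D_8\times\mathbb{Z}/2\mathbb{Z})$ exists, so $\tau=\tau_{irr}=4$ is not established for that group.

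The paper takes $H$ from the Hall--Senior family $\Gamma_6$ (and $\Gamma_7$ for $(\mathbb{Z}/2\mathbb{Z})^2\rtimes\mathbb{Z}/4\mathbb{Z}$) and applies Theorem~\ref{capable groups}. An explicit replacement is
$H=\mathbb{Z}/8\mathbb{Z}\rtimes\mathrm{Aut}(\mathbb{Z}/8\mathbb{Z})=\langle a\rangle\rtimes\langle s,t\rangle$, with $sas^{-1}=a^{-1}$ and $tat^{-1}=a^{5}$:
\begin{itemize}
\item $Z(H)=\langle a^4\rangle$, and $\bar t$ is central in $H/Z(H)$, so $H/Z(H)=\langle \bar a,s\rangle\times\langle \bar t\rangle\cong D_8\times\mathbb{Z}/2\mathbb{Z}$;
\item inducing a faithful linear character of $\langle a\rangle$ gives a faithful irreducible of degree $4$, so $\tau_{irr}(D_8\times\mathbb{Z}/2\mathbb{Z})\le 4$.
\end{itemize}
Your separate lower bound $\tau\ge 4$ then closes the case.

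\textbf{The degree-$3$ exclusion is easier than you expect.} Your appeal to Theorem~\ref{direct_sum} in (iv) is unnecessary. Its hypotheses are also unchecked there: $Z(G)$ is not cyclic, and condition (2) is not automatic. No cocycle computations are needed:
\begin{itemize}
\item In a group of order $16$, an irreducible $\alpha$-representation with $[\alpha]\neq 1$ has degree $2$ or $4$.
\item A $1$-dimensional $\alpha$-constituent forces $[\alpha]=1$.
\item Hence a faithful projective representation of degree $3$ is ordinary, of type $2+1$ or $1+1+1$.
\item Type $1+1+1$ fails by Lemma~\ref{one dim direct sum in non-abelian group}(1).
\item For type $2+1$, $\rho|_{Z(G)}=\chi_1 I_2\oplus\chi_2$. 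By Lemma~\ref{restriction}(2), faithfulness would force $\chi_1\chi_2^{-1}$ to be injective on $Z(G)\cong(\mathbb{Z}/2\mathbb{Z})^2$, which is impossible.
\end{itemize}
This is exactly your centre-restriction idea, and it handles all four groups in (iii) and (iv) uniformly. It is also more careful than the paper's citation of Lemma~\ref{direct sum of k irreducible ordinary representations}, whose part (ii) only gives $k\ge 2$ when $Z(G)\cong(\mathbb{Z}/2\mathbb{Z})^2$. Moreover, the paper's written proof of (iv) treats $D_8\times\mathbb{Z}/2\mathbb{Z}$ a second time instead of $\mathbb{Z}/4\mathbb{Z}\rtimes\mathbb{Z}/4\mathbb{Z}$. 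The embedded-$(\mathbb{Z}/2\mathbb{Z})^3$ argument used there would not apply to $\mathbb{Z}/4\mathbb{Z}\rtimes\mathbb{Z}/4\mathbb{Z}$, which has only three involutions. So your route ($\delta=3$ plus the above) is the right one for that group.

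\textbf{Part (i).} Your uniform treatment via non-capability and Corollary~\ref{non-capable groups} is a valid alternative. The paper instead uses $M(G)=1$ with Lemma~\ref{one dim direct sum in non-abelian group}(2), a citation to \cite{NG}, and Hall--Senior with Corollary~\ref{Elementary abelian Schur multiplier}(i). The epicentre facts you defer to follow directly from Theorem~\ref{epicenter}:
\begin{itemize}
\item For the two groups $\mathbb{Z}/8\mathbb{Z}\rtimes\mathbb{Z}/2\mathbb{Z}$, $M(G)=1$, so $Z^*(G)=Z(G)$.
\item For $\mathbb{Z}/4\mathbb{Z}\rtimes\mathbb{Z}/4\mathbb{Z}=\langle a,b\rangle$, take $N=\langle b^2\rangle$. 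Then $N\cap G'=1$, $G/N\cong D_8$, and $\mathrm{inf}\colon M(D_8)\to M(G)$ is an isomorphism of groups of order $2$.
\item For $Q_8\times\mathbb{Z}/2\mathbb{Z}$ and $Q_8\rtimes\mathbb{Z}/2\mathbb{Z}$, take $N=G'$. Then $G/N\cong(\mathbb{Z}/2\mathbb{Z})^3$, and by Lemma~\ref{exact sequence} the image of $\mathrm{inf}$ has order $8/2=4=|M(G)|$.
\end{itemize}
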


\begin{proof} 
$(i)$ For $G = \mathbb{Z}/8\mathbb{Z} \rtimes_{3} \mathbb{Z}/2\mathbb{Z}$ or $ \mathbb{Z}/8\mathbb{Z} \rtimes_{5} \mathbb{Z}/2\mathbb{Z}$, $M(G)= 1$. Hence, by Lemma \ref{one dim direct sum in non-abelian group}$(2)$, $\tau_{irr}(G)$ does not exist. If $G= Q_{8} \rtimes \mathbb{Z}/2\mathbb{Z}$, then $\tau_{irr}(G)$ does not exist due to \cite[Corollary 7.15]{NG}. \\ 
If $G$ is one of the groups $(\mathbb{Z}/4\mathbb{Z}) \rtimes (\mathbb{Z}/4\mathbb{Z})$ or $Q_{8} \times \mathbb{Z}/2\mathbb{Z}$, then by \cite{hall2n}, there does not exist any group $H$ of order $2^5$ such that $H/Z(H) \cong G$. Hence, the result follows from Corollary \ref{Elementary abelian Schur multiplier}$(i)$. \\

$(ii)$ For $G = D_{16}$ or $Q_{16}$, the result follows from Example 5.1 and Example 5.4. \\ 

$(iii)$ Let $G_1 = D_{8} \times \mathbb{Z}/2\mathbb{Z}$ and $G_2 = (\mathbb{Z}/2\mathbb{Z})^{2} \rtimes \mathbb{Z}/4\mathbb{Z}$. Consider $H_1 \in \Gamma_6$ and $H_2 \in \Gamma_7$ of order $2^5$. By \cite{hall2n}, $G_i \cong H_i/Z(H_i)$, $i=1,2$, and so $\tau_{irr}(G_i)$ exists due to Theorem \ref{capable groups}. By Example 5.1, Lemma \ref{direct sum of k irreducible ordinary representations} and Theorem \ref{one dim direct sum in non-abelian group}$(1)$, $\tau(G_i) > 3$. Hence, $\tau(G_i)= \tau_{irr}(G_i)= 4$. \\   


$(iv)$ Suppose $G_3 = Q_{8} \times \Z/2\Z$. By Lemma \ref{direct sum of k irreducible ordinary representations} and Theorem \ref{one dim direct sum in non-abelian group}$(1)$, $\tau(G_2) > 3$.  Since $\delta(G_3)= 3$ (by \cite[Theorem 3.1(3)]{kaur}), $\tau(G_3) \leq 4$ due to Lemma \ref{relation}. Hence, $\tau(G_3)= 4$.  
Let $G_4 = D_{8} \times \mathbb{Z}/2\mathbb{Z}$. Since $(\Z/2\Z)^3 \subset G_4$, by Theorem \ref{elementary abelian p-group}, $\tau(G_4) \geqslant 4$. Also, as $\delta(G_4)= 3$, by Lemma \ref{relation}, $\tau(G_4) \leq 4$. Hence, $\tau(G_4)= 4$. \\  

$(v)$ 
Since $\delta(G)= 2$, the result follows from Example 5.2.

\end{proof}

In the next result, we study $\tau(G)$ and $\tau_{irr}(G)$ for non-abelian groups $G$ of order $p^5$, where $p \geq 5$.

\begin{theorem} \label{p5}
For non-abelian groups $G$ of order $p^5$, with $p \geq 5$, the following hold. 

(i) $G$ is not capable if and only if $G$ is one of the following groups: $\Phi_{2}(221)b, \Phi_{2}(311)a$, $ \Phi_{2}(311)b$, $\Phi_{2}(311)c$, $\Phi_{2}(32)a_1$, $\Phi_{2}(32)a_2$, $\Phi_{2}(41)$, $\Phi_{2}(2111)a$, $\Phi_{2}(2111)b$, $\Phi_{2}(2111)c$, \\ $\Phi_{2}(2111)d$, $\Phi_{3}(311)a, \Phi_{3}(311)b_r, \Phi_{3}(221)a, \Phi_{3}(2111)a$, $ \Phi_{3}(2111)b_r$, $\Phi_{3}(2111)c$, $\Phi_{3}(2111)d$, $\Phi_{3}(2111)e$, $\Phi_{4}(221)a, \Phi_{4}(221)c, \Phi_{4}(221)d_r \ (r \neq \tfrac{1}{2}(p-1))$, $\Phi_{4}(221)e$, $\Phi_{4}(221)f_r$, $\Phi_{4}(2111)a, \\ \Phi_{4}(2111)b, \Phi_{4}(2111)c$, $\Phi_{5}(2111)$, $\Phi_{5}(1^5)$, $\Phi_{6}(221)a$, $\Phi_{6}(221)b_r \ \big(r \neq \tfrac{1}{2}(p-1)\big)$, $\Phi_{6}(221)c_r$, $\Phi_{6}(221)d_r$, $\Phi_{6}(2111)a$, $\Phi_{6}(2111)b_r$, $\Phi_{7}(2111)a, \Phi_{7}(2111)b_r$, $\Phi_{7}(2111)c$, $\Phi_8(32)$, $\Phi_{9}(2111)a, \\ \Phi_{9}(2111)b_r$, $\Phi_{10}(2111)a_r$, $\Phi_{10}(2111)b_r$. 
For these groups, $\tau_{irr}(G)$ does not exist. \\



(ii) If $G = \Phi_{2}(221)c$, then $\ti = p^2$. \\

(iii) If $G = \Phi_{2}(1^5)$, then $\ti = p^2$. \\

(iv) If $G= \Phi_{2}(221)a$ or $\Phi_{2}(221)d$, then $\ti$ does not exist.   \\ 

(v) Let $G \in \Phi_{2}$. Then one of the following holds:
\begin{enumerate}[label=(\alph*)]
	\item $\tau(G) = p+1$, if $Z(G)$ is cyclic;
	\item $\tau(G) = p+2$, if $Z(G) \cong \Z/p^{2}\Z \times \Z/p\Z$;
	\item $\tau(G) = p+3$ if $Z(G) \cong \Z/p\Z \times \Z/p\Z \times \Z/p\Z$. 
	\\
\end{enumerate}

(vi) If $G = \Phi_{3}(1^5)$ or $\Phi_{3}(221)b_{r}$, then $\ti= p^2$.  \\

(vii) For $G \in \Phi_{3} $, the following hold:
\begin{enumerate}[label=(\alph*)]
	\item $\tau(G) = p+1$, if $Z(G)$ is cyclic;
	\item $\tau(G) = p+2$, if $Z(G) \cong \Z/p\Z \times \Z/p\Z$.  \\
\end{enumerate} 

(viii) If $G = \Phi_{4}(221)b$ or $\Phi_{4}(1^5)$, then $\ti=p^2$.  \\

(ix) If $G = \Phi_{4}(221)f_0$ or $\Phi_{4}(221)d_{\tfrac{1}{2}(p-1)}$, then $\ti$ does not exist. \\ 

(x) If $G \in \Phi_{4}$ or $\Phi_{5}$, then $\tg = 2p$. \\




(xi) If $G$ is one of the groups $\Phi_{6}(221)b_{\tfrac{1}{2}(p-1)}$, $\Phi_{6}(221)d_0$ or $\Phi_{6}(1^5)$, then $\ti= p^2$.  \\

(xii) If $G$ is one of the groups $\Phi_{6}(221)a$, $\Phi_{6}(221)b_r \ \big(r \neq \tfrac{1}{2}(p-1)\big)$, $\Phi_{6}(221)c_r$, $\Phi_{6}(221)d_r$, $\Phi_{6}(221)b_{\tfrac{1}{2}(p-1)}$, $\Phi_{6}(221)d_0$, $\Phi_6(2111)a$ or  $\Phi_6(2111){b_r}$, then $\tg = 2p+1$. \\

(xiii) If $G= \Phi_{6}(1^5)$, then $\tg = 2p$. \\


(xiv) If $G = \Phi_{7}(1^5)$, then $\ti= p^2$. \\

(xv) If $G \in \Phi_{7}$, then $\tg = 2p$. \\


(xvi) If $G = \Phi_8(32)$, then $\tau(G)=p^2+1$. \\


(xvii) If $G= \Phi_{9}(2111)a$ or $\Phi_{9}(2111)b_r$, then $\tau(G) = p+1$. \\

(xviii) If $G = \Phi_{9}(1^5)$, then $\ti = \tg = p$. \\

(xix) If $G = \Phi_{10}(2111)a_r$ or $\Phi_{10}(2111)b_r$, then $\tg= p^2+1.$ \\ 

(xx) If $G = \Phi_{10}(1^5)$, then $\ti = \tg= p^2$.

\end{theorem}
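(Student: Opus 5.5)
The plan is to run the same machinery used for groups of order $p^4$ (Theorem \ref{$p^4$}) through the isoclinism families $\Phi_2,\dots,\Phi_{10}$ of James' classification. The statements about $\tau_{irr}$ and those about $\tau$ will be handled separately.

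For part (i) and the non-existence claims in (iv) and (ix), capability will be read off from \cite[Table 1]{hatuip5}. For the listed non-capable groups, Corollary \ref{non-capable groups} immediately gives that $\tau_{irr}(G)$ does not exist. The groups in (iv) and (ix) are capable, so the argument there is different. For these I would check, using \cite[Table 2]{hatuip5}, that the Schur multiplier is elementary abelian. Corollary \ref{Elementary abelian Schur multiplier}(i) then reduces the question to whether there is a group $H$ of order $p^6$ with $Z(H)\cong\Z/p\Z$ and $H/Z(H)\cong G$, and I would verify from \cite{james} that no such $H$ exists.

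For the existence statements (ii), (iii), (vi), (viii), (xi), (xiv), (xviii) and (xx), I would again use Corollary \ref{Elementary abelian Schur multiplier}(i). The main work is to identify, from \cite[Table~4.1]{james}, the family of groups $H$ of order $p^6$ with cyclic centre of order $p$ such that $H/Z(H)\cong G$. Once that family is known, $\mathrm{cd}(H)$ is read off, and Theorem \ref{embedding degree of a p group with cyclic centre} gives $\delta_{irr}(H)=p^2$, or $p$ in the case $\Phi_9(1^5)$. For (xviii) and (xx), Remark \ref{some results on p-groups}(ii) then yields $\tau=\tau_{irr}$ when the value is $p$. When the value is $p^2$, I would use Corollary \ref{Elementary abelian Schur multiplier}(ii), provided $\mathrm{cd}(G^*)=\{1,p^2\}$.

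The $\tau$ statements are proved by squeezing between an upper and a lower bound.

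\textbf{Upper bound.} The upper bound is $\tau(G)\le \delta(G)+1$ from Lemma \ref{relation}, with $\delta(G)$ taken from \cite{kaur}. This gives $p+1$, $p+2$ or $p+3$ in $\Phi_2$ and $\Phi_3$ according to the rank of $Z(G)$. It gives $2p+1$ in $\Phi_6$ and $p^2+1$ for $\Phi_8$ and $\Phi_{10}(2111)$. In $\Phi_4$, $\Phi_5$ and $\Phi_7$, I would instead construct a sum of two irreducible $\alpha$-representations of degree $p$ for an $\alpha$ inflated from $G/A$, with $A\subseteq Z(G)\cap G'$. Theorem \ref{direct_sum} shows this sum is faithful once $\chi_1\oplus\chi_2$ is faithful on $A$, which gives $\tau(G)\le 2p$.

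\textbf{Lower bound.} For non-capable $G$, Remark \ref{some results on p-groups}(iii) gives $\tau(G)>p$. Lemma \ref{direct sum of k irreducible ordinary representations} forces at least $\mathrm{rank}(Z(G))+1$ irreducible constituents in any faithful sum of ordinary representations. For genuine projective representations, Theorem \ref{direct_sum} with $A=Z(G)\cap G'$ lets me count constituents through the $\chi_i$. When all nonlinear degrees equal $p^2$ (as in $\Phi_8$ and $\Phi_{10}$), a sum of degree at most $p^2$ must be a single irreducible, which would contradict non-capability. So I expect $\tau(G)=p^2+1$ in those cases.

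I expect the main obstacle to be the lower bounds $2p$ in $\Phi_4$, $\Phi_5$, $\Phi_7$ and $2p+1$ in $\Phi_6$. These require ruling out every faithful sum of smaller degree: $p$ plus linear pieces, and, in the $\Phi_6$ case, two degree-$p$ constituents. For this I would pass to the representation group $G^*$ via Theorem \ref{bijective correspondence} and use Clifford theory (Theorem \ref{Clifford's theory}) to show that all degree-$p$ projective irreducibles share a common scalar kernel in $Z(G)$. Lemma \ref{restriction}(2) would then finish the argument.
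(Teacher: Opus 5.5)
Your overall architecture matches the paper for (i), (iii), (iv), (vi), (viii), (xi), (xiv), (xviii) and for the $\delta(G)+1$ squeezes in $\Phi_2$, $\Phi_3$, $\Phi_9$. Several other parts, however, rest on results whose hypotheses fail for the groups at hand.

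\textbf{Corollary~\ref{Elementary abelian Schur multiplier} used outside its range.}

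(ii). Part (i) of the corollary requires $M(G)$ to be elementary abelian. For $\Phi_2(221)c$, the representation group in the paper has kernel $\langle\gamma_1,\gamma_2\rangle$ with $\gamma_2$ of order $p^2$, so $M(G)\cong\Z/p^2\Z\times\Z/p\Z$. Moreover, the only cocycles with no non-trivial regular central element are those with $\nu(\alpha,\gamma)$ a primitive $p^2$-th root of unity, i.e.\ classes of order $p^2$. Hence every member of $B_G$ has order $p^7$. Your search among groups of order $p^6$ therefore finds nothing and would give the wrong answer to (ii). The paper instead works in $G^*$ and shows by Clifford theory that no $\Gamma\in\mathrm{Irr}(G^*\mid\chi)$ with $\chi(\gamma_2)$ of order $p^2$ has degree $p$.

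(ix). The paper has $M(G)\cong\Z/p^2\Z$, so your ``check that $M(G)$ is elementary abelian'' fails here too, and coverings of order $p^7$ are left untreated.

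(xx). Part (ii) of the corollary does not apply: $\mathrm{cd}(\Phi_{10}(1^5))=\{1,p,p^2\}$ and $G$ is a quotient of $G^*$, so $p\in\mathrm{cd}(G^*)$. The paper shows instead (its Case (c)) that every degree-$p$ constituent $\Gamma$ over a fixed $\chi'\neq 1$ satisfies $\Gamma(\alpha_4)=\xi^{-q}I$. Hence no sum of degree-$p$ $\operatorname{tra}(\chi')$-irreducibles is faithful on $Z(G)=\langle\alpha_4\rangle$.

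(xvi), (xix). Your premise that all non-linear degrees are $p^2$ in $\Phi_8$ and $\Phi_{10}$ is false. $\Phi_8(32)\cong\Z/p^3\Z\rtimes\Z/p^2\Z$ has irreducibles of degree $p$, and $\mathrm{cd}(G)=\{1,p,p^2\}$ in $\Phi_{10}$. The paper gets (xvi) from Lemma~\ref{group with cyclic centre of order p}. In (xix) it excludes a faithful sum of two degree-$p$ constituents via the bound $\tau(G)\ge\min\delta(A.G)=\delta(G^*)=p^2$ from Theorem~\ref{projective embedding degrees}.

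\textbf{The $\Phi_4$--$\Phi_7$ cases.}

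Inflated cocycles. Theorem~\ref{direct_sum} needs hypothesis (1) or (2), which you omit. In $\Phi_4$ (and in $\Phi_6(1^5)$) one has $Z(G)\cong(\Z/p\Z)^2$. If $\alpha$ is inflated from $G/Z(G)$, then $\rho_i|_{Z(G)}=\chi_iI$. So $\ker(\chi_1\chi_2^{-1})\neq 1$ lies in the projective kernel of $\rho_1\oplus\rho_2$, and two constituents are never faithful. You would need a cyclic $A$ together with a verification of (2), or a cocycle not inflated from $G/Z(G)$. The paper's own argument for (x) takes $A=Z(G)$ and does not address this, so you cannot simply follow it.

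$\Phi_6(1^5)$. You treat all of $\Phi_6$ as having $\tau=2p+1$ and propose to show that degree-$p$ projective irreducibles share a scalar kernel in $Z(G)$. This is false for $\Phi_6(1^5)$, where $\tau=2p$ by (xiii). There the paper works in a representation group of order $p^8$. It constructs two degree-$p$ $\operatorname{tra}(\chi')$-irreducibles on which $\beta_1$ acts by a non-scalar diagonal $D$, and $\beta_2$ acts by $\xi D$ and $\xi^2D$ respectively, so that their sum is faithful. Your plan contains no such construction.

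$\Phi_7$. The real content of the upper bound is the existence of a single $\beta$ and two distinct $\chi_1,\chi_2$ such that $\mathrm{Irr}^{\beta\operatorname{tra}(\chi_i)}(\Phi_2(1^4))$ contains elements of degree $p$. The paper obtains this from a counting argument in $\mathrm{H}^2(\Phi_2(1^4),\C^{\times})$, comparing $(p^2-1)(p^2-p)$ with $(p^3-1)(p-1)$. Your proposal does not supply this step.
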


\begin{proof}

$(i)$ By considering the list of capable groups given in \cite[Table 2]{hatuip5}, the result follows from Corollary \ref{non-capable groups}.  \\

$(ii)$ Suppose $G= \Phi_{2}(221)c =\langle \alpha, \alpha_1, \alpha_2, \gamma \mid [\alpha_1,\alpha] =\gamma^p= \alpha_2,\; \alpha^{p^2}= \alpha_1^{p} = \alpha_2^{p}=1 \rangle$. 
Then the group 
\begin{eqnarray*}
	G^* &=& \langle \Bar{\alpha}, \Bar{\alpha_i}, \Bar{\gamma}, \gamma_i, \mid [\Bar{\alpha_1}, \Bar{\alpha}] = \Bar{\gamma}^p = \Bar{\alpha_2},\ [\Bar{\gamma}, \Bar{\alpha_1}] = \gamma_1,\ \\
	&& [\Bar{\gamma}, \Bar{\alpha}] = \gamma_2,\ \Bar{\alpha}^{p^2}= \Bar{\alpha_i}^{p} = \gamma_1^{p}= \gamma_2^{p^2}= 1 \ (i = 1,2) \rangle  
\end{eqnarray*}
is a representation group of $G$ such that ${G}^*/\langle \gamma_1, \gamma_2 \rangle \cong G$. Let $A= \langle \gamma_1, \gamma_2 \rangle$.
The proof uses the same argument as in the proof of Theorem~\ref{Heisenberg group}(1).
By \cite[Theorem 9.4]{Mackey}, it follows that any cocycle of $G$ is cohomologous to a cocycle $\nu$ of the form
$$
\nu\big(\gamma^{m_1}\alpha^{n_1}\alpha_1^{r_1}, \gamma^{m_2}\alpha^{n_2}\alpha_1^{r_2}\big) = \epsilon^{(m_2+p n_2r_1)n_1+\frac{r_1pn_2(n_2-1)}{2}}\lambda^{r_1m_2},
$$ for some $\epsilon$, $\lambda \in \C^\times$ such that $\epsilon^{p^2}= \lambda^{p}= 1$. It is easy to check that $1$ is the only $\nu$-regular element of $Z(G)$ if and only if $\nu(\alpha, \gamma)= \epsilon$ is a primitive $p^2$th root of unity. By Lemma \ref{faithful projective representation}, $G$ admits a faithful irreducible $\nu$-representation only for such $\nu$, and let $S$ be the set of all such $\nu$. Then $\tau_{irr}(G)= \operatorname{min}\{\operatorname{deg}(\operatorname{\rho}) \mid \rho \in \mathrm{Irr}^\nu(G), \nu \in S\}$. By Theorem \ref{bijective correspondence}, it follows that $$\tau_{irr}(G)=\operatorname{min}\{\operatorname{deg}(\operatorname{\Gamma}) \mid \Gamma \in \mathrm{Irr}(G^*\mid \chi), \chi\in \Hom(A, \C^\times) \text{ such that } \operatorname{tra}(\chi) \in S\}.$$ 
Let $T= \{\chi\in \Hom(A, \C^\times)\mid \chi(\gamma_2) \text{ is a primitive } p^2 \text{th root of unity}\}.$
First, we claim that $\operatorname{tra}(\chi) \in S$ if and only if $\chi\in T$. If $\chi^p= 1$, then $\operatorname{tra}(\chi)^p= 1$.  
Hence, if $\operatorname{tra}(\chi) \in S$, then $\chi\in T$. Conversely, let $\chi\in T$ and $\mu: G \to G^*$ be a section of $G^* \twoheadrightarrow G$ defined by $\mu(\gamma^{m} \alpha^{n} \alpha_1^{r}) = \Bar{\gamma}^{m} \Bar{\alpha}^{n} \Bar{\alpha_1}^{r}$. Then, 
$$\operatorname{tra}(\chi)(\alpha, \gamma) = \chi(\mu(\alpha)\mu(\gamma)\mu(\alpha\gamma)^{-1})=\chi(\Bar{\alpha}\Bar{\gamma}(\Bar{\gamma} \Bar{\alpha})^{-1})=\chi(\gamma_2^{-1}).$$ Thus, $\operatorname{tra}(\chi) \in S$ and the claim is proved.
Therefore, $$\tau_{irr}(G)=\operatorname{min}\{\operatorname{deg}(\operatorname{\Gamma}) \mid \Gamma \in \mathrm{Irr}(G^*\mid \chi),  \chi \in T\}.$$

Let $\chi \in T$ and $\Gamma \in \mathrm{Irr}(G^*\mid \chi)$. We have $\chi(\gamma_2)= \xi$,  where $\xi$ is a primitive $p^2$th root of unity. 
Now we 
show that $\operatorname{deg}(\Gamma)$ cannot be $p$.
Consider the normal abelian subgroup $N= \langle \Bar{\gamma}, \gamma_1, \gamma_2 \rangle$ of $G^*$ of order $p^5$ and $\chi_1 \in \Hom(N, \C^\times)$ such that $\chi_1|_A= \chi$. Let $\Gamma \in \mathrm{Irr}(G^*\mid \chi_1)$.
If $\Bar{\alpha_1} \notin {I_{{G}^*}(\chi_1)}$ and $\operatorname{deg}(\Gamma)=p$, then by  Clifford's theory, $\Bar{\alpha} \in {I_{{G}^*}(\chi_1)}$, which is not possible. Hence, $\Bar{\alpha_1} \in {I_{{G}^*}(\chi_1)}$. Then, $\chi_1$ extends to $\chi_2: N_1 \to \C^\times$, where $N_1= \langle N, \Bar{\alpha_1} \rangle$. Since $\chi_2^{(\Bar{\alpha}^p)}(\Bar{\gamma})\neq \chi_2(\Bar{\gamma})$, hence $\operatorname{deg}(\Gamma)$ cannot be $p$.  
Therefore, $\tau_{irr}(G)= p^2$. \\



$(iii)$  Let $G= \Phi_{2}(1^{5})$. Then, $B_{G}=\{H \mid |H|= p^6, H \in \Phi_{22}\}$. If $H \in B_{G}$, then $\delta_{irr}(H)= p^2$ in view of Theorem \ref{embedding degree of a p group with cyclic centre}. Hence, $\tau_{irr}(G)= p^2$ due to Corollary \ref{Elementary abelian Schur multiplier}$(i)$. \\


$(iv)$ If $G= \Phi_{2}(221)a$ or $\Phi_{2}(221)d$, it is easy to check that there does not exist any covering $H$ of order $p^6$ in \cite[Table 4.1]{james} such that $H/Z(H) \cong G$. Hence, the result follows from Corollary \ref{Elementary abelian Schur multiplier}$(i)$. \\ 

$(v)$ Let $G \in \Phi_2$. 
\begin{enumerate}[label=(\alph*)]
	\item If $Z(G)$ is cyclic, then $\delta(G)= p$, by \cite[Theorem 3.1(4)(i)]{kaur}. The result now follows from Remark \ref{some results on p-groups}$(iii)$. 
	\item  If $Z(G) \cong \Z/p^{2}\Z \times \Z/p\Z$, then $\delta(G)= p+1$ and the result follows from Remark \ref{some results on p-groups}$(iii)$, Lemma \ref{direct sum of k irreducible ordinary representations}. 
	\item If $Z(G) \cong \Z/p\Z \times \Z/p\Z \times \Z/p\Z$, then $\delta(G)= p+2$. Using arguments similar to those in $(ii)$, the result follows. 
\end{enumerate}

$(vi)$  Let $G_1 = \Phi_{3}(1^{5})$. Then, $B_{G_1}= \{H \mid  |H|= p^6, H \in \Phi_{24} \text{ or } \Phi_{27}\}$. If $H \in B_{G_1}$, then $\delta_{irr}(H)= p^2$ in view of Theorem \ref{embedding degree of a p group with cyclic centre}. Hence, $\tau_{irr}(G_1)= p^2$ due to Corollary \ref{Elementary abelian Schur multiplier}$(i)$.
Let $G_2 = \Phi_{3}(221)b_{1}$ and $G_3 = \Phi_{3}(221)b_{\nu}$. Then $B_{G_2}= \{H \mid |H|= p^6, H \in \Phi_{25} \text{ or } \Phi_{28}\}$ and $B_{G_3}= \{H \mid |H|= p^6, H \in \Phi_{26} \text{ or } \Phi_{29} \}$.
By similar arguments, it can be shown that $\tau_{irr}(G_2)=\tau_{irr}(G_3)= p^2$. \\




$(vii)$ 
If $G \in \Phi_{3}$, then $cd(G)=\{1,p\}$ by \cite[Lemma 5.3]{Prajapati}. \\
$(a)$ If $Z(G)$ is cyclic, then by \cite[Theorem 2.32(b)]{isaacs}, $G$ has a faithful irreducible ordinary representation of degree $p$. So, $\delta(G)= p$, in view of Lemma \ref{one dim direct sum in non-abelian group}$(1)$. By Lemma \ref{relation} and Remark \ref{some results on p-groups}$(iii)$, $\tau(G) = p+1$. 
$(b)$ If $Z(G)\cong \Z/p\Z \times \Z/p\Z$, then by Remark \ref{some results on p-groups}$(iii)$ and Lemma \ref{direct sum of k irreducible ordinary representations}, we have $\tau(G) > p+1$. 
Consider the group $$G= \Phi_{3}(221)a= \langle \alpha, \alpha_1, \alpha_2, \alpha_3
\mid [\alpha_1,\alpha]= \alpha_2,\; [\alpha_2,\alpha]= \alpha^{p}= \alpha_3,\;
\alpha_1^{p^2}= \alpha_2^{p}= \alpha_3^{p} =1 \rangle.$$ Then $Z(G)= \langle \alpha_3,\; \alpha_1^{p} \rangle$. Let $\chi \in \operatorname{Hom}(Z(G),\mathbb{C}^{\times})$ be defined on the generators by $\chi(\alpha_1^{p}) = 1$, $\chi(\alpha_3)= \xi$, where $\xi$ is a primitive $p$th root of unity. Let $\rho \in \operatorname{Irr}(G \mid \chi)$. As $\alpha_3 \in Z(G) \cap G^\prime$ and $\chi(\alpha_3)\neq 1$, we have $\operatorname{deg}(\rho)= p$. 
Now, let $\chi_1 \in \operatorname{Hom}(G,\mathbb{C}^{\times})$ be defined on the generators by $\chi_1(\alpha_1)= \xi_1$, $\chi_1(\alpha)=1$, where $\xi_1$ is a primitive $p^2$th root of unity. Since $\chi \oplus \chi_1|_{Z(G)} \oplus 1_{Z(G)}$ is faithful on $Z(G)$, $\rho \oplus \chi_1 \oplus 1_G$ is faithful on $G$, follows from Lemma \ref{restriction}$(2)$. Hence, $\tau(G) = p+2$.  
Similarly, for all other groups $G$ whose centre is not cyclic, it can be shown that $\tg = p+2$. \\

$(viii)$ Let $G_1 = \Phi_{4}(221)b$. Then, $B_{G_1}= \{H \mid H \in \Phi_{34}, |H|= p^6 \}$. If $H \in B_{G_1}$, then $\delta_{irr}(H)= p^2$ by Theorem \ref{embedding degree of a p group with cyclic centre}. Hence, using Corollary \ref{Elementary abelian Schur multiplier}$(i)$, $\tau_{irr}(G_1)= p^2$.
Let $G_2 = \Phi_{4}(1^5)$. Then, $B_{G_2} = \{H \mid H \in \Phi_{31}, \Phi_{32} \text{ or } \Phi_{33}, |H|= p^6\}$. A similar argument shows that $\tau_{irr}(G_2)= p^2$. \\

$(ix)$
Consider the group $$G_1= \Phi_4(221){f_0}
= \langle \alpha, \alpha_i,  \beta_i \mid [\alpha_i, \alpha] = \beta_1,\ 
\alpha_1^p = \beta_2,\ \alpha_2^p = \beta_1^{\,\nu}, \ \alpha^p = \beta_1^p = 1 \ (i = 1,2) \rangle.$$ Then $G_1^* = \langle \alpha, \alpha_i,  \beta_i, \gamma \mid [\alpha_i, \alpha] = \beta_i,\ [\alpha_1, \alpha_2] = \gamma,\ \alpha_1^p = \beta_2,\ \alpha_2^p = \beta_1^{\,\nu}, \ \alpha^p = \beta_i^p = \gamma^{p^2}= 1 \ (i = 1,2) \rangle$ is a representation group of $G_1$ and $Z(G_1^*) = \langle \gamma, \beta_1, \beta_2 \rangle$. Since $M(G_1) \cong \Z/p^2\Z$, $\tau_{irr}(G_1)$ does not exist due to \cite[Chapter 4, Theorem 3.1]{karpilovsky2}. 
Next, consider the group $$G_2 = \Phi_4(221)d_{\tfrac{1}{2}(p-1)} = \langle \alpha, \alpha_i,  \beta_i \mid [\alpha_i, \alpha] = \beta_i,\ \alpha_1^p = \beta_1^{-1},\ \alpha_2^p = \beta_2,\ \alpha^p = \beta_i^p = 1 \ (i = 1,2) \rangle.$$ Then, $G_2^* = \langle \alpha, \alpha_i,  \beta_i, \gamma \mid [\alpha_i, \alpha] = \beta_i,\ [\alpha_1, \alpha_2] = \gamma,\ \alpha_1^p = \beta_1^{-1},\ \alpha_2^p = \beta_2,\ \alpha^p = \beta_i^p = \gamma^{p^2}= 1 \ (i = 1,2) \rangle$ is a representation group of $G_2$ and $Z(G_2^*) = \langle \gamma, \beta_1, \beta_2 \rangle$. Using the same argument as above, $\tau_{irr}(G_2)$ does not exist. \\


$(x)$ \label{Phi4} If $G \in \Phi_{4}$, then $Z(G) \cong \Z/p\Z \times \Z/p\Z$ and $G/Z(G) \cong (\Z/p\Z)^3$. By Lemma \ref{direct sum of k irreducible ordinary representations}, the direct sum of $p$-dimensional and one-dimensional ordinary representations of $G$ can not be faithful as projective. Hence, $\tau(G) >p+1$, in view of Remark \ref{some results on p-groups}$(iii)$. Since $Z(G) \subseteq G^\prime$,  $\psi|_{Z(G)}= 1$ for any $\psi \in \Hom(Z(G), \C^\times)$. This forces $\tau(G) >2p-1$. 
Now, Lemma \ref{exact sequence} yields the following exact sequence: \\

$1 \to \mathrm{Hom}(Z(G), \mathbb{C}^{\times}) 
\xrightarrow{\mathrm{tra}} 
\mathrm{H}^2(G/Z(G), \mathbb{C}^{\times}) 
\xrightarrow{\mathrm{inf}} \mathrm{H}^2(G, \mathbb{C}^{\times})$. \\

Let $\alpha$ be a non-trivial cocycle in $\operatorname{Im} \operatorname{(inf)}$, and let $\chi_i \in\mathrm{Hom}(Z(G), \mathbb{C}^{\times})$, $i=1,2$, be such that their direct sum $\chi_1 \oplus \chi_2$ is faithful on $Z(G)$. Let $\tilde{\rho_i} \in \mathrm{Irr}^{\beta \mathrm{tra}(\chi_i)}\big(G/Z(G)\big)$, $i=1,2$, and $\rho_i \in \mathrm{Irr}^{\alpha}(G)$ correspond to $\tilde{\rho_i}$ as described in Remark \ref{remark for bijective correspondence via inflation}. As $|G/Z(G)|= p^3$, so $\operatorname{deg}(\tilde{\rho_i})= p$. Now, by Theorem \ref{direct_sum}, $\rho_1 \oplus \rho_2$ is faithful of degree $2p$. Hence, $\tau(G)= 2p$.
If $G \in \Phi_{5}$, the result follows directly from Theorem \ref{extraspecial $p$-group}. \\

$(xi)$ Let $G_1 = \Phi_{6}(221)b_{\tfrac{1}{2}(p-1)}$. Then, $B_{G_1}= \{H \mid |H|= p^6, H \in \Phi_{42}\}$. If $H \in B_{G_1}$, then $\delta_{irr}(H)=p^2$ in view of Theorem \ref{embedding degree of a p group with cyclic centre}. Hence, $\tau_{irr}(G_1)= p^2$ due to Corollary \ref{Elementary abelian Schur multiplier}$(i)$.
If $G = \Phi_{6}(221)d_0$ or $\Phi_{6}(1^5)$, then by similar arguments, $\tau_{irr}(G)= p^2$. \\




$(xii)$ First, we show that if  $G\in \Phi_6$, then  $2p-1 < \tau(G) \leq 2p+1$. Proceeding along the same lines as the proof of $(x)$, we get $\tau(G) > 2p-1$. Now, let $\chi_1$ and $\chi_2$ be two non-trivial, distinct elements of $\Hom(Z(G), \C^\times)$ such that $\chi_1 \oplus \chi_2$ is a faithful ordinary representation of $Z(G)$. Let $\rho_i \in \operatorname{Irr}({G} \mid \chi_i)$, $i=1,2$. Since $cd(G)=\{1,p\}$ by  \cite[Table 4.1]{james}, $\operatorname{deg}(\rho_i)=p$. 
From Lemma \ref{restriction}(2), it follows that $(\oplus_{i=1}^2 \rho_i)\oplus 1_G$ is faithful on $G$. Hence, $\tau(G) \leq 2p+1$. 
By Lemma \ref{direct sum of k irreducible ordinary representations}, $G$ cannot possess an ordinary representation of degree $2p$ which is faithful. 

If $G$ is one of the groups $\Phi_{6}(221)a$, $\Phi_{6}(221)b_r \ \big(r \neq \tfrac{1}{2}(p-1)\big)$, $\Phi_{6}(221)c_r$ or $\Phi_{6}(221)d_r$, then $M(G)= 1$. Hence, $\tau(G) = 2p+1$.

Now, for other groups, it is enough to show that $G$ cannot possess a faithful $\alpha$-representation of degree $2p$ for any non-trivial cocycle $\alpha$.
\\
Let $G= \Phi_{6}(221)b_{\tfrac{1}{2}(p-1)}$ or $\Phi_{6}(221)d_0$ and $\rho \in \mathrm{Irr}^\alpha(G)$. By the proof of \cite[Theorem 4.3, $(xvii)$]{sabnam}, it follows that $\operatorname{deg}(\rho)= p^2$. Hence, the result holds. \\
Suppose $G_1 = \Phi_{6}(2111)a= \langle \alpha_1, \alpha_2, \beta, \beta_1, \beta_2
\mid [\alpha_1,\alpha_2]= \beta,\ [\beta,\alpha_i]= \beta_i,\ \alpha_1^p= \beta_1,\ \alpha_2^p= \beta^p= \beta_i^p= 1\ (i=1,2) \rangle$. From the proof of \cite[Theorem 4.3, (xvi)]{sabnam}, it follows that
\begin{eqnarray*}
	{G}_1^* & = \langle \alpha_1, \alpha_2, \beta, \beta_i,\ 1 \le i \le 3 \mid [\alpha_1,\alpha_2]  =\beta,\ [\beta,\alpha_j]= \beta_j,\ \\
	& [\beta_2,\alpha_2]= \beta_3,\ \alpha_1^p= \beta_1,\ \alpha_2^p = \beta^p = \beta_i^p =1\ (j=1,2) \rangle    
\end{eqnarray*}
is a representation group of $G_1$ of order $p^6$, and ${G}_1^*/\langle \beta_3 \rangle \cong G_1$. 
Now consider the abelian normal subgroup
$N_1= \langle \beta, \beta_i,\ 1 \le i \le 3 \rangle$ of ${G}_1^*$ of order $p^4$. 
Taking $A_1=\langle \beta_3 \rangle$, we have an isomorphism $\tra: \Hom(A_1,\C^\times) \to \mathrm{H}^2(G_1, \C^\times)$. Let $\chi' \in \mathrm{Hom}(\langle \beta_3 \rangle, \C^\times)$   such that $\chi'\neq 1$ and $\tra(\chi')=[\alpha]$. Suppose $\rho_i \in \mathrm{Irr}^\alpha(G_1)$ correspond to $\Gamma_i \in \operatorname{Irr}({G}_1^* \mid \chi')$ via the correspondence given in Theorem \ref{bijective correspondence}. Now, we show that if deg$(\rho_i) =p$, then $\rho_1 \oplus \rho_2$ is not faithful on $G_1$.
Suppose $\chi \in \mathrm{Irr}(N_1)$ such that $\chi|_{\langle \beta_3 \rangle}= \chi'$.
Let $\chi(\beta)=\xi^{i_1},\ \chi(\beta_1)=\xi^{i_2},\ 
\chi(\beta_2)=\xi^{i_3},\ \chi(\beta_3)=\xi^{i_4}$,
where $\xi$ is a primitive $p$th root of unity and
$0 \le i_1,i_2,i_3,i_4 \le (p-1)$. 
Since $i_4 \ne 0$ and deg$(\Gamma_i)= p$, from the proof of \cite[Theorem 4.3, $(xvi)$]{sabnam}, it follows that $i_2= 0$. In this case, $H_1 = I_{{G}_1^*}(\chi)= \langle N_1,\alpha_1\rangle$ is of order $p^5$. Let $\chi_1$, $\chi_2 \in \operatorname{Irr}(N_1)$ be such that $\chi_i|_{\langle \beta_3 \rangle}= \chi'$ and $\chi_i(\beta_1)=1$ for $i=1,2$. Then, $\Gamma_i = \operatorname{Ind}^{G_1^*}_{I_{G_1^*}(\chi_i)}(\tilde{\chi}_i)$, where $\tilde{\chi}_i:I_{G_1^*}(\chi_i)\to \C^\times$ is an extension of $\chi_i$. Choosing $t_i= \alpha_2^i$ for $i=1,2,..,p$, as left coset representatives of $H_1$ in $G_1^*$, we get $\Gamma_i(\beta_1)= I_{p \times p}$. Hence, $(\Gamma_1 \oplus \Gamma_2)(\beta_1)= I_{2p \times 2p}$ and so $\rho_1 \oplus \rho_2|_{Z(G_1)}$ is not faithful. Consequently, Lemma \ref{restriction}$(2)$ yields that $\tau(G_1)= 2p+1$. \\
Suppose $G_2= \Phi_6(2111){b_r}$. It follows from  \cite[Proof of Theorem 4.1(xvi)]{sabnam} that $(G_1,G_2)$ satisfies the hypotheses of Theorem \ref{tau(G_1) = tau(G_2)}. Hence, $\tau(G_2)= \tau(G_1)= 2p+1$. \\



(xiii) Let $G = \Phi_{6}(1^5) = \langle \alpha_1, \alpha_2, \beta, \beta_i \mid 
[\alpha_1,\alpha_2] = \beta,\,
[\beta,\alpha_i] = \beta_i,\,
\alpha_i^p = \beta^p = \beta_i^p = 1 \ (i=1,2) \rangle$. Now consider the following group 
\begin{eqnarray*}
	G^*&=&\langle \alpha_1, \alpha_2, \gamma ,\beta, \beta_j, 1\leq j \leq 4  \mid 
	[\alpha_1,\alpha_2] = \beta,\, 
	[\beta,\alpha_i] = \beta_i,\, [\beta_2,\alpha_2] = \beta_3,\, \\
	&& [\beta_1,\alpha_1] = \beta_4,\,
	[\beta_1,\alpha_2] = [\beta_2,\alpha_1] = \gamma,\,
	\alpha_i^p = \beta^p = \beta_j^p = \gamma^p = 1 \ (i=1,2) \rangle \\
	&=& \big(\langle  \alpha_2, \gamma ,\beta, \beta_j, 1\leq j \leq 3  \mid 
	[\beta,\alpha_2] = \beta_2,\, [\beta_2,\alpha_2] = \beta_3,\, [\beta_1,\alpha_2] =  \gamma,\, \\
	&& \alpha_2^p = \beta^p = \beta_j^p = \gamma^p = 1 \ \rangle \times \langle \beta_4 \rangle\big) \rtimes \langle \alpha_1 \rangle \\
	&=&\big(\Phi_{16}(1^6) \times  \Z/p\Z \big) \rtimes  \Z/p\Z.
\end{eqnarray*}
Then $G^*$ is a representation group of $G$ of order $p^8$ such that $G^*/Z(G^*)= G$, where $Z(G^*)= \langle \beta_3, \beta_4, \gamma \rangle$. 
Consider the abelian normal subgroup $N =\langle \beta, \gamma, \beta_i, 1 \leq i \leq 4 \rangle$ of ${G}^*$ of order $p^6$. Take $A= Z(G^*)$. Let $\chi' \in \mathrm{Irr}(A)$ such that $\chi'(\beta_3) = \xi$, $\chi'(\beta_4) = \xi$ and $\chi'(\gamma) = \xi$ where $\xi$ is a primitive $p$th root of unity. First, our aim is to find $\Gamma_i \in\operatorname{Irr}(G^* \mid \chi')$, $i=1,2$, of degree $p$. 
We use Clifford's theory here. Let $\chi \in \operatorname{Irr}(N)$ such that
$\chi|_{A} = \chi'$. Now $g \in {G}^*$ can be written as $g = \alpha_1^m \alpha_2^n n'$
for some $n' \in N$.
Now we have
$$\chi^{(\alpha_1^m \alpha_2^n)}(\beta^j \beta_1^k \beta_2^l z) = \chi(\beta^j \beta_1^k \beta_2^l z)
\iff \chi(\alpha_1^m \alpha_2^n \beta^j \beta_1^k \beta_2^l \alpha_2^{-n} \alpha_1^{-m}) = \chi(\beta^j \beta_1^k \beta_2^l).$$
Upon simplification, we obtain that
$\alpha_1^m \alpha_2^nn' \in I_{{G}^*}(\chi)$
if and only if
\[
\chi(\beta_1^{-jm} \beta_2^{-jn} \beta_3^{j\binom{p-n}{2}-ln} \beta_4^{-km+j\binom{p-m}{2}} \gamma^{-jnm-lm-kn}) = 1.
\]
Let $\chi(\beta_1) = 1$ and $\chi(\beta_2) = \xi^r$ where $0 < r \le (p-1)$. Then,
\[\xi^{-rjn+(j\binom{p-n}{2}-ln)+(-km+j\binom{p-m}{2})-jnm-lm-kn} = 1.\] 
Taking $j=0$, $k=0$, $l=1$, we have $\xi^{-n-m} = 1$, which implies $n= -m$.
Hence, $N_1= I_{{G}^*}(\chi) = \langle N, \alpha_1^{-2^{-1}r}\alpha_2^{2^{-1}r} \rangle= \langle N, \alpha_1^{-1} \alpha_2 \rangle$ is of order $p^7$. Let $\chi_1$ and $\chi_2 \in \operatorname{Irr}(N)$ be such that $\chi_i(\beta_1)= 1$ and $\chi_1(\beta_2)= \xi$, $\chi_2(\beta_2)= \xi^2$, $\chi_i|_{A} = \chi'$ for $i=1,2$. 
Since $I_{G^*}(\chi_i)/N$, $i=1,2$ is cyclic, $\chi_i$ extends to $\tilde{\chi}_i: I_{G^*}(\chi_i)\to \C^\times$. Consider $\Gamma_i = \operatorname{Ind}^{G^*}_{I_{G^*}(\chi_i)}(\tilde{\chi}_i)$. Let $\{l_t= \alpha_2^t, t=1,2,..,p\}$ be a set of left coset representatives of $I_{G^*}(\chi_i)$ in $G^*$. Then $\Gamma_i(\beta_1)= D_{p \times p}$, $\Gamma_1(\beta_2)= \xi{D}_{p \times p}$ and $\Gamma_2(\beta_2)= \xi^2 D_{p \times p}$, where $D_{p \times p}$ is a diagonal matrix such that $D_{ii}= \xi^{i}$. Let $\alpha= \operatorname{tra}(\chi')$ and $\rho_i \in \mathrm{Irr}^\alpha(G)$ correspond to $\Gamma_i$, where the correspondence is given by Theorem \ref{bijective correspondence}. Let $\rho= \rho_1 \oplus \rho_2$. 
Then, for $0\leq m, n \leq p-1$,
$$\rho(\beta_1^m \beta_2^n)= \left[\begin{array}{cccc|cccc}
	\xi^{m}\xi^{n}\xi^{n} & & & & & & &\\
	& \xi^{2m}\xi^{n}\xi^{2n} & & & & & & \\
	& & \ddots & & & & & \\
	& & & \xi^{n} & & & & \\
	\hline
	& & & & \xi^{m}\xi^{2n}\xi^{n} & & & \\
	& & & & & \xi^{m}\xi^{2n}\xi^{2n} & &  \\
	& & & & & & \ddots & \\
	& & & & & & & \xi^{2n}  \\
\end{array}\right]_{2p \times 2p}$$
is a diagonal matrix.
Since $\rho|_{Z(G)}$ is faithful, it follows from Theorem \ref{restriction} that $\rho$ is faithful. Hence, $\tau(G) = 2p$ as $2p-1 < \tau(G) \leq 2p+1$ for $G\in \Phi_6$ (as shown in the proof of $(xii)$). \\

$(xiv)$ Let $G= \Phi_{7}(1^{5})$. Then, $B_{G}=\{H \mid |H|= p^6, H \in \Phi_{30}\}$. If $H \in B_{G}$, then $\delta_{irr}(H)= p^2$ in view of Theorem \ref{embedding degree of a p group with cyclic centre}. Hence, $\tau_{irr}(G)= p^2$ due to Corollary \ref{Elementary abelian Schur multiplier}$(i)$.  \\

$(xv)$ \label{Phi7}
If $G \in \Phi_{7}$, then $Z(G)\cong \Z/p\Z$ and $G/Z(G) \cong \Phi_2(1^4)$. Let $G_1= G/Z(G)$. Then 
\[
G_1 \cong \langle \alpha, \alpha_i, 1\leq i \leq 3 \mid [\alpha_1,\alpha] = \alpha_2,\; \alpha^{p}= \alpha_i^{p} =1 \rangle.
\]
Using \cite[Theorem 9.4]{Mackey}, it is easy to check that every cocycle of $G_1$ is cohomologous to a cocycle $\gamma$ of the form \[
\gamma\big(\alpha_3^{\omega_1} \alpha_2^{x_1} \alpha^{y_1} \alpha_1^{z_1},\alpha_3^{\omega_2} \alpha_2^{x_2} \alpha^{y_2} \alpha_1^{z_2}\big) = \lambda^{x_2 z_1+\frac{y_2 z_1\left(z_1-1\right)}{2}}\mu^{ y_1 x_2+\frac{z_1 y_2(y-1)}{2}+z_1 y_1 y_2}\tau_y^{y_1 \omega_2} \tau_z^{z_1 \omega_2},\] where $\lambda,\mu,\tau_y,\tau_z \in \C^\times$ such that $\lambda^p= \mu^p = \tau_y^p = \tau_z^p= 1.$ 
Let $\lambda= e^{\frac{2 \pi i j}{p}}, \mu= e^{\frac{2 \pi i k}{p}}, \tau_y= e^{\frac{2 \pi i l}{p}}, \tau_z= e^{\frac{2 \pi i m}{p}}$. First, we show that if $\left[\begin{array}{l}j \\ k\end{array}\right]$ and $\left[\begin{array}{l}m \\  l \end{array}\right]$ are linearly dependent over $\Z/p\Z$, then $Z(G_1)$ has a non-trivial $\gamma$-regular element. If $(j,k)=n(m,l)$ for some $n\in \Z/p\Z$, then it is easy to check that $\alpha_3^{-n} \alpha_2$ is a $\gamma$-regular element of $Z(G_1)$. If $(m,l)=(0,0)$, then $\alpha_3$ is a $\gamma$-regular element of $Z(G_1)$. Hence, for any such $\gamma$, all irreducible $\gamma$-representations of $G_1$ are of degree $p$, in view of \cite[Chapter 2, Lemma 5.1]{karpilovsky3}. This implies  
that, if $X= \{[\nu] \in \mathrm{H}^2(G_1,\C^\times) \mid  \irr^\nu(G_1) \text{ contains an element of degree } p^2\}$, then $|X| \leq (p^2-1)(p^2-p)$.
Now, using the exact sequence from Lemma \ref{exact sequence}
$$1 \to \mathrm{Hom}(Z(G), \mathbb{C}^{\times}) 
\xrightarrow{\mathrm{tra}} 
\mathrm{H}^2(G_1, \mathbb{C}^{\times}) 
\xrightarrow{\mathrm{inf}} \mathrm{H}^2(G, \mathbb{C}^{\times}),$$ 
we observe that there are $(p^3-1)$ non-trivial $[\alpha]$ in $\operatorname{Im}\operatorname{(inf)}$.
Therefore, for each such $\alpha$, there is a $\beta \in Z^2(G_1, \mathbb{C}^{\times})$ such that $$\mathrm{Irr}^{\alpha}(G) \longleftrightarrow \bigcup_{\chi \in \mathrm{Hom}(Z(G),\C^{\times})} \mathrm{Irr}^{\beta\mathrm{tra}(\chi)}(G_1),$$ by Remark \ref{remark for bijective correspondence via inflation}(1).
If, for every non-trivial $[\alpha] \in \operatorname{Im} \operatorname{(inf)}$, there exist at least $p-1$ distinct $\chi$ such that each $[\beta \mathrm{tra}(\chi)]\in X$, then $(p^3-1)(p-1)\leq |X|$, contradicting to the fact $|X| \leq (p^2-1)(p^2-p)$. 
Hence, there is an $\alpha \in \mathrm{Z}^2(G, \C^{\times})$ and $\chi_1, \chi_2 \in \mathrm{Hom}(Z(G), \mathbb{C}^{\times})$ with $\chi_1 \neq \chi_2$ such that
the elements in $\mathrm{Irr}^{\beta \mathrm{tra}(\chi_i)}\big(G_1\big)$ are of degree $p$ for  $i=1,2$.
Now, let $\tilde{\rho_i} \in \mathrm{Irr}^{\beta \mathrm{tra}(\chi_i)}\big(G_1\big)$ and $\rho_i\in \mathrm{Irr}^{\alpha}(G)$ correspond to $\tilde{\rho_i}$, via the correspondence in Remark \ref{remark for bijective correspondence via inflation}(2). Set $\rho= \rho_1 \oplus \rho_2$. By Theorem \ref{direct_sum}, $\rho$ is faithful. Hence, $\tau(G) \leq 2p$.  
Let $\psi$ be a faithful projective representation of $G$ of degree $\tau(G)$. Since $\delta(G) =p^2$, $\psi$ can not be an ordinary representation. Therefore, $\psi$ has to be a true projective representation and $\tau(G)= p$ or $2p$. By Remark \ref{some results on p-groups}$(iii)$, $\tau(G)> p$. Hence, the result follows. \\ 

$(xvi)$ Let $G = \Phi_8(32)$. By \cite[Theorem 3.1(4)(vi)]{kaur}, $\delta(G)= p^2$. Hence, the result follows from Lemma \ref{group with cyclic centre of order p}. \\

$(xvii)$ Let $G$ be one of the groups $\Phi_{9}(2111)a$ or $\Phi_{9}(2111)b_r$. Since $\delta(G)= p$ (by \cite[Theorem 3.1(4)(vii)]{kaur}), $\tau(G) \leq p+1$ due to Lemma \ref{relation}. The result is a consequence of Remark \ref{some results on p-groups}$(iii)$. \\

$(xviii)$ Let $G= \Phi_{9}(1^{5})$. Then, $B_{G}=\{H \mid |H|= p^6, H \in \Phi_{35}, \Phi_{36} \text{ or } \Phi_{37}\}$. If $H \in \Phi_{35}$ with $|H|= p^6$, then $\delta_{irr}(H)= p$ in view of Theorem \ref{embedding degree of a p group with cyclic centre}. Similarly, if $H \in \Phi_{36}$ or $\Phi_{37}$ with $|H|= p^6$, then $\delta_{irr}(H)= p^2$. Hence, $\tau_{irr}(G)= p$ due to Corollary \ref{Elementary abelian Schur multiplier}$(i)$. By Remark \ref{some results on p-groups}$(ii)$, $\tau(G)= p$. \\

$(xix)$ Suppose $G = \Phi_{10}(2111)a_r$ or $\Phi_{10}(2111)b_r$. 
As $M(G) \cong \Z/p\Z$, we have $A.G =G$ or $G^*$. Since $cd(G)=\{1,p,p^2\}$ (by \cite[Lemma 5.8]{Prajapati}), any irreducible projective representation of $G$ is of degree $1, p$ or $p^2$. Then Theorem \ref{embedding degree of a p group with cyclic centre} and Theorem \ref{bijective correspondence} yield that $\delta(G^*)= p^2$. Also, as $\delta(G)= p^2$, $\tau(G) \geq p^2$, due to Theorem \ref{projective embedding degrees}.  

Let $\rho = \oplus_{i=1}^l\rho_i$, where $\rho_i \in \mathrm{Irr}^\alpha(G)$. Let $\tilde\rho_i$ be $\beta\mathrm{tra}(\chi_i)$ representations of $G/Z(G)$ corresponding to $\rho_i$ via the correspondence given in Remark \ref{remark for bijective correspondence via inflation}. As $\rho$ is faithful, by Theorem \ref{direct_sum}, $\chi_i \neq \chi_j$ for $i \neq j$ and $\oplus_{i=1}^l\chi_i$ is faithful on $Z(G)$. Since $\chi_i \oplus \chi_j$ is faithful on $Z(G)$, we have $l=2$. If $\operatorname{deg}(\rho_i)<p^2$ for $i=1,2$, then $\operatorname{deg}(\rho) \leq 2p$ which is not possible.
Hence, $\operatorname{deg}(\rho) \geq p^2+p$ if $\alpha$ is a non-trivial cocycle and $\operatorname{deg}(\rho) \geq p^2+1$ if $\alpha=1$. Since $\delta(G)= p^2$, the result follows from Lemma \ref{relation}.  \\


$(xx)$ Let $G= \Phi_{10}(1^5)$. Then, $B_{G}=\{H \mid |H|= p^6, H \in \Phi_{38} \text{ or } \Phi_{39}\}$. By arguments similar to those in the proof of (xviii), we get $\tau_{irr}(G)=p^2$. 
Now, from the proof of \cite[Lemma 4.2]{hatuip5}, it follows that the group
\begin{eqnarray*}
	G^* &=&\langle \alpha, \alpha_j, \beta_i
	\mid [\alpha_i,\alpha] = \alpha_{i+1},\ 
	[\alpha_1,\alpha_2] = \alpha_4 \beta_3,\ 
	[\alpha_4,\alpha] = \beta_1,\ [\alpha_1,\alpha_4] = \\ 
	&& [\alpha_3,\alpha_2] = \beta_2,\  [\alpha_1,\alpha_3] = \beta_1 \beta_2,\ \alpha^p = \alpha_{j}^p = \beta_i^p = 1,\ 1 \le i \le 3,\ 1 \le j \le 4 \rangle
\end{eqnarray*} 
is a representation group of $G$ of order $p^8$ and $G^*/Z({G}^*) = G$, where $Z({G}^*) = \langle \beta_1, \beta_2, \beta_3 \rangle$.  \\
Consider the abelian normal subgroup
$N = \langle \beta_1, \beta_2, \beta_3, \alpha_3, \alpha_4 \rangle$ of ${G}^*$ of order $p^5$. 
Let $\chi' \in \operatorname{Irr}(Z({G}^*))$ such that $\chi'(\beta_1) = \xi^{r}$, $\chi'(\beta_2) = \xi^{s}$, $\chi'(\beta_3)= \xi^q$, where $\xi$ is a primitive $p$th root of unity and $0 \le r, s, q \le (p-1)$ such that $r$, $s$, $q$ are  not all $0$.
We use Clifford's theory here to study $\operatorname{Irr}(G^* \mid \chi')$. 
Let $\chi \in \operatorname{Irr}(N)$ such that
$\chi|_{Z({G}^*)} = \chi'$ and $\chi(\alpha_4) = \xi^t$ where $0 < t \le (p-1)$. Now $g \in {G}^*$ can be written as $g = \alpha^j \alpha_1^k \alpha_2^l n'$ for some $n' \in N$ and any element of $N$ is of the form $n_1 = \alpha_3^m \alpha_4^n z$ for some $z \in Z({G}^*)$.
Therefore,
\[
\chi^{\alpha^j \alpha_1^k \alpha_2^l}(\alpha_3^m \alpha_4^n  z)
= \chi(\alpha_3^m \alpha_4^n z)
\iff
\chi(\alpha^j \alpha_1^k \alpha_2^l \alpha_3^m \alpha_4^n \alpha_2^{-l} \alpha_1^{-k} \alpha^{-j})
= \chi(\alpha_3^m \alpha_4^n).\]
Upon simplification,  we obtain that
$\alpha^j \alpha_1^k \alpha_2^l n' \in I_{{G}^*}(\chi)$
if and only if
\[
\chi(\beta_1^{km-jn+m\binom{p-j}{2}} \beta_2^{-lm+kn+km} \alpha_4^{-jm}) = 1
\text{ i.e., } 
\xi^{\,r(km-jn+m\binom{p-j}{2})+s(-lm+kn+km)-tjm} = 1.
\] Let $\Gamma \in \operatorname{Irr}(G^* \mid \chi)$. Now the following cases occur.

\textbf{Case (a)} Suppose $s\neq 0$. Then $I_{{G}^*}(\chi) = \langle N, \alpha\alpha_1^{s^{-1}r}\alpha_2^{-s^{-1}t+(s^{-1}r)^2+2s^{-1}r} \rangle$ and $\deg(\Gamma)=p^2$.

\textbf{Case (b)} Suppose $r\neq 0$ and $s =0$. Then $I_{{G}^*}(\chi) = \langle N, \alpha_2 \rangle$ and $\deg(\Gamma)=p^2$. 

\textbf{Case (c)} Suppose $r = 0$, $s= 0$ and $t \neq 0$. Then $I_{{G}^*}(\chi) = \langle N, \alpha_1, \alpha_2 \rangle$ is of order $p^7$. Since $\chi$ extends to $N_1= \langle N, \alpha_1  \rangle$, let $\tilde{\chi}: N_1 \to \C^\times$ be such that $\tilde{\chi}|_N = \chi$. Now,  $\tilde{\chi}^{(\alpha_2)}(\alpha_1)= \tilde{\chi}(\alpha_1)\tilde{\chi}(\alpha_4 \beta_3)^{-1}$. Therefore, $\chi$ extends to $I_{{G}^*}(\chi)$ if and only if $\chi(\alpha_4)= \chi(\beta_3)^{-1}$. 
Hence, $\deg(\Gamma) =p$ or $p^2$, and $\deg(\Gamma) =p$ if and only if $\chi$ extends to $I_{{G}^*}(\chi)$.
If $\deg(\Gamma)= p$, choosing $l_i= \alpha_3^i$, $i=1,2,..,p$, as left coset representatives of $I_{{G}^*}(\chi)$ in $G^*$, we get $\Gamma(\alpha_4)= \xi^{-q} I_{p \times p}$.

Now, suppose $\rho$ is a faithful $\alpha$-representation.
If $\rho$ is an ordinary representation, then  $\deg(\rho) \geq \delta(G) =p^2$. Otherwise, we have $\alpha= \operatorname{tra}(\chi')$ with $\chi'\neq 1$.
Let $\rho =\oplus_{i=1}^l\rho_i$, where $\rho_i \in \irr^\alpha(G)$ such that $\deg(\rho_i)=p$. Then, by Theorem \ref{restriction}, $\rho|_{Z(G)}$ is faithful. Let $\Gamma_i$ correspond to $\rho_i$, where the correspondence is given by Theorem \ref{bijective correspondence}. If $\deg(\Gamma_i) = p$, then 
by the above discussion,
$\Gamma_i(\alpha_4)=\xi^{-q} I_{p \times p}$ and in this case, $\rho|_{Z(G)}$ cannot be faithful, and so $\rho$ is not faithful. It says that $\rho$ must be an irreducible $\alpha$-representation.
Thus, $\tg= \ti =p^2$.

\end{proof}

We summarize our results on $\tau(G)$ and $\tau_{irr}(G)$, given in this section, in the following tables. If $\tau_{irr}(G)$ does not exist for a group $G$, then we denote this by writing $\tau_{irr}(G)=$ -.

\begin{table}[h!]
\centering
\small
\setlength{\extrarowheight}{2pt}
\begin{tabular}{|c|c|c|}
	\hline
	$G$ & $\tau(G)$ & $\tau_{irr}(G)$ \\ 
	\hline
	$\Phi_2(21)$ & $p+1$ & - \\ 
	$\Phi_2(1^3)$ & $p$ & $p$ \\ \
	$Q_8$ & 3 & - \\ 
	$D_8$ & 2 & 2 \\ 
	\hline
\end{tabular}
\caption{$\tau(G)$ and $\tau_{irr}(G)$ for non-abelian groups of order $p^3$}
\end{table}

\begin{table}[h!]
\centering
\small
\begin{tabular}{|c|c|c|c|}
	\hline
	Group ID & Group & $\tau(G)$ & $\tau_{irr}(G)$ \\
	\hline
	3  & $(\mathbb{Z}/2\mathbb{Z})^{2} \rtimes \mathbb{Z}/4\mathbb{Z}$ & 4 & 4 \\
	4  & $(\mathbb{Z}/4\mathbb{Z}) \rtimes (\mathbb{Z}/4\mathbb{Z})$ & 4 & - \\
	6  & $\mathbb{Z}/8\mathbb{Z} \rtimes_{5} \mathbb{Z}/2\mathbb{Z}$ & 3 & - \\
	7  & $D_{16}$ & 2 & 2 \\
	8  & $\mathbb{Z}/8\mathbb{Z} \rtimes_{3} \mathbb{Z}/2\mathbb{Z}$ & 3 & - \\
	9  & $Q_{16}$ & 2 & - \\
	11 & $D_{8} \times \mathbb{Z}/2\mathbb{Z}$ & 4 & 4 \\
	12 & $Q_{8} \times \mathbb{Z}/2\mathbb{Z}$ & 4 & - \\
	13 & $Q_{8} \rtimes \mathbb{Z}/2\mathbb{Z}$ & 3 & - \\
	\hline
\end{tabular}
\caption{$\tau(G)$ and $\tau_{irr}(G)$ for non-abelian groups of order $2^4$}
\end{table}

\begin{table}[!hbt]
\centering
\small
\begin{tabular}{|c|c|c|}
	\hline
	Group & $\tau(G)$ & $\tau_{irr}(G)$ \\
	\hline
	$\Phi_{2}(211)a$ & $p+2$ & - \\
	$\Phi_{2}(1^{4})$ & $p+2$ & $p^2$ \\
	$\Phi_{2}(31)$ & $p+1$ & - \\
	$\Phi_{2}(22)$ & $p+2$ & $p^2$ \\
	$\Phi_{2}(211)b$ & $p+1$ & -\\
	$\Phi_{2}(211)c$ & $p+2$ & - \\
	$\Phi_{3}(211)a$ & $p+1$ & -\\
	$\Phi_{3}(211)b_{r}$ & $p+1$ & -\\
	$\Phi_{3}(1^{4})$ & $p$ & $p$ \\
	\hline
\end{tabular}
\caption{$\tau(G)$ and $\tau_{irr}(G)$ for non-abelian groups of order $p^4$, $p \geq 3$} 
\end{table}

\begin{table}[H]
\centering
\small
\begin{tabular}{|c|c|c||c|c|c|}
\hline
$G$ & $\tau(G)$ & $\tau_{irr}(G)$ & $G$ & $\tau(G)$ & $\tau_{irr}(G)$  \\ 
\hline
$\Phi_2(311)a$ & $p+2$ & - & $\Phi_4(221)e$ & $2p$ & - \\ 
$\Phi_2(221)a$ & $p+3$ & - & $\Phi_4(221)f_0$ & $2p$ & - \\ 
$\Phi_2(221)b$ & $p+2$ & - & $\Phi_4(221)f_r$ & $2p$ & - \\ 
$\Phi_2(2111)a$ & $p+3$ & - & $\Phi_4(2111)a$ & $2p$ & - \\ 
$\Phi_2(2111)b$ & $p+2$ & - & $\Phi_4(2111)b$ & $2p$ & - \\ 
$\Phi_2(2111)c$ & $p+3$ & - & $\Phi_4(2111)c$ & $2p$ & - \\ 
$\Phi_2(2111)d$ & $p+2$ & - & $\Phi_4(1^5)$ & $2p$ & $p^2$ \\ 
$\Phi_2(1^5)$ & $p+3$ & $p^2$ & $\Phi_5(2111)$ & $2p$ & - \\ 
$\Phi_2(41)$ & $p+1$ & - & $\Phi_5(1^5)$ & $2p$ & - \\ 
$\Phi_2(32)a_1$ & $p+2$ & - & $\Phi_6(221)a$ & $2p+1$ & - \\ 
$\Phi_2(32)a_2$ & $p+2$ & - & $\Phi_6(221)b_r, \, r \neq \tfrac{1}{2}(p-1)$ & $2p+1$ & - \\ 
$\Phi_2(311)b$ & $p+1$ & - & $\Phi_6(221)b_{\tfrac{1}{2}(p-1)}$ & $2p+1$ & $p^2$ \\ 
$\Phi_2(311)c$ & $p+2$ & - & $\Phi_6(221)c_r$ & $2p+1$ & - \\ 
$\Phi_2(221)c$ & $p+2$ & $p^2$ & $\Phi_6(221)d_0$ & $2p+1$ & $p^2$ \\ 
$\Phi_2(221)d$ & $p+3$ & - & $\Phi_6(221)d_r$ & $2p+1$ & - \\ 
$\Phi_3(2111)a$ & $p+2$ & - & $\Phi_6(2111)a$ & $2p+1$ & - \\ 
$\Phi_3(2111)b_r$ & $p+2$ & - & $\Phi_6(2111)b_1$ & $2p+1$ & - \\ 
$\Phi_3(1^5)$ & $p+2$ & $p^2$ & $\Phi_6(2111)b_\nu$ & $2p+1$ & - \\ 
$\Phi_3(311)a$ & $p+1$ & - &  $\Phi_6(1^5)$ & $2p$  & $p^2$ \\ 
$\Phi_3(311)b_r$ & $p+1$ & - & $\Phi_7(2111)a$ & $2p$ & - \\ 
$\Phi_3(221)a$ & $p+2$ & - & $\Phi_7(2111)b_r$ & $2p$ & - \\ 
$\Phi_3(221)b_r$ & $p+2$ & $p^2$ & $\Phi_7(2111)c$ & $2p$ & - \\ 
$\Phi_3(2111)c$ & $p+1$ & - & $\Phi_7(1^5)$ & $2p$ & $p^2$ \\ 
$\Phi_3(2111)d$ & $p+2$ & - & $\Phi_8(32)$ & $p^2+1$ & - \\ 
$\Phi_3(2111)e$ & $p+2$ & - & $\Phi_9(2111)a$ & $p+1$ & - \\ 
$\Phi_4(221)a$ & $2p$ & - & $\Phi_9(2111)b_r$ & $p+1$ & - \\ 
$\Phi_4(221)b$ & $2p$ & $p^2$ & $\Phi_9(1^5)$ & $p$ & $p$ \\ 
$\Phi_4(221)c$ & $2p$ & - & $\Phi_{10}(2111)a_r$ & $p^2+1$ & - \\ 
$\Phi_4(221)d_r, \, r \neq \tfrac{1}{2}(p-1)$ & $2p$ & - & $\Phi_{10}(2111)b_r$ & $p^2+1$ & -  \\ 
$\Phi_4(221)d_{\tfrac{1}{2}(p-1)}$ & $2p$ & - &  $\Phi_{10}(1^5)$ & $p^2$ & $p^2$ \\ 
\hline
\end{tabular}
\caption{$\tau(G)$ and $\tau_{irr}(G)$ for non-abelian groups of order $p^5$, $p \geq 5$.}
\end{table}

\section*{Acknowledgements}
The authors acknowledge the support of the National Institute of Science Education and Research (NISER), Bhubaneswar and Homi Bhabha National Institute (HBNI), Mumbai.



\bibliographystyle{plain}
\bibliography{ref}

\end{document}